\newtheorem{thm}{Theorem}[section]
\newtheorem{cor}[thm]{Corollary}
\newtheorem{lem}[thm]{Lemma}
\newtheorem{prop}[thm]{Proposition}
\theoremstyle{definition}
\newtheorem{defn}[thm]{Definition}
\theoremstyle{remark}
\newtheorem{rem}[thm]{Remark}
\numberwithin{equation}{section}
\newcommand{\R}{\mathbb R}
\newcommand{\be}{\begin{equation}}
\newcommand{\ee}{\end{equation}}
\newcommand{\eps}{\varepsilon}
\newcommand{\p}{\partial}
\newcommand{\comment}[1]{}
\begin{document}
\title[ ]{Regularity of the free boundary for two-phase problems governed by
divergence form equations and applications}
\author{Daniela De Silva}
\address{Department of Mathematics, Barnard College, Columbia University,
New York, NY 10027}
\email{\texttt{desilva@math.columbia.edu}}
\author{Fausto Ferrari}
\address{Dipartimento di Matematica dell' Universit\`a, Piazza di Porta S.
Donato, 5, 40126 Bologna, Italy.}
\email{\texttt{fausto.ferrari@unibo.it}}
\author{Sandro Salsa}
\address{Dipartimento di Matematica del Politecnico, Piazza Leonardo da
Vinci, 32, 20133 Milano, Italy.}
\email{\texttt{sandro.salsa@polimi.it }}
\thanks{D.D. is partially supported by NSF grant DMS-1301535. F.~ F.~ is
supported by the ERC starting grant project 2011 EPSILON (Elliptic PDEs and
Symmetry of Interfaces and Layers for Odd Nonlinearities) 277749 and by RFO
grant, Universit\`a di Bologna, Italy. }

\begin{abstract}
We study a class of two-phase inhomogeneous free boundary problems governed
by elliptic equations in divergence form. In particular we prove that
Lipschitz or flat free boundaries are $C^{1,\gamma}$. Our results apply to
the classical Prandtl-Bachelor model in fluiddynamics.
\end{abstract}

\maketitle



\section{Introduction and Statements of the Main Theorems}

This paper is a further step in the development of the theory for general
elliptic inhomogeneous two-phase free boundary problems, after \cite{DFS1}, 
\cite{DFS2}, \cite{DFS5}. In particular, in \cite{DFS5}, via Perron's
method, we constructed a Lipschitz viscosity solution to problems governed
by elliptic equations in divergence form with H\"older continuous
coefficients and we proved weak measure theoretical regularity properties,
such as ``flatness" of the free boundary in a neighborhood of each point of
its reduced part. Here, as in \cite{DFS1, DFS2} we prove that flat or
Lipschitz free boundaries are locally $C^{1,\gamma }$. It is worthwhile to
notice that, in absence of distributed sources and with Lipschitz
coefficients, these regularity results were obtained in \cite{FS1}, \cite%
{FS2}, while they are new even in the homogeneous case when the coefficients
are assumed to be merely H\"older continuous.\smallskip

Our setting is the following. Let $\Omega $ be a bounded Lipschitz domain in 
$\mathbb{R}^{n}$ and let $A=\{a_{ij}(x)\}_{1\leq i,j\leq n}$ be a symmetric matrix with H\"{o}%
lder continuous coefficients in $\Omega ,$ $A\in C^{0,\bar{\gamma}}(\Omega )$%
, which is uniformly elliptic, i.e. 
\begin{equation*}
\lambda \mid \xi \mid ^{2}\leq \sum_{i,j=1}^{n}a_{ij}(x)\xi _{i}\xi _{j}\leq
\Lambda \mid \xi \mid ^{2},\quad \forall x\in \Omega ,\quad \xi \in \mathbb{R%
}^{n}
\end{equation*}%
for some $0<\lambda \leq \Lambda .$ Denote by 
\begin{equation*}
\mathcal{L}:=\text{div}(A(x)\nabla \cdot ).
\end{equation*}%
Let $f\in L^{\infty }(\Omega )$. We consider the two-phase inhomogeneous
free boundary problem 
\begin{equation}
\left\{ 
\begin{array}{ll}
\mathcal{L}u=f & \text{ \ in }\Omega ^{+}(u)=\{u>0\} \\ 
\mathcal{L}u=f & \text{ \ in }\Omega ^{-}(u)=\{u\leq 0\}^{\circ } \\ 
|\nabla _{A}u^{+}|^{2}-|\nabla _{A}u^{-}|^{2}=1 & \text{ \ on }F(u)=\partial
\{u>0\}\cap \Omega ,%
\end{array}%
\right.  \label{FBintro}
\end{equation}%
where $|\nabla _{A}u|^{2}:=\langle A\nabla u,\nabla u\rangle $.

Since our emphasis is on the class of operators, we decided to avoid further
technicalities by considering only a particular, although significant, free
boundary condition. The extension to a general free boundary condition of
the type $|\nabla u^{+}|$ $=G(|\nabla u^{-}|,\nu ,x),$ where $\nu =\nu (x)$
denotes the unit normal to $F(u)$ at $x$ pointing towards $\Omega ^{+}(u),$
can be achieved without much difficulty as in \cite{DFS1}, if $G(\beta
,x,\nu )$ is strictly increasing in $\beta $, Lipschitz continuous in the
first and in the third argument, H\"{o}lder continuous in the second
argument, $G(0):=\inf_{x\in \Omega ,\left\vert \nu \right\vert =1}G(0,x,\nu
)>0,$ and moreover $\eta ^{-N}G(\eta ,x,\nu )$ is strictly decreasing in $%
\eta$ uniformly in $x, \nu $.

We now recall the notion of viscosity solution. Here we give it in terms of
test functions. In the last section we will use an equivalent notion in
terms of asymptotic developments at one side regular points of the free
boundary.

\begin{defn}
Given $u, \varphi \in C(\Omega)$, we say that $\varphi$ touches $u$ by below
(resp. above) at $x_0 \in \Omega$ if $u(x_0)= \varphi(x_0),$ and 
\begin{equation*}
u(x) \geq \varphi(x) \quad (\text{resp. $u(x) \leq \varphi(x)$}) \quad \text{%
in a neighborhood $O$ of $x_0$.}
\end{equation*}
If this inequality is strict in $O \setminus \{x_0\}$, we say that $\varphi$
touches $u$ strictly by below (resp. above).
\end{defn}

\begin{defn}
\label{defnhsol} Let $u$ be a continuous function in $\Omega$. We say that $%
u $ is a viscosity solution to (\ref{FBintro}) in $\Omega$, if the following
conditions are satisfied:

\begin{enumerate}
\item $\mathcal{L}u = f$ in $\Omega^+(u) \cup \Omega^-(u)$ in the weak sense;

\item Let $x_0 \in F(u)$ and $v \in C^{1, \bar \gamma}(\overline{B^+(v)}) \cap
C^{1, \bar \gamma}(\overline{B^-(v)})$ ($B=B_\delta(x_0)$) with $F(v) \in C^{2}$.
If $v$ touches $u$ by below (resp.above) at $x_0 \in F(v)$, then 
\begin{equation*}
|\nabla_A v^+(x_0)|^2 - |\nabla_A v^-(x_0)|^2 \leq 1 \quad (\text{resp. $%
\geq $)}.
\end{equation*}
\end{enumerate}
\end{defn}

We also need the definition of comparison subsolution (resp. supersolution).

\begin{defn}
\label{defsubcv} We say that $v \in C(\Omega)$ is a $C^{1,\bar{\gamma}}$ strict
(comparison) subsolution (resp. supersolution) to (\ref{FBintro}) in $\Omega$%
, if $v\in C^{1,\bar \gamma}(\overline{\Omega^+(v)}) \cap C^{1,\bar \gamma}(\overline{%
\Omega^-(v)})$, $F(v) \in C^2$, and the following conditions are satisfied:

\begin{enumerate}
\item $\mathcal{L}v> f $ (resp. $< f $) in $\Omega^+(v) \cup \Omega^-(v)$ in
the weak sense;

\item If $x_0 \in F(v)$, then 
\begin{equation*}
|\nabla_A v^+(x_0)|^2 - |\nabla_A v^-(x_0)|^2 > 1\quad (\text{resp. $%
|\nabla_A v^+(x_0)|^2 - |\nabla_A v^-(x_0)|^2 < 1$.)}
\end{equation*}
\end{enumerate}
\end{defn}

We notice that, using the almost monotonicity formula in \cite{MP}, one can
reproduce the proof of Theorem 4.5 in \cite{CJK} to prove that viscosity
solutions to \eqref{FBintro} are locally Lipschitz continuous.

Our main Theorem is a ``flatness implies regularity" result. Here, a
constant depending (possibly) on $n,Lip(u),\lambda ,\Lambda ,[a_{ij}]_{C^{0,%
\bar{\gamma}}},\Vert f\Vert _{L^{\infty }},$ is called universal.

\begin{thm}[Flatness implies $C^{1,\protect\gamma }$]
\label{flatmain2} Let $u$ be a  viscosity solution to $(\ref{FBintro}%
)$ in $B_{1}$. There exists a universal constant $\bar{\delta}>0$ such that, if 
\begin{equation*}
\{x_{n}\leq -\delta \}\subset B_{1}\cap \{u^{+}(x)=0\}\subset \{x_{n}\leq
\delta \},
\end{equation*}%
with $0\leq \delta \leq \bar{\delta},$ then $F(u)$ is $C^{1,\gamma }$ in $%
B_{1/2}$ for some universal $\gamma\in (0,1)$.
\end{thm}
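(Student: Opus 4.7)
The proof is via the viscosity/improvement-of-flatness scheme introduced in \cite{DFS1, DFS2}, adapted to the divergence form operator $\mathcal{L}$ with H\"older coefficients. I would first rescale around an arbitrary free boundary point $x_{0}\in F(u)$: setting $u_{r}(x)=u(x_{0}+rx)/r$, the rescaled operator is $\mathcal{L}_{r}=\text{div}(A_{r}(x)\nabla\cdot)$ with $A_{r}(x)=A(x_{0}+rx)$ whose $C^{0,\bar{\gamma}}$ oscillation on $B_{1}$ is of order $r^{\bar{\gamma}}$, while the right-hand side becomes $f_{r}(x)=rf(x_{0}+rx)$. For $r$ universally small, all perturbative data $r^{\bar{\gamma}}[A]_{C^{0,\bar{\gamma}}}$ and $\Vert f_{r}\Vert_{L^{\infty}}$ can be driven below $\epsilon^{2}$, so the task reduces to proving a single \emph{improvement of flatness} lemma in $B_{1}$.

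That lemma would read: there exist universal $\bar{\epsilon}, r_{0}\in(0,1)$ such that if $u$ is a viscosity solution in $B_{1}$ with
\begin{equation*}
\{x_{n}\leq -\epsilon\}\subset B_{1}\cap\{u^{+}=0\}\subset\{x_{n}\leq\epsilon\},\qquad \epsilon\leq\bar{\epsilon},
\end{equation*}
and the perturbative data are bounded by $\epsilon^{2}$, then there exists a unit vector $\nu_{1}$ with $|\nu_{1}-e_{n}|\leq C\epsilon$ such that in $B_{r_{0}}$ the free boundary is trapped in a strip of width $r_{0}\epsilon/2$ along $\nu_{1}$. Iterating on dyadic scales $r_{0}^{k}$ produces a Cauchy sequence of normals with geometric rate, so the outward normal to $F(u)$ is H\"older continuous and $F(u)\in C^{1,\gamma}$ in $B_{1/2}$.

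I would establish the improvement lemma by contradiction and compactness. Given a sequence $u_{k}$ of $\epsilon_{k}$-flat solutions violating the conclusion with $\epsilon_{k}\downarrow 0$, introduce the normalized graphs
\begin{equation*}
\tilde{u}_{k}^{\pm}(x)=\frac{u_{k}(x)-U_{0}^{\pm}(x)}{\epsilon_{k}},
\end{equation*}
where $U_{0}$ is the two-plane function with slopes $\alpha_{\pm}$ satisfying $|\nabla_{A(0)}U_{0}^{+}|^{2}-|\nabla_{A(0)}U_{0}^{-}|^{2}=1$ at the frozen matrix $A(0)$. The core preliminary estimate is a \emph{Harnack inequality for the free boundary} in the spirit of \cite{DFS1, DFS2}: if $u$ is trapped between two translates of $U_{0}$ at $\epsilon$-distance in $B_{1}$, the gap improves by a definite amount in $B_{1/2}$. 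This provides equicontinuity of the $\tilde{u}_{k}^{\pm}$ and lets one extract a uniform limit $\tilde{u}$ solving a linear transmission problem: $\mathcal{L}_{0}\tilde{u}=0$ on both sides of $\{x_{n}=0\}$ with $\mathcal{L}_{0}=\text{div}(A(0)\nabla\cdot)$, together with a linear jump condition across $\{x_{n}=0\}$ obtained by linearizing $|\nabla_{A}v^{+}|^{2}-|\nabla_{A}v^{-}|^{2}=1$ at $v=U_{0}$. A suitable linear change of variables reduces this to the standard transmission problem for the Laplacian across a hyperplane, which is known to be $C^{1,\gamma}$ at the interface. Lifting the affine approximation of $\tilde{u}$ at the origin back to $u_{k}$ contradicts the assumed failure of flatness improvement.

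The principal technical obstacle is the free-boundary Harnack step in the divergence setting. The arguments of \cite{DFS1, DFS2} are carried out for non-divergence operators, exploiting pointwise barriers built from the two-plane solutions; here each barrier must instead be realized as a weak sub- or supersolution of $\mathcal{L}v=f$, and the residual terms stemming from $(a_{ij}(x)-a_{ij}(0))\partial_{ij}U_{0}$ together with the inhomogeneity $f$ must be shown to sit strictly below the $\epsilon$-threshold. This is precisely where $[A]_{C^{0,\bar{\gamma}}}+\Vert f\Vert_{L^{\infty}}\leq\epsilon^{2}$ plays its role: combined with the boundary Harnack principle for $\mathcal{L}$ in $C^{1,\alpha}$ domains approximating $\{u>0\}$, it forces those errors to be $o(\epsilon)$. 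Once this Harnack step is in place, the compactness--linearization argument proceeds along the blueprint of \cite{DFS1, DFS2} with only notational changes.
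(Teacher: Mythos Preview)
Your outline captures the high-level compactness--linearization strategy correctly, but it omits two structural ingredients that the paper treats as essential, and the sketch of the Harnack step misidentifies the relevant tool.

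\textbf{Missing reduction and the two-regime split.} The hypothesis of the theorem only says the free boundary is $\delta$-flat; it does not say $u$ is close to a two-plane solution. The paper first proves a non-degeneracy lemma (Lemma~\ref{deltand}) to show $u^{+}$ grows linearly away from $F(u)$, and then uses compactness to reduce to the normalized Theorem~\ref{main_new}, in which $u$ is $\bar\varepsilon$-close to some $U_{\beta}$ with $0\le\beta\le L$. More importantly, the subsequent analysis bifurcates according to whether $\beta$ stays bounded away from $0$ (non-degenerate case, Section~3) or is essentially $0$ (degenerate case, Section~4). These two regimes require \emph{different} Harnack inequalities and improvement-of-flatness lemmas: in the non-degenerate case the smallness assumption is $\|f\|_{\infty}\le\varepsilon^{2}\beta$, which becomes vacuous as $\beta\to 0$, while the degenerate case controls $u^{-}$ directly by $\varepsilon^{2}$ and linearizes to a Neumann problem rather than a transmission problem. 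A dichotomy lemma (Lemma~\ref{finalcase}) governs the passage between the two regimes during the iteration. Your single improvement-of-flatness statement, with one fixed $U_{0}$, cannot be iterated uniformly because the negative slope may drift toward zero along the sequence of scales.

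\textbf{The barrier construction.} In the Harnack step the paper does not invoke a boundary Harnack principle in approximating $C^{1,\alpha}$ domains. Instead it builds explicit comparison functions $v_{t}=U_{\beta}(x_{n}-\varepsilon c_{0}\bar w+t\varepsilon)$ from a radial barrier $w$ and then \emph{corrects} them by solving an auxiliary Dirichlet problem $\mathcal{L}\phi_{t}^{\pm}=-\mathrm{div}((A-I)\nabla v_{t})$ on each phase; the $C^{1,\bar\gamma}$ and $L^{\infty}$ estimates for $\phi_{t}^{\pm}$ absorb the divergence-form error and yield a strict subsolution $\psi_{t}=v_{t}+\phi_{t}$. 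This correction-by-auxiliary-PDE is the divergence-form substitute for the pointwise barriers of \cite{DFS1,DFS2}; boundary Harnack is not the mechanism here. Finally, after the normalization $\|A-I\|\le\varepsilon^{2}$ the limiting linear problem is for the Laplacian, not for $\mathcal{L}_{0}=\mathrm{div}(A(0)\nabla\cdot)$, though of course the two are equivalent after a linear change of coordinates.
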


The strategy to prove Theorem \ref{flatmain2} follows the lines of our work 
\cite{DFS1}. The key tools are a Harnack type inequality and an improvement
of flatness lemma which allow to linearize the problem into a standard
transmission problem.

Next, a ``Lipschitz implies regularity" result.

\begin{thm}[Lipschitz implies $C^{1,\protect\gamma}$]
\label{Lipmainvar} Let $u$ be a viscosity solution to $(\ref%
{FBintro})$ in $B_1$. If $F(u)$ is Lipschitz in $B_1$, then $F(u)$ is $C^{1,\gamma}$ in $B_{1/2}$ for some universal $\gamma\in (0,1)$.
\end{thm}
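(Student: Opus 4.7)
The strategy is to reduce Theorem \ref{Lipmainvar} to Theorem \ref{flatmain2} by showing that a Lipschitz free boundary is automatically $\bar{\delta}$-flat at some small scale around every free boundary point. Fix $x_0 \in F(u) \cap B_{1/2}$ and, after translating, take $x_0 = 0$. I would consider the rescalings
\begin{equation*}
u_r(x) := \frac{u(rx)}{r}, \qquad r \to 0^+,
\end{equation*}
which, by a direct computation, are viscosity solutions to a problem of the form (\ref{FBintro}) on $B_{1/r}$ with matrix $A_r(x) = A(rx)$ (still uniformly elliptic, with $[A_r]_{C^{0,\bar{\gamma}}} = r^{\bar{\gamma}}[A]_{C^{0,\bar{\gamma}}}$) and right-hand side $r\,f(rx)$ (with sup-norm $\leq r\|f\|_\infty$). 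Using the local Lipschitz bound for viscosity solutions recalled just before Theorem \ref{flatmain2}, the family $\{u_r\}$ is equi-Lipschitz, hence along a subsequence $u_r \to u_0$ locally uniformly. Since $F(u)$ is a Lipschitz graph through the origin, the rescaled free boundaries $F(u_r)$ lie in a fixed Lipschitz cone $\mathcal{C}$ with the same Lipschitz constant, and $F(u_0) \subset \mathcal{C}$ is itself a Lipschitz graph by Hausdorff convergence.

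The limit $u_0$ is a global viscosity solution of the homogeneous, constant-coefficient two-phase problem
\begin{equation*}
\operatorname{div}(A(0)\nabla u_0) = 0 \text{ in } \Omega^{\pm}(u_0), \qquad |\nabla_{A(0)} u_0^+|^2 - |\nabla_{A(0)} u_0^-|^2 = 1 \text{ on } F(u_0).
\end{equation*}
After a linear change of variables reducing $A(0)$ to the identity, the main step is a Liouville-type classification: any global viscosity solution of the classical Alt--Caffarelli--Friedman two-phase problem whose free boundary is a Lipschitz graph must be a two-plane solution
\begin{equation*}
U_{\alpha,\beta,\nu}(x) = \alpha\,\langle x,\nu\rangle^+ - \beta\,\langle x,\nu\rangle^-, \qquad \alpha^2 - \beta^2 = 1,
\end{equation*}
so that $F(u_0)$ is a hyperplane. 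I expect this classification to be the main technical obstacle: it rests on non-degeneracy of $u$ on both sides (which follows from the free boundary condition combined with Lipschitz regularity) together with the ACF-type almost-monotonicity formula of \cite{MP} applied to $u_0$ (or to $u$ at scale $r$ and then passed to the limit) to rigidify the Lipschitz cone into a single-direction hyperplane, paralleling the analogous steps in \cite{DFS1, DFS2}.

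Once $u_0$ is a two-plane solution with hyperplane free boundary along some $\nu$, Hausdorff convergence $F(u_r) \to F(u_0)$ yields, for any prescribed $\bar{\delta}>0$, some $r = r(\bar{\delta}) > 0$ with
\begin{equation*}
\{\langle x,\nu\rangle \leq -\bar{\delta}\} \cap B_1 \subset B_1 \cap \{u_r^+ = 0\} \subset \{\langle x,\nu\rangle \leq \bar{\delta}\} \cap B_1.
\end{equation*}
After rotating so that $\nu = -e_n$, and noting that the H\"older norm of $A_r$ and the $L^\infty$ norm of its right-hand side are controlled by the original universal class, Theorem \ref{flatmain2} applies to $u_r$ and gives that $F(u_r)$, hence $F(u)$ in a neighborhood of $x_0$, is $C^{1,\gamma}$. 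Since $x_0 \in F(u) \cap B_{1/2}$ was arbitrary, this yields Theorem \ref{Lipmainvar}; the routine ingredients are the compactness of the blow-up family and the stability of the universal class under rescaling, while the genuine difficulty is the global classification of Lipschitz-cone solutions.
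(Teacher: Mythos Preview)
Your overall blow-up strategy is correct and matches the paper's route (which in turn follows \cite{DFS1}, Section~6.2): rescale, pass to a global solution of the homogeneous constant-coefficient problem, classify that limit as a two-plane solution, and then invoke Theorem~\ref{flatmain2}. The genuine gap is in your sketch of the classification step.

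First, your claim that ``non-degeneracy of $u$ on both sides follows from the free boundary condition combined with Lipschitz regularity'' is not correct. The free boundary condition forces only $u^+$ to be non-degenerate; the negative phase may well degenerate, and indeed the paper devotes Section~4 precisely to the case where $u^-$ is negligible and the blow-up can have $u_0^- \equiv 0$. Second, the ACF (or \cite{MP}) monotonicity formula does not by itself ``rigidify the Lipschitz cone into a single-direction hyperplane.'' What it yields is the dichotomy in Remark~\ref{ACF}: either the blow-up is a genuine two-plane solution $U_\beta$ with $\beta>0$, or $u_0^-\equiv 0$. In the second alternative you are left with a global one-phase solution with Lipschitz free boundary, and you still have to prove that its free boundary is a hyperplane. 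The paper handles this (Section~5) by first establishing a Weiss-type monotonicity formula so that blow-ups are homogeneous of degree one, and then a dimension-reduction argument (Lemma~\ref{dimred} and Proposition~\ref{Lip_trivial}) showing that all one-phase Lipschitz cones are trivial. Alternatively one can cite Caffarelli's theorem \cite{C1} as a black box, which is how \cite{DFS1} proceeds and which the paper explicitly mentions as the standard route. Your proposal needs one of these two ingredients to close the argument.
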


Theorem \ref{Lipmainvar} follows from our flatness result via a blow-up
argument and the regularity result in \cite{C1} for the homogeneous problem
when $A\equiv I$.

Actually, exploiting the variational nature of the free boundary condition,
we can use a Weiss type monotonicity formula \cite{W}, which together with
the monotonicity formula in \cite{ACF} provides a new proof of the
regularity result for the homogeneous problem and the Laplace operator. A
similar strategy has been used in \cite{DS}.

We remark however, that for general free boundary conditions one has to rely
on the result in \cite{C1}.\smallskip

A consequence of our flatness theorem is a regularity result for the
minimal Perron solution $u,$ constructed in \cite{DFS5}. We recall that in 
\cite{DFS5} we prove that $u$ is Lipschitz continuous with non-degenerate
positive part and the free boundary $F(u)$ has finite $(n-1)$ dimensional
Haursdorff measure. Moreover, for $c,r_{0}$ universal, $r<r_{0}$, we have 
\begin{equation*}
\mathcal{H}^{n-1}(F(u)\cap B_{r}(x))\leq cr^{n-1},\text{ for all }x\in F(u),
\end{equation*}%
and, denoting by $F^{\ast }(u)$ the reduced free boundary, 
\begin{equation*}
\mathcal{H}^{n-1}(F^{\ast }(u)\cap B_{r}(x))\geq cr^{n-1},\text{ }\mathcal{H}%
^{n-1}(F(u)\setminus F^{\ast }(u))=0.
\end{equation*}%
From Theorem \ref{flatmain2} we deduce the following result.

\begin{thm}
\label{precompactenss_th} Let $u$ be the Perron solution. In a neighborhood
of every $x_{0}\in F^{\ast }(u),$ $F(u)$ is a $C^{1,\gamma }$ surface.
\end{thm}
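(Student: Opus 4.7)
The plan is to apply Theorem~\ref{flatmain2} at a sufficiently small scale around $x_0$, after verifying its flatness hypothesis via the measure-theoretic information available at a reduced free boundary point.

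Fix $x_0 \in F^{\ast}(u)$ and translate so that $x_0 = 0$. For $r \in (0,1]$ consider the rescaling
\begin{equation*}
u_r(x) := \frac{u(rx)}{r}, \qquad x \in B_1.
\end{equation*}
A direct computation shows that $u_r$ is a viscosity solution in $B_1$ to a problem of the same form as \eqref{FBintro}, with coefficient matrix $A_r(x) = A(rx)$, source $f_r(x) = r f(rx)$, and unchanged transmission condition. The relevant parameters rescale favorably: the ellipticity constants are unchanged, $[A_r]_{C^{0,\bar{\gamma}}} \leq r^{\bar{\gamma}}[A]_{C^{0,\bar{\gamma}}}$, $\|f_r\|_{L^{\infty}} \leq r\|f\|_{L^{\infty}}$, and $\operatorname{Lip}(u_r) = \operatorname{Lip}(u)$. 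Consequently, the threshold $\bar{\delta}$ furnished by Theorem~\ref{flatmain2} may be chosen uniformly in $r \in (0,1]$.

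The decisive step is to verify, for sufficiently small $r$, the flatness hypothesis of Theorem~\ref{flatmain2} applied to $u_r$. Since $0 \in F^{\ast}(u)$, the set $\{u>0\}$ admits a measure-theoretic outer unit normal $\nu$ at $0$; rotating coordinates, we may assume $\nu = -e_n$. By the De Giorgi structure theorem,
\begin{equation*}
\bigl| \bigl(\{u_r > 0\} \triangle \{x_n > 0\}\bigr) \cap B_1\bigr| \longrightarrow 0 \quad \text{as } r \to 0^{+}.
\end{equation*}
I would then upgrade this $L^1$-convergence of phases to the uniform two-sided inclusions
\begin{equation*}
\{x_n \leq -\eps(r)\} \cap B_1 \subset \{u_r^{+} = 0\} \cap B_1 \subset \{x_n \leq \eps(r)\} \cap B_1,
\end{equation*}
with $\eps(r) \to 0^+$. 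Both inclusions follow from the Lipschitz bound and the non-degeneracy of $u^{+}$ recalled from \cite{DFS5} (which are preserved under the rescaling). Indeed, if $F(u_r)$ contained a point $y$ with $y_n \leq -\eps(r)$, non-degeneracy applied on the ball $B_{\eps(r)/4}(y) \subset \{x_n < 0\}$, combined with the Lipschitz estimate, would force $\{u_r > 0\}$ to contain a subball of measure $\gtrsim \eps(r)^n$ lying inside $\{x_n < 0\}$, contradicting the $L^1$-convergence once $r$ is small enough. A symmetric argument using Lipschitz continuity rules out points of $\{u_r^{+} = 0\}$ in $\{x_n \geq \eps(r)\} \cap B_1$.

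Once $\eps(r) \leq \bar{\delta}$, Theorem~\ref{flatmain2} applied to $u_r$ in $B_1$ yields $F(u_r) \in C^{1,\gamma}$ in $B_{1/2}$, which unscales to the desired $F(u) \in C^{1,\gamma}$ in $B_{r/2}(x_0)$. The principal obstacle is the quantitative upgrade performed in the third paragraph: converting the purely measure-theoretic characterization of reduced boundary points into the pointwise trapping of $F(u_r)$ between two hyperplanes that is needed to invoke Theorem~\ref{flatmain2}. The quantitative Lipschitz and non-degeneracy estimates of \cite{DFS5} (which also underlie the rectifiability of $F^{\ast}(u)$) are exactly the right tool; granted them, the rest reduces to a clean rescaling and application of the flatness theorem.
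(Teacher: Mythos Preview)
Your overall strategy coincides with the paper's: rescale around $x_0\in F^{\ast}(u)$, use the measure-theoretic information at a reduced boundary point to produce the two-sided flatness trapping, and then invoke Theorem~\ref{flatmain2}. The paper, however, inserts an intermediate compactness step that you bypass, and this is exactly where your argument develops a gap.

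In the paper, after rescaling one passes to a subsequence $u_{r_j}$ converging uniformly on compacts to a global solution $U$ of the homogeneous problem with $A\equiv I$; the $L^1$ convergence of phases together with Lipschitz continuity forces $U=\alpha x_n^+-\beta x_n^-$ to be a two-plane solution. The uniform closeness $|u_{r_j}-U|<\varepsilon$, combined with $\alpha>0$, then immediately yields $u_{r_j}>0$ on $\{x_n>c\varepsilon\}$, i.e.\ the second inclusion $\{u_{r}^+=0\}\cap B_1\subset\{x_n\le c\varepsilon\}$. The first inclusion is obtained, as you do, from the uniform positive density of $\Omega^+(u)$ along $F(u)$.

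Your proposal tries to obtain both inclusions directly from the $L^1$ convergence of the phases, without identifying the blow-up limit. The first inclusion is fine: if $y\in F(u_r)$ (or more generally $y\in\{u_r>0\}$) with $y_n\le-\varepsilon$, nondegeneracy of $u^+$ plus Lipschitz continuity produce a set of measure $\gtrsim\varepsilon^n$ inside $\{u_r>0\}\cap\{x_n<0\}$, contradicting the $L^1$ convergence. But the ``symmetric argument using Lipschitz continuity'' you invoke for the second inclusion is not actually symmetric. If $u_r^+(y)=0$ with $y_n\ge\varepsilon$, you would need uniform positive density of $\{u_r^+=0\}$ at nearby free boundary points to manufacture a set of measure $\gtrsim\varepsilon^n$ inside $\{u_r\le 0\}\cap\{x_n>0\}$. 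Lipschitz continuity and nondegeneracy of $u^+$ give positive density only of the \emph{positive} phase; they say nothing about the non-positive phase. Without this, a single zero of $u_r^+$ deep in $\{x_n>0\}$ cannot be excluded by your mechanism. The paper's blow-up step is precisely what supplies the missing half: once $u_r$ is uniformly close to $\alpha x_n^+-\beta x_n^-$, positivity on $\{x_n>c\varepsilon\}$ is automatic.
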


Important questions remain open as further regularity results, that we will
consider in a forthcoming paper, and the analysis of the singular points of $%
F(u)\setminus F^{\ast }(u).$

\smallskip

The paper is organized as follows. In Section 2 we prove a non-degeneracy
property which allows us to reduce Theorem \ref{flatmain2} to a normalized
form. Sections 3 and 4 are devoted to the proof of the Harnack inequality
and the improvement of flatness lemma in the non-degenerate and degenerate
case respectively. In Section 5 we exhibit a new proof of the classical
result in \cite{C1}. Section 6 deals with Perron's solutions and Theorem \ref%
{precompactenss_th}. Finally, in Section 7 we apply our results to the
Prandtl-Batchelor classical model in hydrodynamics.

\section{Non-degeneracy}

In this section we prove a non-degeneracy property which together with
compactness arguments allows us to reduce our main Theorem \ref{flatmain2}
to a normalized form.

Denote by $U_\beta$ the one-dimensional function, 
\begin{equation*}
U_\beta(t) = \alpha t^+ - \beta t^-, \quad \beta \geq 0, \quad \alpha = 
\sqrt{1 +\beta^2},
\end{equation*}
where 
\begin{equation*}
t^+ = \max\{t,0\}, \quad t^-= -\min\{t,0\}.
\end{equation*}%
Then $U_\beta(x)= U_\beta(x_n)$ is the so-called two-plane solution to %
\eqref{FBintro} when $A \equiv I$ and $f \equiv 0$.

Below, is our non-degeneracy result.

\begin{lem}
\label{deltand}Let $u$ be a solution to \eqref{FBintro} in $B_2$ with $%
Lip(u) \leq L$ and $\|f\|_{L^\infty} \leq L$. If 
\begin{equation*}
\{x_n \leq g(x^{\prime}) - \delta\} \subset \{u^+=0\} \subset \{x_n \leq
g(x^{\prime}) + \delta\},
\end{equation*}
with $g$ a Lipschitz function, $Lip(g) \leq L, g(0)=0$, then 
\begin{equation*}
u(x) \geq c_0 (x_n- g(x^{\prime})), \quad x \in \{x_n \geq g(x^{\prime}) +
2\delta\}\cap B_{\rho_0},
\end{equation*}
for some $c_0, \rho_0 >0$ depending on $n,L, \lambda, \Lambda$ as long as $%
\delta \leq c_0.$
\end{lem}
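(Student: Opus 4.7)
The plan is to argue by contradiction, following the standard Caffarelli-style two-phase non-degeneracy scheme. Suppose, for arbitrarily small universal constants $c_0,\rho_0>0$, there is a point $x_0\in\{x_n\geq g(x')+2\delta\}\cap B_{\rho_0}$ with $u(x_0)<c_0\, d$, where $d:=x_{0,n}-g(x_0')$. We then (a) propagate the smallness of $u^+$ to a neighborhood of $x_0$ via an interior Harnack inequality for the divergence form operator $\mathcal{L}$, and (b) construct a one-parameter family of radial strict comparison subsolutions in the sense of Definition \ref{defsubcv} that, by continuity, must touch $u$ from below at a free boundary point, contradicting Definition \ref{defnhsol}(2).

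For (a): since $d\geq 2\delta$, the trapping hypothesis gives $B_{d/2}(x_0)\subset\{u>0\}$. There $u$ is a nonnegative weak solution of $\mathcal{L}u=f$ with $\|f\|_{L^\infty}\leq L$, so the Krylov--Safonov / De Giorgi--Nash--Moser Harnack inequality applied to $u+Ld^2$ gives
\begin{equation*}
\sup_{B_{d/4}(x_0)} u \;\leq\; C\bigl(u(x_0)+Ld^2\bigr)\;\leq\; C(c_0+L\rho_0)\,d,
\end{equation*}
which is $\leq\eta d$ for any prescribed $\eta>0$ provided $c_0,\rho_0$ are sufficiently small depending on $\eta$.

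For (b): build a two-phase radial barrier $V=\alpha W^+-\beta W^-$, where $W$ is a signed radial function of the form $W(x)=|x-z|^{-\mu}-R^{-\mu}$ on a suitable annulus, with center $z$ placed below the strip in the negative $e_n$-direction from $x_0$ and radius $R$ (of order $d$) chosen so that the spherical free boundary $\partial B_R(z)$ passes near $x_0$. For universal $\mu$ sufficiently large, $\mathcal{L}W\geq c\mu|x-z|^{-\mu-2}$ on any compact annulus bounded away from $z$, so we may pick universal $\alpha>\beta\geq 0$ for which $(\alpha^2-\beta^2)\langle A\nu,\nu\rangle|\nabla W|^2>1$ on $\partial B_R(z)$ (here $\nu$ is the outer radial unit normal) and $\mathcal{L}V>L\geq\|f\|_{L^\infty}$ in $\{V\neq 0\}$; then $V$ is a strict subsolution per Definition \ref{defsubcv}. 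Now consider the family $V_t$ obtained by translating $z\mapsto z+te_n$. At one end of the sliding parameter $t$ the sphere lies below $\{x_n=g(x')-\delta\}$ near $x_0$, so direct checking gives $V_t\leq u$ on the closure of a fixed annular sliding region $D\ni x_0$; at the other end the sphere has moved up past $x_0$ and $V_t(x_0)\geq \tilde c\,\alpha\, d>u(x_0)$ using step (a) with $\eta\ll\alpha$. By continuity there is a first contact parameter $t^*$ at which $V_{t^*}\leq u$ in $D$ with equality at some $\bar x\in\overline{D}$; the strict bulk inequality $\mathcal{L}V_{t^*}>f$ combined with the strong maximum principle rules out $\bar x$ lying in either phase, so $\bar x\in F(V_{t^*})$, directly contradicting Definition \ref{defnhsol}(2).

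The main obstacle is the barrier bookkeeping: simultaneously securing (i) the strict free boundary inequality $(\alpha^2-\beta^2)\langle A\nu,\nu\rangle|\nabla W|^2>1$ on $\partial B_R(z)$ (using only ellipticity and boundedness of $A$, which forces $R$ to stay below a universal threshold so that $|\nabla W|$ is large enough), (ii) the bulk inequality $\mathcal{L}V>L$ via the choice of $\mu$, and (iii) the two-sided ordering of $V_t$ against $u$ needed for the continuity/sliding argument (including a geometric fit of $D$ inside the positivity region of $u$ above the strip). Once these are orchestrated, the resulting constants $c_0,\rho_0$ depend only on $n,L,\lambda,\Lambda$, as required.
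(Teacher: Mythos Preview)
Your route is genuinely different from the paper's, and as written it has a real gap.

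The paper does not argue by contradiction. After rescaling, it constructs a one-parameter family of strict \emph{supersolutions} $h_t=\tfrac{1}{2C}w_t$, where $w_t$ solves $\mathcal{L}w_t=-1$ in the sliding annulus $(B_2\setminus\overline{B_1})(-te_n)$ with boundary data $0$ on the inner sphere and $1$ on the outer sphere, extended by $0$ inside the inner ball. For $t$ large the flatness hypothesis and the Lipschitz bound on $u$ give $h_t>u$. Decreasing $t$, the first contact cannot occur on $F(h_t)$ (strict supersolution) nor in the interior of either phase (strong maximum principle, Hopf), so it occurs on the outer sphere where $h_{\bar t}=\eta:=\tfrac{1}{2C}$; this yields a point $z$ in the positive phase with $u(z)=\eta$. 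A Harnack chain through the Lipschitz region above the strip then transports this lower bound to $e_n$.

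The gap in your argument is in step~(b). To slide a strict \emph{subsolution} $V_t$ from below on a domain $D$, you must maintain $V_t\le u$ on $\partial D$ for all $t$ in the sliding range. Once the sphere $\partial B_R(z_t)$ has entered $D$, part of $\partial D$ lies in $\{V_t>0\}$, where $V_t=\alpha W^+>0$; keeping $V_t\le u$ there requires a quantitative positive lower bound on $u$ along $\partial D$. No such bound is available a priori---it is exactly the non-degeneracy you are trying to prove. (Your step~(a) supplies only an \emph{upper} bound on $u$ near $x_0$, which points the wrong way.) The same circularity appears if you take the moving annulus itself as the comparison domain: its inner boundary sits near the center $z_t$, which you placed below the strip, so $u\le 0$ there while $V_t>0$, and the ordering fails outright. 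The paper's supersolution route sidesteps this because sliding from above needs only an \emph{upper} bound on $u$, and that is supplied by the Lipschitz estimate $u(z)\le L\, d(z,F(u))$.

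A smaller point: the inclusion $B_{d/2}(x_0)\subset\{u>0\}$ in step~(a) is not correct when $L=\mathrm{Lip}(g)>0$; since $g$ is $L$-Lipschitz, the admissible radius is of order $d/(1+L)$.
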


\begin{proof}
All constants in this proof will depend on $n,L, \lambda, \Lambda.$

It suffices to show that our statement holds for $\{x_n \geq g(x^{\prime}) +
C\delta\}$ for a possibly large constant $C$. Then one can apply Harnack
inequality to obtain the full statement.

We prove the statement above at $x=de_n$ (recall that $g(0)=0$). Precisely,
we want to show that 
\begin{equation*}
u(de_n) \geq c_0 d, \quad d \geq C \delta.
\end{equation*}%
After rescaling, we reduce to proving that 
\begin{equation*}
u(e_n) \geq c_0
\end{equation*}
as long as $\delta \leq 1/C$, and $\|f\|_\infty$ is sufficiently small. For $t\geq 0$, set
$$ \Omega_t:= (B_2 \setminus \bar B_1)(-te_n).$$
Solve,$$
\begin{cases} 
\mathcal L w_t = -1 \quad \text{in $\Omega_t$,}\\
w_t=0 \quad \text{on $\p B_1(-t e_n),$}\\
w_t=1 \quad \text{on $\p B_2(-t e_n)$.}
\end{cases}
$$
Extend $w_t=0$ in $B_1(-t e_n).$ By $C^{1,\bar \gamma}$ estimates and $L^\infty$ estimates (see \cite{GT}),
$$|\nabla_{A} w_t| \leq C(n,\lambda, \Lambda).$$
Hence, set $$h_t:=\frac{1}{2 C} w_t,$$ we have that 
\begin{equation*}
|\nabla _{A}h_t^+|^{2}-|\nabla _{A}h_t^-|^{2}=|\nabla _{A}h_t^+|^{2}<1,\quad %
\mbox{on}\:\:\:F(h_t).
\end{equation*}
In this way $w_t$ is a supersolution of our free boundary problem, as long as $\|f\|_\infty$ is small enough.

From our flatness assumption for $t=C(L)>0$ sufficiently large (depending on
the Lipschitz constant of $g$), $h_t$ is strictly above $u$. We decrease $t$
and let $\bar t$ be the first $t$ such that $h_t$ touches $u$ by above in $%
\Omega_{\bar t}$. Since $h_{\bar t}$ is a strict
supersolution in $\Omega_{\bar t}$ 
the touching point $z$ 
can occur only on the level set $\eta=\frac{1}{2C} $ in
the positive phase of $u.$ Otherwise, as usual we would get a contradiction
with the free boundary condition by the Hopf maximum principle and the
strong maximum principle. In addition $|z|\leq C' = C'(L),$ since $u$ is
Lipschitz continuous, $0< u (z) = \eta \leq L d(z, F(u))$, that is a full
ball around $z$ of radius $\eta/L$ is contained in the positive phase of $u$%
. Thus, for $\bar \delta$ small depending on $\eta, L$ we have that $%
B_{\eta/2L}(z) \subset \{x_n \geq g(x^{\prime}) + 2 \bar \delta\}$. Since $%
x_n =g(x^{\prime}) + 2 \bar \delta$ is Lipschitz we can connect $e_n$ and $z$
with a chain of intersecting balls included in the positive side of $u$ with
radii comparable to $\eta/2L$. The number of balls depends on $L$ . Then we
can apply Harnack inequality and obtain 
\begin{equation*}
u(e_n) \geq c u(z)= c_0,
\end{equation*}
as desired.
\end{proof}

Following the argument in Section 2 \cite{DFS1}, Theorem \ref{flatmain2}
reduces to the following theorem (via a compactness argument.)

\begin{thm}
\label{main_new} Let $u$ be a solution to \eqref{FBintro} in $B_1$ with $%
Lip(u) \leq L$. There exists a universal constant $\bar \varepsilon>0$ such
that, if 
\begin{equation}  \label{initialass}
\|u - U_{\beta}\|_{L^{\infty}(B_{1})} \leq \bar \varepsilon\quad \text{for
some $0 \leq \beta \leq L,$}
\end{equation}
and 
\begin{equation*}
\{x_n \leq - \bar \varepsilon\} \subset B_1 \cap \{u^+(x)=0\} \subset \{x_n
\leq \bar \varepsilon \},
\end{equation*}
and 
\begin{equation*}
[a_{ij}]_{C^{0,\bar\gamma}(B_1)} \leq \bar \varepsilon, \quad
\|f\|_{L^\infty(B_1)} \leq \bar \varepsilon,
\end{equation*}
then $F(u)$ is $C^{1,\gamma}$ in $B_{1/2}$.
\end{thm}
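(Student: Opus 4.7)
The plan is to follow the De Silva flatness-improvement scheme. The two pillars are a Harnack-type oscillation inequality in the flat regime and an improvement-of-flatness lemma obtained by linearization around the two-plane solution $U_\beta$; iterating the improvement at dyadic scales then pins down $F(u)$ as the graph of a $C^{1,\gamma}$ function.

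\textbf{Harnack inequality.} First I would show that if
$$U_\beta(x_n + a_0)\ \leq\ u(x)\ \leq\ U_\beta(x_n + b_0)\quad\text{in }B_1,\qquad b_0 - a_0 = \varepsilon,$$
with all the smallness hypotheses of Theorem~\ref{main_new} in force, then in $B_{1/20}$ the trap tightens to some $(a_1,b_1)$ with $b_1-a_1\leq(1-c)\varepsilon$ for a universal $c>0$. Away from $F(u)$, the gain comes from the interior Harnack inequality of De~Giorgi--Nash--Moser applied to $u - U_\beta(\cdot + a_0)$; to transfer it across the free boundary one inserts a Hopf-type barrier built from a shifted/dilated two-plane solution, in the spirit of the family $w_t$ used in Lemma~\ref{deltand}. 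The regimes $\beta$ bounded below (non-degenerate, Section~3) and $\beta$ near zero (degenerate, Section~4) require slightly different barriers.

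\textbf{Improvement of flatness and linearization.} Iteration of the Harnack step at geometric scales provides a uniform H\"older modulus of continuity for the normalized graphs of
$$\tilde v_\varepsilon(x)\ :=\ \frac{u(x) - U_\beta(x_n)}{\varepsilon}$$
on $B_{1/2}\setminus\{|x_n|\leq\varepsilon\}$. Arzel\`a--Ascoli extracts a subsequential limit $\tilde v_0$, and the viscosity formulation in Definition~\ref{defnhsol}, together with the bounds $\|f\|_\infty,[a_{ij}]_{C^{0,\bar\gamma}}\leq\bar\varepsilon$, shows that $\tilde v_0$ is a classical solution of a constant-coefficient linear transmission problem: $\mathrm{div}(A(0)\nabla\tilde v_0)=0$ on each side of $\{x_n=0\}$, with a prescribed linear flux jump on the interface involving $\alpha,\beta$ and $A(0)$. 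Classical $C^{1,\gamma}$ regularity for this transmission problem yields a first-order expansion of $\tilde v_0$ at the origin, which unscales into the improvement: for a universal $\rho$, the function $u$ is $\rho\varepsilon/2$-flat in $B_\rho$ with respect to a new two-plane solution $U_{\beta'}$ in a direction $\nu'$ with $|\beta'-\beta|+|\nu'-e_n|\leq C\varepsilon$.

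\textbf{Iteration and main obstacle.} Iterating the improvement at scales $\rho^k$, the sequences $\beta_k,\nu_k$ are Cauchy at geometric rate (including the $\rho^{k\bar\gamma}$ defect from the H\"older variation of $A$), defining a limit normal that provides a $C^{1,\gamma}$ tangent plane to $F(u)$ at each point of $B_{1/2}$, for any $\gamma<\bar\gamma$. The hard part will be the degenerate case $\beta\to 0$: the negative phase collapses, the transmission problem becomes a one-phase Neumann problem for $\tilde v_0$ on $\{x_n>0\}$ with $\partial_n\tilde v_0=0$ on $\{x_n=0\}$, and one must both supply a self-standing one-phase barrier for Harnack and carefully control the new parameter $\beta'$ so that the iteration does not drift into a regime where the improvement lemma breaks; this is precisely why the argument is split into Sections~3 and~4.
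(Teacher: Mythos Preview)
Your overall approach matches the paper's: Harnack inequality for compactness, improvement of flatness via linearization to a transmission problem, and iteration at geometric scales, with the non-degenerate and degenerate regimes treated separately exactly as in Sections~3 and~4.

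However, the paper explicitly names \emph{three} key tools, not two: the Harnack inequality, the improvement-of-flatness lemma, and the \emph{dichotomy lemma} (Lemma~\ref{finalcase}). Your closing paragraph correctly identifies the danger---that along the iteration $\beta$ may drift toward zero so that neither the non-degenerate hypothesis $\|f\|_\infty\le\varepsilon^2\beta$ nor the degenerate hypothesis $\|u^-\|_\infty\le\varepsilon^2$ is available---but you do not say how to escape it. The dichotomy lemma is precisely that escape: when $\|u^-\|_\infty$ lies in the intermediate window $(\varepsilon^2,\bar C\varepsilon^2]$, a single $\varepsilon^{1/2}$-rescaling produces a new configuration that is $C'\varepsilon^{1/2}$-flat with respect to a two-plane solution $U_{\beta'}$ having $\beta'\sim\varepsilon^2$ bounded below, so the iteration re-enters the non-degenerate branch. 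Without this lemma the iteration is not closed.

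Two smaller points. First, the paper's normalization is not your $\tilde v_\varepsilon=(u-U_\beta)/\varepsilon$ but rather the piecewise rescaling $(u-\alpha x_n)/(\alpha\varepsilon)$ on the positive side and $(u-\beta x_n)/(\beta\varepsilon)$ on the negative side; this is what makes the Harnack corollary give uniform H\"older bounds in the non-degenerate case regardless of how small $\beta$ is. Second, the paper arranges $\|A-I\|_{C^{0,\bar\gamma}}\le\varepsilon^2$ before linearizing (via a linear change of variables sending $A(0)$ to $I$ and then restricting to a small ball), so the limiting transmission problem is for the Laplacian with the jump condition $\tilde\alpha^2(\tilde u_n)^+-\tilde\beta^2(\tilde u_n)^-=0$, rather than for $\mathrm{div}(A(0)\nabla\cdot)$ as you wrote; the two formulations are equivalent, but the paper's choice simplifies the barrier constructions.
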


The proof of Theorem \ref{main_new} follows verbatim the proof of Theorem
2.8 in \cite{DFS1}, once we have established the three key tools: the Harnack
inequality, the improvement of flatness lemma, and the dichotomy lemma.
Below we provide the proof of the dichotomy lemma, which differs from the
case $A=I$ only slightly. The Harnack inequality and the improvement of
flatness lemma are presented in the next two sections.

\begin{lem}
\label{finalcase} Let $u$ solve \eqref{FBintro} in $B_2$ with 
\begin{equation*}
\|A-I\|_{C^{0,\bar \gamma}} \leq \varepsilon^2, \quad \|f\|_{L^\infty(B_1)}
\leq \varepsilon^4
\end{equation*}
and satisfy
\begin{equation}  \label{flat**}
U_0(x_n -\varepsilon) \leq u^+(x) \leq U_0(x_n + \varepsilon) \quad \text{in 
$B_1,$} \quad 0\in F(u),
\end{equation}%
\begin{equation*}
\|u^-\|_{L^{\infty}(B_2)} \leq \bar C\varepsilon^2, \quad
\|u^-\|_{L^\infty(B_1)} > \varepsilon^2,
\end{equation*}
for a universal constant $\bar C.$ If $\varepsilon \leq \varepsilon_2$
universal, then the rescaling 
\begin{equation*}
u_\varepsilon(x) = \varepsilon^{-1/2}u(\varepsilon^{1/2}x)
\end{equation*}
satisfies in $B_1$ 
\begin{equation*}
U_{\beta^{\prime}}(x_n - C^{\prime}\varepsilon^{1/2}) \leq
u_{\varepsilon}(x) \leq U_{\beta^{\prime}}(x_n + C^{\prime}\varepsilon^{1/2})
\end{equation*}
with $\beta^{\prime}\sim \varepsilon^2$ and $C^{\prime}>0$ depending on $%
\bar C$.
\end{lem}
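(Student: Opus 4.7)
The idea is to convert the ``degenerate'' situation---where $u^-$ is quantitatively much smaller than $u^+$---into a non-degenerate one by absorbing the small negative part into a two-plane model $U_{\beta'}$ of slope $\beta'\sim\varepsilon^2$. The rescaling factor $\varepsilon^{1/2}$ is chosen precisely to equalize the orders of $u^+$ and $u^-$ after blow-up. The plan is to follow the scheme used for the Laplace operator in the corresponding dichotomy lemma of \cite{DFS1}, replacing harmonic tools with their divergence-form counterparts.

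The positive part is essentially immediate: rescaling (\ref{flat**}) yields
\[
(x_n-\varepsilon^{1/2})^+\leq u_\varepsilon^+(x)\leq (x_n+\varepsilon^{1/2})^+ \quad\text{in }B_1,
\]
and since $\sqrt{1+(\beta')^2}=1+O(\varepsilon^4)$ when $\beta'\sim\varepsilon^2$, this upgrades to the $U_{\beta'}$-sandwich at the cost of a slight enlargement of $C'$.

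For the negative part, I would work with the normalization $v:=u^-/\varepsilon^2$ on $B_2$: it is non-negative, bounded above by $\bar C$, satisfies $\|v\|_{L^\infty(B_1)}>1$, vanishes on $F(u)\subset\{|y_n|\leq\varepsilon\}$, and solves $\mathcal L v=-f/\varepsilon^2$ in $\Omega^-(u)$ with $|f/\varepsilon^2|\leq\varepsilon^2$. Interior Harnack, combined with a boundary Hopf/Carleson-type comparison for $\mathcal L$, shows that $v$ is comparable to the distance to $F(u)$ on compact subsets; this allows one to isolate a universal slope $\beta_*\in[c,C]$, and then to prove, by comparing $v$ with the linear barriers $-\beta_*y_n\pm C\varepsilon$ (suitably modified to absorb the $O(\varepsilon^2)$ divergence-form perturbation, e.g., by replacing $-\beta_* y_n$ with its $\mathcal L$-harmonic extension from $\partial B_2\cap\{y_n<0\}$), that
\[
|v(y)+\beta_* y_n|\leq C\varepsilon \quad\text{in } B_{1/2}\cap\{y_n\leq-\varepsilon\}.
\]
Taking $\beta':=\beta_*\varepsilon^2$ and undoing both normalizations converts this to $|u_\varepsilon^-(x)+\beta' x_n|\leq C\beta'\varepsilon^{1/2}$ on $B_1\cap\{x_n\leq-\varepsilon^{1/2}\}$, which combines with the inclusion $F(u_\varepsilon)\subset\{|x_n|\leq\varepsilon^{1/2}\}$ and the positive-part estimate to assemble the desired two-sided sandwich around $U_{\beta'}$ with $C'=C'(\bar C)$.

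The principal obstacle is the \emph{quantitative} linear approximation of $v$ to accuracy $O(\varepsilon)$. Since $F(u)$ is only $\varepsilon$-flat (not smooth), standard Schauder bounds do not apply up to $F(u)$, so one must rely on boundary Harnack/comparison theory for divergence-form operators and carefully track the three small parameters---$\|A-I\|_{C^{0,\bar\gamma}}\leq\varepsilon^2$, $|f|\leq\varepsilon^4$, and the $\varepsilon$-flatness of $F(u)$. Once this approximation is in hand, the remaining algebra is routine.
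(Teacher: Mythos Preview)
Your overall strategy is the paper's: normalize $v=u^-/\varepsilon^2$, show it is approximately a linear function of slope $\sim 1$ on the negative side, then rescale and recombine with the positive part. The handling of $u^+$ and the final algebra are correct. The gap is exactly at what you call the principal obstacle. Your mechanism---use Harnack and boundary Hopf/Carleson to place a slope $\beta_*$ in a universal interval $[c,C]$, then compare $v$ with barriers $-\beta_* y_n\pm C\varepsilon$---cannot close: comparability to the distance determines $\beta_*$ only up to a universal multiplicative constant, not to $O(\varepsilon)$ accuracy, and you have no control of $v$ on $\partial B_1$ (or $\partial B_2$) beyond $0\le v\le\bar C$, so there is no way to make those linear barriers sandwich $v$. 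Relatedly, the region $B_{1/2}$ you claim is too large: even for a harmonic function vanishing on a flat piece one only has $|\tilde w-a\,|x_n+\varepsilon||\le C|x|^2$, which is $O(\varepsilon)$ only on $B_{\varepsilon^{1/2}}$. That smaller ball is precisely what the $\varepsilon^{1/2}$-rescaling needs, and no more.

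The paper avoids boundary Harnack on the rough set $\Omega^-(u)$ altogether. A crude barrier (solve $\mathcal L\bar v=-\varepsilon^2$ on $B_2\cap\{x_n<\varepsilon\}$ with data $v$) first gives $v\le k|x_n-\varepsilon|$, hence $v\le 2k\varepsilon$ on the hyperplane $\{x_n=-\varepsilon\}$. One then replaces $v$ by the $\mathcal L$-harmonic function $w$ on the \emph{flat} half-ball $B_1\cap\{x_n<-\varepsilon\}$ (zero on the flat part, $w=v$ on the curved part); the maximum principle gives $|v-w|\le C\varepsilon$. Next, using $\|A-I\|\le\varepsilon^2$ and the Lipschitz bound on $w$, one passes to the genuinely harmonic $\tilde w$ with the same data, incurring an $O(\varepsilon^2)$ error. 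Now $\tilde w$ is harmonic with zero Dirichlet data on a flat boundary, so the boundary $C^{1,1}$ expansion \emph{produces} the slope $a$ via $|\tilde w-a|x_n+\varepsilon||\le C|x|^2+C\varepsilon$; Hopf (the interior point with $v>1$ sits at fixed distance from $\{x_n=-\varepsilon\}$) forces $a\ge c>0$. Restricting to $B_{\varepsilon^{1/2}}$ collapses all errors to $O(\varepsilon)$, and from there your rescaling argument goes through verbatim with $\beta'=a\varepsilon^2$.
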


\begin{proof}
For notational simplicity we set 
\begin{equation*}
v = \frac{u^-}{\varepsilon^2}.
\end{equation*}
From our assumptions we can deduce that 
\begin{equation*}
F(v) \subset \{-\varepsilon \leq x_n \leq \varepsilon\},
\end{equation*}
\begin{equation}  \label{neg}
v \geq 0 \quad \text{in $B_2 \cap \{x_n \leq -\varepsilon\}$}, \quad v
\equiv 0 \quad \text{in $B_2 \cap \{x_n > \varepsilon\}.$}
\end{equation}
Also, 
\begin{equation*}
|\mathcal L v| \leq \varepsilon^2, \quad \text{in $B_2 \cap \{x_n <
-\varepsilon\}$},
\end{equation*}
and 
\begin{equation}  \label{C}
0 \leq v \leq \bar C \quad \text{on $\partial B_2$,}
\end{equation}
\begin{equation}  \label{barx}
v(\bar x) >1 \quad \text{at some point $\bar x$ in $B_1.$}
\end{equation}

Thus, using comparison with the function $\bar v$ such that 
\begin{equation*}
\mathcal L\bar v = -\varepsilon^2 \quad \text{in $D:=B_2 \cap
\{x_n < \varepsilon\}$ and $\bar v= v$ on $\partial D$}
\end{equation*}
we obtain that for some $k>0$ universal 
\begin{equation}  \label{ke}
v \leq k|x_n -\varepsilon|, \quad \text{in $B_1$}.
\end{equation}
This fact forces the point $\bar x$ in \eqref{barx} to belong to $B_1\cap
\{x_n < -\varepsilon\}$ at a fixed distance $\delta$ from $x_n =
-\varepsilon.$

Now, let $w$ be the solution to $\mathcal Lw=0$ in $B_1 \cap \{x_n < -\varepsilon\}$
such that 
\begin{equation*}
w=0 \quad \text{on $B_1 \cap \{x_n=-\varepsilon\}$}, \quad w=v \quad \text{%
on $\partial B_1 \cap \{x_n \leq -\varepsilon\}$}.
\end{equation*}
We conclude that 
\begin{equation}  \label{w-v}
|w-v| \leq c\varepsilon \quad \text{in $B_1 \cap \{x_n < -\varepsilon\}$}.
\end{equation}
In particular this is true at $\bar x$ which forces 
\begin{equation}  \label{wbarx}
w(\bar x) \geq 1/2.
\end{equation}
Furthermore, let $\tilde w$ be harmonic in $B_{9/10} \cap \{x_n < -\varepsilon\}$, with boundary data $w$. By expanding $\tilde w$ around $(0,
-\varepsilon)$ we then obtain, say in $B_{3/4} \cap \{x_n \leq
-\varepsilon\} $ 
\begin{equation*}
|\tilde w - a |x_n + \varepsilon|| \leq C |x|^2 +C\varepsilon.
\end{equation*}
Moreover, since $w$ is Lipschitz, then in $B_{3/4} \cap \{x_n \leq
-\varepsilon\}$ 
$$|w-\tilde w| \leq C \eps^2.$$
These last two inequalities combined with \eqref{w-v}  give that 
\begin{equation*}
|v - a|x_n+\varepsilon|| \leq C\varepsilon, \quad \text{in $%
B_{\varepsilon^{1/2}} \cap \{x_n \leq -\varepsilon\}$.}
\end{equation*}
In view of \eqref{wbarx} and the fact that $\bar x$ occurs at a
fixed distance from $\{x_n = -\varepsilon\}$ we deduce from Hopf lemma that 
\begin{equation*}
a \geq c>0
\end{equation*}
with $c$ universal. In conclusion (see \eqref{ke}) 
\begin{equation*}  \label{u-}
|u^- - b\varepsilon^2|x_n+\varepsilon|| \leq C\varepsilon^3, \quad \text{in $%
B_{\varepsilon^{1/2}} \cap \{x_n \leq -\varepsilon\}$,} \quad u^- \leq
b\varepsilon^2 |x_n-\varepsilon|, \quad \text{in $B_1$}
\end{equation*}
with $b$ comparable to a universal constant.

Combining the two inequalities above and the assumption \eqref{flat**} we
conclude that in $B_{\varepsilon^{1/2}}$ 
\begin{equation*}
(x_n - \varepsilon)^+ - b\varepsilon^2(x_n-C\varepsilon)^- \leq u(x) \leq
(x_n+\varepsilon)^+ - b \varepsilon^2 (x_n+C\varepsilon)^-
\end{equation*}
with $C>0$ universal and $b$ larger than a universal constant. Rescaling, we
obtain that in $B_1$ 
\begin{equation*}
(x_n - \varepsilon^{1/2})^+ - \beta^{\prime}(x_n-C\varepsilon^{1/2})^- \leq
u_\varepsilon(x) \leq (x_n+\varepsilon^{1/2})^+ -
\beta^{\prime}(x_n+C\varepsilon^{1/2})^-
\end{equation*}
with $\beta^{\prime}\sim \varepsilon^2$. We finally need to check that this
implies the desired conclusion in $B_1:$ 
\begin{equation*}
\alpha^{\prime}(x_n - C\varepsilon^{1/2})^+ -
\beta^{\prime}(x_n-C\varepsilon^{1/2})^- \leq u_\varepsilon(x) \leq
\alpha^{\prime}(x_n+C\varepsilon^{1/2})^+ -
\beta^{\prime}(x_n+C\varepsilon^{1/2})^-
\end{equation*}
with $(\alpha^{\prime})^2=1+(\beta^{\prime})^2 \sim 1+ \varepsilon^4.$ This
clearly holds in $B_1$ for $\varepsilon$ small, say by possibly enlarging $C$
so that $C \geq 2.$
\end{proof}

\section{Non-degenerate case}

In this section we prove the Harnack inequality and the improvement of
flatness lemma in the so-called non-degenerate case. In this case our
solution $u$ is trapped between two translations of a ``true" two-plane
solution $U_\beta$ that is ($L=Lip(u)$)
\begin{equation*}
0 < \beta \leq L.
\end{equation*}

\subsection{Harnack inequality}

We start with the Harnack Inequality.

\begin{thm}[Harnack inequality]
\label{HI} There exists a universal constant $\bar \varepsilon$, such that
if $u$ satisfies at some point $x_0 \in B_2$

\begin{equation}  \label{osc}
U_\beta(x_n+ a_0) \leq u(x) \leq U_\beta(x_n+ b_0) \quad \text{in $B_r(x_0)
\subset B_2,$}
\end{equation}
with 
\begin{equation}  \label{G}
\|f\|_{L^\infty(B_1)} \leq \varepsilon^2 \beta, \quad \|A-I\|_{C^{0,\bar
\gamma}}\leq \varepsilon^2,
\end{equation}
and 
\begin{equation*}
b_0 - a_0 \leq \varepsilon r,
\end{equation*}
for some $\varepsilon \leq \bar \varepsilon,$ then 
\begin{equation*}
U_{\beta}(x_n+ a_1) \leq u(x) \leq U_\beta(x_n+ b_1) \quad \text{in $%
B_{r/20}(x_0)$},
\end{equation*}
with 
\begin{equation*}
a_0 \leq a_1 \leq b_1 \leq b_0, \quad b_1 - a_1\leq (1-c)\varepsilon r,
\end{equation*}
and $0<c<1$ universal.
\end{thm}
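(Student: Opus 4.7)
The plan is to run the standard Caffarelli--De Silva Harnack argument adapted to the divergence form operator $\mathcal L$. By the rescaling $\tilde u(x) = u(x_0 + rx)/r$, $\tilde A(x) = A(x_0 + rx)$, the hypotheses transform into their $r = 1$ versions: $U_\beta$ is positively homogeneous of degree one, so the flatness assumption passes through, while the smallness constants in \eqref{G} are preserved up to factors $r^{\bar\gamma} \leq 1$. Hence I may assume $x_0 = 0$, $r = 1$. Comparing $u$ with the midplane translate at the interior point $\bar x := \tfrac{1}{5} e_n$, one of the two alternatives
\[
u(\bar x) \geq U_\beta\!\left(\tfrac{1}{5} + \tfrac{a_0 + b_0}{2}\right) \quad\text{or}\quad u(\bar x) \leq U_\beta\!\left(\tfrac{1}{5} + \tfrac{a_0 + b_0}{2}\right)
\]
must hold. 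By symmetry it suffices to treat the first, under which I aim to produce a universal $c > 0$ with $u \geq U_\beta(x_n + a_0 + c\varepsilon)$ on $\overline{B_{1/20}}$.

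Because $\bar x$ sits at a fixed distance from every free boundary $\{x_n + s = 0\}$ with $a_0 \leq s \leq b_0$, it lies in the positive phase of $u$, and the Lipschitz continuity of $t \mapsto U_\beta(t)$ yields a pointwise gap $u(\bar x) - U_\beta(\bar x \cdot e_n + a_0) \geq c_0 \varepsilon$. The interior Harnack inequality for $\mathcal L$ with bounded right-hand side $f$ then spreads this gap to a full neighborhood of $\bar x$: $u(x) \geq U_\beta(x_n + a_0) + c_1 \varepsilon$ on $\partial B_{1/10}(\bar x)$.

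The core step is the construction of a strict $C^{1,\bar\gamma}$ comparison subsolution $\varphi$ in the sense of Definition \ref{defsubcv}, of the form
\[
\varphi(x) := U_\beta\!\left(x_n + a_0 + c\varepsilon\, \psi(x) - \eta\right),
\]
for a universal smooth function $\psi$ satisfying two properties: first, the lower bound $\partial_n \psi \geq \kappa > 0$ uniformly on $\overline{B_{1/20}}$, obtained via Hopf's lemma applied to an auxiliary $\mathcal L$-Dirichlet problem posed on $B_1 \setminus \overline{B_{1/10}(\bar x)}$; second, a definite sign on $\mathcal L \psi$ strong enough that $\mathcal L \varphi > f$ holds in each phase once $\varepsilon$ is small, using the bound $\|f\|_\infty \leq \varepsilon^2 \beta$. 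A direct expansion on $F(\varphi)$ then gives
\[
|\nabla_A \varphi^+|^2 - |\nabla_A \varphi^-|^2 = (\alpha^2 - \beta^2)\bigl(1 + 2c\varepsilon\, \partial_n \psi\bigr) + O(\varepsilon^2) = 1 + 2c\varepsilon\, \partial_n \psi + O(\varepsilon^2) > 1,
\]
using $\alpha^2 - \beta^2 = 1$ together with $\|A - I\|_{C^{0,\bar\gamma}} \leq \varepsilon^2$.

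I would then slide: choose $\eta_0$ initially large so that $\varphi < u$ throughout $\overline{B_1}$, then decrease $\eta$ down to $0$. Any first contact occurring at some $\eta > 0$ is ruled out in the interior of each phase by the strong maximum principle applied to $\mathcal L(\varphi - u) > 0$, on $F(\varphi)$ by the strict free boundary inequality above, and on $\partial B_1$ by the original flatness hypothesis combined with $\psi|_{\partial B_1} = 0$. Hence the sliding proceeds all the way to $\eta = 0$, yielding $\varphi \leq u$ in $B_1$; evaluating on $\overline{B_{1/20}}$ where $\psi \geq \kappa_0 > 0$ gives $u \geq U_\beta(x_n + a_0 + c\kappa_0 \varepsilon)$, the desired improvement. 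The main obstacle I expect is the barrier calibration: the $O(\varepsilon^2)$ error terms combine the $\mathcal L$-commutator with the non-constant coefficients (treated weakly since only H\"older regularity of $A$ is available), the contribution from $f$, and nonlinear cross terms in $|\nabla_A \varphi^\pm|^2$. The hypotheses $\|A - I\|_{C^{0,\bar\gamma}} \leq \varepsilon^2$ and $\|f\|_\infty \leq \varepsilon^2 \beta$ are tuned precisely so that these errors are $o(\varepsilon)$ and can be absorbed by the first-order gain $2c\varepsilon\, \partial_n \psi$.
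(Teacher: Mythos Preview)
Your skeleton---rescale, compare at $\bar x = \tfrac15 e_n$, spread by interior Harnack, build a barrier of the form $U_\beta(x_n + c\varepsilon\psi - \eta)$, and slide---is exactly the paper's strategy. But there is a genuine gap at the step you flag yourself: the interior subsolution check $\mathcal L\varphi > f$.

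With $A$ merely H\"older, the barrier $\varphi(x)=U_\beta(x_n + a_0 + c\varepsilon\psi(x)-\eta)$ is \emph{not} an $\mathcal L$-subsolution in any usable sense, regardless of how $\psi$ is chosen. In each phase $\varphi$ is affine in $x_n$ plus $c\varepsilon$ times $\psi$, so
\[
\mathcal L\varphi \;=\; (\alpha\text{ or }\beta)\bigl(\mathrm{div}((A-I)e_n) + c\varepsilon\,\mathcal L\psi\bigr),
\]
and the term $\mathrm{div}((A-I)e_n)$ is a genuine distribution, not a function; it cannot be compared pointwise to $f$, and the strong maximum principle needed to rule out interior touching in the sliding argument does not apply. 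Choosing $\psi$ as the solution of an $\mathcal L$-Dirichlet problem does not help, because the obstruction comes from the linear part $x_n$, not from $\psi$. Your parenthetical ``treated weakly'' hides exactly the difficulty.

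The paper's fix is not to compose a single $\mathcal L$-adapted $\psi$ inside $U_\beta$, but to build the barrier for the \emph{Laplacian} first and then correct. Concretely, it sets $v_t(x)=U_\beta(x_n - \varepsilon c_0\bar w(x) + t\varepsilon)$ with $w$ the explicit radial function $c(|x-\bar x|^{-\eta}-(3/4)^{-\eta})$ on the annulus, and then in each phase solves
\[
\mathcal L\phi_t^{\pm} = -\,\mathrm{div}\bigl((A-I)\nabla v_t\bigr)\quad\text{in }D_{7/8,t}^{\pm},\qquad \phi_t^{\pm}=0\ \text{on }\partial D_{7/8,t}^{\pm}.
\]
The point is that $\psi_t:=v_t+\phi_t$ satisfies $\mathcal L\psi_t=\Delta v_t \geq c\varepsilon\beta$ \emph{pointwise}, which beats $\|f\|_\infty\le \varepsilon^2\beta$. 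Since $\|A-I\|_{C^{0,\bar\gamma}}\le\varepsilon^2$, Schauder and $L^\infty$ estimates give $\|\phi_t^{\pm}\|_{C^{1,\bar\gamma}}=O(\varepsilon^2)$, so the free boundaries of $\psi_t$ and $v_t$ coincide (one checks $\partial_n\psi_t>0$), and the gradient perturbation is harmless in the free boundary computation. Note that the correction $\phi_t$ depends on the sliding parameter and on the phase; it is an additive correction to the two-plane-type barrier, not something you can fold into a single $\psi$ composed inside $U_\beta$.

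A secondary point: Hopf's lemma gives you the sign of the \emph{normal} derivative of $\psi$ on the boundary of the annulus, not $\partial_n\psi\ge\kappa$ along the level set $F(\varphi)$ in the interior, which is what the free boundary inequality needs. The paper sidesteps this by using the explicit $w$, for which $w_n>0$ on the relevant strip is a direct computation.
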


We deduce the following Corollary for the oscillation of $u$, i.e., 
\begin{equation*}
\tilde u_\varepsilon(x) = 
\begin{cases}
\dfrac{u(x) -\alpha x_n }{\alpha\varepsilon} \quad \text{in $B_2^+(u) \cup
F(u)$} \\ 
\  \\ 
\dfrac{u(x) -\beta x_n }{\beta\varepsilon} \quad \text{in $B_2^-(u).$}%
\end{cases}%
\end{equation*}

\begin{cor}
\label{corollary}Let $u$ be as in Theorem $\ref{HI}$ satisfying \eqref{osc}
for $r=1$. Then in $B_1(x_0)$ $\tilde u_\varepsilon$ has a H\"older modulus
of continuity at $x_0$, outside the ball of radius $\varepsilon/\bar
\varepsilon,$ i.e for all $x \in B_1(x_0)$, with $|x-x_0| \geq
\varepsilon/\bar\varepsilon$ 
\begin{equation*}
|\tilde u_\varepsilon(x) - \tilde u_\varepsilon (x_0)| \leq C |x-x_0|^\gamma.
\end{equation*}
\end{cor}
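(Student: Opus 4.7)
The plan is to iterate Theorem \ref{HI} dyadically on scales $r_k := 20^{-k}$ and then translate the resulting two-sided trapping of $u$ into an oscillation estimate for $\tilde u_\varepsilon$. I would first construct inductively monotone sequences $a_0 \leq a_1 \leq \ldots$ and $b_0 \geq b_1 \geq \ldots$ with $U_\beta(x_n + a_k) \leq u(x) \leq U_\beta(x_n + b_k)$ on $B_{r_k}(x_0)$ and $b_k - a_k \leq (1-c)^k \varepsilon$. At the $k$-th step one applies Theorem \ref{HI} on $B_{r_k}(x_0)$ with effective smallness parameter $\varepsilon_k := (20(1-c))^k \varepsilon$, matching the ratio (oscillation)/(radius). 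Since $20(1-c) > 1$ (making $\bar\varepsilon$ smaller if needed to absorb $c$), the $\varepsilon_k$ increase, so the iteration persists as long as $\varepsilon_k \leq \bar\varepsilon$. Let $k^*$ be the largest such index; a direct computation gives $r_{k^*} = (\varepsilon/\bar\varepsilon)^{\log 20/\log(20(1-c))} \leq \varepsilon/\bar\varepsilon$. The conditions on $f$ and $A-I$ propagate automatically since $\varepsilon_k \geq \varepsilon$ makes $\|f\|_\infty \leq \varepsilon_k^2 \beta$ and $\|A-I\|_{C^{0,\bar\gamma}} \leq \varepsilon_k^2$ only easier to satisfy.

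Next, I would convert the trapping of $u$ into an oscillation estimate for $\tilde u_\varepsilon$. A short case analysis shows that on $B_{r_k}(x_0)$,
\[
\frac{a_k}{\varepsilon} \leq \tilde u_\varepsilon(x) \leq \frac{b_k}{\varepsilon},
\]
regardless of whether $x$ lies in the positive phase, in the negative phase, or in the thin strip $\{-b_k \leq x_n \leq -a_k\}$ containing $F(u)$. Indeed, whenever the ``wrong'' half of $U_\beta$ returns a value of sign opposite to $u(x)$, the corresponding inequality is trivial and the matching half of $U_\beta$ supplies the opposing bound. Consequently the oscillation of $\tilde u_\varepsilon$ on $B_{r_k}(x_0)$ is at most $(b_k - a_k)/\varepsilon \leq (1-c)^k$.

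Setting $\gamma := \log(1/(1-c))/\log 20 \in (0,1)$ yields $(1-c)^k = r_k^\gamma$. For any $x \in B_1(x_0)$ with $|x - x_0| \geq \varepsilon/\bar\varepsilon \geq r_{k^*}$, one selects the integer $k \leq k^*$ with $r_{k+1} \leq |x - x_0| \leq r_k$ and concludes
\[
|\tilde u_\varepsilon(x) - \tilde u_\varepsilon(x_0)| \leq \operatorname{osc}_{B_{r_k}(x_0)} \tilde u_\varepsilon \leq (1-c)^k = r_k^\gamma \leq 20^\gamma |x - x_0|^\gamma,
\]
which gives the desired Hölder estimate with universal constants.

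The most delicate point I expect is the translation step, because $\tilde u_\varepsilon$ is defined by two different normalizations on the two phases of $u$; one has to verify carefully that the two-plane trapping maps uniformly onto the same interval $[a_k/\varepsilon, b_k/\varepsilon]$ on both sides, including at points where the two-plane straddles $\{x_n = 0\}$. Once this bookkeeping is done, the dyadic iteration of Theorem \ref{HI} and the final Hölder interpolation are standard.
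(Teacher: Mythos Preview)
Your proposal is correct and is precisely the standard dyadic iteration of Theorem \ref{HI} that the paper intends; the paper itself omits the proof of this Corollary, stating it as a direct consequence of Theorem \ref{HI} (the analogous argument appears in \cite{DFS1}). Your handling of the two potentially subtle points---that the side conditions \eqref{G} only become easier since $\varepsilon_k \geq \varepsilon$, and the case analysis translating the two-plane trapping into the single interval $[a_k/\varepsilon, b_k/\varepsilon]$ for $\tilde u_\varepsilon$ across both phases---is exactly what is needed.
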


The proof of the Harnack inequality follows from the next lemma (see \cite%
{DFS1} for details).

\begin{lem}
\label{main}There exists a universal constant $\bar \varepsilon>0$ such that
if \eqref{G} holds for $0<\varepsilon < \bar \varepsilon$, and $u$ satisfies 
\begin{equation*}
u(x) \geq U_\beta(x), \quad \text{in $B_1$}
\end{equation*}
and at $\bar x=\dfrac{1}{5}e_n$ 
\begin{equation}  \label{u-p>ep2}
u(\bar x) \geq U_\beta(\bar x_n + \varepsilon),
\end{equation}
then 
\begin{equation}
u(x) \geq U_\beta(x_n+c\varepsilon), \quad \text{in $\overline{B}_{1/2},$}
\end{equation}
for some $0<c<1$ universal. Analogously, if 
\begin{equation*}
u(x) \leq U_\beta(x), \quad \text{in $B_1$}
\end{equation*}
and 
\begin{equation*}
u(\bar x) \leq U_\beta(\bar x_n - \varepsilon),
\end{equation*}
then 
\begin{equation*}
u(x) \leq U_\beta(x_n - c \varepsilon), \quad \text{in $\overline{B}_{1/2}.$}
\end{equation*}
\end{lem}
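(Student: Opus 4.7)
I plan to follow the sliding strict-subsolution strategy of Lemma~3.3 of \cite{DFS1}, with two adjustments for the divergence-form operator $\L$ with H\"older coefficients. First, any use of the linear function $\alpha x_n$ is replaced by an $\L$-harmonic counterpart: let $\phi$ solve $\L\phi=0$ in $B_1$ with $\phi=x_n$ on $\p B_1$; by Schauder estimates and $\|A-I\|_{C^{0,\bar\gamma}}\leq\varepsilon^2$, we have $|\phi-x_n|\leq C\varepsilon^2$. Second, each strict inequality produced by the barrier construction must leave slack of order larger than $\varepsilon^2$, in order to absorb errors coming from $\|f\|_{L^\infty}\leq\varepsilon^2\beta$ and from the variation of the coefficients.

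The argument has two parts. First, interior Harnack in the positive phase: since $u\geq U_\beta$, the positive phase contains $\{x_n>0\}$, so $B_{1/10}(\bar x)\subset\Omega^+(u)$ and $\L u=f$ there weakly. The function $w:=u-\alpha\phi$ satisfies $\L w=f$, is bounded below by $-C\alpha\varepsilon^2$ on $\{x_n\geq 0\}\cap B_1$, and satisfies $w(\bar x)\geq\tfrac12\alpha\varepsilon$ by \eqref{u-p>ep2}; the De Giorgi--Moser--Nash Harnack inequality then yields $w\geq c_1\alpha\varepsilon$ on $B_{1/20}(\bar x)$, i.e., $u\geq\alpha(x_n+c_1\varepsilon)-C\alpha\varepsilon^2$ there. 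Second, a sliding subsolution argument: pick $y_0=-\rho e_n$ and a radial profile $\psi(x)=g(|x-y_0|)$ on an annulus $\mathcal A$ whose inner sphere sits inside $B_{1/20}(\bar x)$ and whose outer sphere crosses $B_{1/2}$ through $\{|x_n|\lesssim\varepsilon\}$, with $g$ tuned so that $\L\psi\geq c_2>0$ in $\mathcal A$, $\psi\equiv 1$ on the inner sphere, $\psi\equiv 0$ on the outer sphere, and $|\p_n\psi|\geq c_3>0$ there. Set $p_t(x)=x_n+c_0\varepsilon t\psi(x)$ and $v_t=\alpha p_t^+-\beta p_t^-$. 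Starting from $v_0=U_\beta\leq u$, I slide $t$ up to $1$; first contact between $v_t$ and $u$ is ruled out in the interior by the strong maximum principle (as $\L v_t>f$ strictly, since $\alpha c_0\varepsilon\L\psi$ dominates the $O(\varepsilon^2)$ errors), on $F(v_t)$ by the Hopf lemma (since $\alpha^2-\beta^2=1$ together with $\p_n\psi>0$ near $F(v_t)$ yield the strict excess $|\nabla_A v_t^+|^2-|\nabla_A v_t^-|^2=\langle A\nabla p_t,\nabla p_t\rangle>1$), and on the inner sphere by the bulk estimate just obtained. Hence $u\geq v_1=U_\beta(x_n+c_0\varepsilon\psi)\geq U_\beta(x_n+c\varepsilon)$ on $B_{1/2}$, with $c=c_0\min_{B_{1/2}}\psi>0$.

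The main obstacle is the strict $\L$-subsolution inequality in \emph{both} phases at once: in $\Omega^-(v_t)$ the contribution $-\beta c_0\varepsilon\L\psi$ has the wrong sign whenever $\L\psi>0$, so a single radial $\psi$ cannot directly handle both phases. The standard remedy, used in \cite{DFS1}, is to work with two matched radial profiles $\psi_+,\psi_-$ that agree on $F(v_t)$ but satisfy $\pm\L\psi_\pm>0$ on their respective sides, preserving the continuity of $v_t$ and the strict free boundary excess computed above. A secondary technicality is that for $A\in C^{0,\bar\gamma}$, $\L x_n$ is only a distribution; under $\|A-I\|_{C^{0,\bar\gamma}}\leq\varepsilon^2$ its weak effect is of size $\varepsilon^2$, and is thus absorbable by the main strict inequalities. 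The symmetric upper-bound statement is proved by the same argument applied in the negative phase of $u$, with the corresponding sign changes.
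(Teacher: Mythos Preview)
Your two-step scheme (interior Harnack, then sliding barrier) is exactly the paper's strategy, but the ``main obstacle'' you flag rests on a sign error. In $\{p_t<0\}$ one has $U_\beta(p_t)=\beta p_t$ (since $U_\beta(s)=\alpha s^+-\beta s^-=\beta s$ for $s<0$, with the paper's convention $s^-=-\min\{s,0\}\ge 0$), so $\L v_t=\beta\,\L p_t$ there, and the $\L\psi$ contribution enters with the \emph{same} positive sign as in $\{p_t>0\}$. A single radial profile with $\Delta\psi>0$ therefore produces a strict subsolution in both phases simultaneously; this is precisely what the paper does (with the barrier centered at $\bar x$, not at a point below the hyperplane). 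Your proposed remedy of matched profiles with $\pm\L\psi_\pm>0$ would, on the contrary, force $\L v_t<0$ in the negative phase and destroy the subsolution property. This is also not what \cite{DFS1} does.

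The genuine technical point is the one you call ``secondary'': since $A\in C^{0,\bar\gamma}$ only, $\L x_n=\text{div}((A-I)e_n)$ is merely distributional, and the inequality $\L v_t>f$ cannot be read off pointwise. The paper does not try to absorb this as an $O(\varepsilon^2)$ error. Instead it adds to $v_t$ a correction $\phi_t^\pm$ solving $\L\phi_t^\pm=-\text{div}((A-I)\nabla v_t)$ in $\{v_t\gtrless0\}$ with zero data on $F(v_t)$; then $\psi_t:=v_t+\phi_t$ satisfies $\L\psi_t=\Delta v_t$ exactly, and $\Delta v_t\ge\beta c_0\varepsilon\,k(n)>0$ pointwise in both phases. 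Boundary Schauder and $L^\infty$ estimates give $\|\phi_t^+\|_{C^{1,\bar\gamma}}\le C\alpha\varepsilon^2$ and $\|\phi_t^-\|_{C^{1,\bar\gamma}}\le C\beta\varepsilon^2$, which guarantees $F(\psi_t)=F(v_t)$ and feeds into the strict free-boundary excess $|\nabla_A\psi_t^+|^2-|\nabla_A\psi_t^-|^2>1$. Your $\L$-harmonic replacement of $x_n$ is fine for the interior Harnack step, but for the barrier step the correction-term device is what makes the comparison rigorous.
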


\begin{proof}
We prove the first statement. The proof of the second statement is similar.
For notational simplicity we drop the sub-index $\beta$ from $U_\beta.$

Since $x_n >0$ in $B_{1/10}(\bar x)$ and $u \geq U$ in $B_1$ we get 
\begin{equation*}
B_{1/10}(\bar x) \subset B_1^+(u).
\end{equation*}

Thus $u-U \geq 0$ and solves $\mathcal{L }(u-U) =f - \mathcal{L}U$ in $%
B_{1/10}(\bar x)$. By Harnack inequality we obtain 
\begin{equation}  \label{HInew}
u(x) - U(x) \geq c(u(\bar x)- U(\bar x)) - C (\|f\|_{L^\infty} +
\|(A-I)\nabla U\|_{C^{0,\bar \gamma}}) \quad \text{in $\overline
B_{1/20}(\bar x)$}.
\end{equation}
From the assumptions \eqref{G} and \eqref{u-p>ep2} we conclude that (for $%
\varepsilon$ small enough) 
\begin{equation}  \label{u-p>cep}
u - U \geq \alpha c\varepsilon - C \alpha \varepsilon^2 \geq \alpha
c_0\varepsilon \quad \text{in $\overline B_{1/20}(\bar x)$}.
\end{equation}
This is the desired statement \eqref{u-p>ep2} in the ball $B_{1/20}(\bar x).$
We now work in annuli 
\begin{equation*}
D_r:= B_r(\bar x) \setminus \overline{B}_{1/20}(\bar x).
\end{equation*}

Let 
\begin{equation}
w=c(|x-\bar{x}|^{-\eta }-(3/4)^{-\eta }),\quad x\in \overline{D}_{7/8},
\label{w}
\end{equation}%
with $c$ such that 
\begin{equation*}
w=1\quad \text{on $\partial B_{1/20}(\bar{x})$,}
\end{equation*}%
and $\eta $ fixed larger than $n-2,$ so that 
\begin{equation}
\Delta w\geq k(\eta ,n)=k(n)>0,\quad \text{on $D_{7/8}$.}  \label{lapw}
\end{equation}
Notice that $w$ becomes negative outside $B_{3/4}(\bar x).$ %
%

Set $\bar w=1-w$ and for $t \geq 0,$ 
\begin{equation*}
v_t(x)= U(x_n - \varepsilon c_0 \bar w(x)+t\varepsilon), \quad x \in
\overline D_{7/8}.
\end{equation*}

Then,

\begin{equation*}
v_0(x)=U(x_n - \varepsilon c_0 \bar w(x)) \leq U(x) \leq u(x) \quad x \in
\overline D_{7/8}.
\end{equation*}

In particular 
\begin{equation}  \label{inclusion}
\{ v_{0} >0\} \subset \{u>0\}.
\end{equation}

Let $\bar t$ be the largest $t \geq 0$ such that 
\begin{equation}  \label{inclusion2}
\{ v_{t} >0\} \subset \{u>0\}.
\end{equation}




For any $t \leq \bar t$, call 
\begin{equation*}
D^+_{r,t}= \{v_{t} >0\} \cap D_{r}, D^-_{r,t}= \{v_{t} <0\} \cap D_{r}.
\end{equation*}
Let $\phi_{t}^\pm$ be the solution to 
\begin{equation}  \label{phit}
\begin{cases}
\mathcal{L }\phi_{t}^\pm = - \text{div}((A-I)\nabla v_{t})\quad \text{in $%
D^\pm_{7/8,t}$} \\ 
\phi_{t}^\pm=0 \quad \text{on $\partial D^\pm_{7/8,t}.$}%
\end{cases}%
\end{equation}

By assumption \eqref{G} and the boundary regularity estimates for divergence
form equations (see \cite{GT}) we get 
\begin{equation}  \label{estphi}
\|\phi_{t}^+\|_{C^{1,\bar \gamma}} \leq C\alpha \varepsilon^2, \quad \text{%
in $\overline{D^+_{6/7,t }}$} \quad \|\phi_{t}^-\|_{C^{1,\bar \gamma}} \leq
C\beta \varepsilon^2, \quad \text{in $\overline{D^-_{6/7,t }}$}
\end{equation}
and by $L^\infty$ estimates (see \cite{GT}) 
\begin{equation}  \label{linf}
\|\phi_{t}^+\|_{L^\infty} \leq C \alpha\varepsilon^2, \quad \text{in $%
\overline{D_{7/8,t}^+},$} \quad \|\phi_{t}^-\|_{L^\infty} \leq C
\beta\varepsilon^2, \quad \text{in $\overline{D_{7/8,t}^-}.$}
\end{equation}
Call 
\begin{equation*}
\phi_{t} = 
\begin{cases}
\phi_{t}^+ \text{in $\overline{D^+_{7/8,t}}$} \\ 
\phi^-_{t} \text{in $\overline{D^-_{7/8,t}}$} \\ 
\end{cases}%
\end{equation*}
Now set, 
\begin{equation*}
\psi_{t}= v_{t} + \phi_{t}.
\end{equation*}%
Since $w_n$ is bounded in the annulus $D_{7/8}$, we easily obtain that for $%
\varepsilon$ small enough

\begin{equation*}
(v_{t})_n \geq \alpha/2 \quad \text{in $D^+_{7/8,t }$} \quad (v_{t})_n \geq
\beta/2 \quad \text{in $D^-_{7/8,t }$}.
\end{equation*}

Thus, from the estimates above we conclude that 
\begin{equation*}
(\psi_{t})_n >0 \quad \text{in $\overline{D^+}_{6/7,t }\cup \overline{D^-}%
_{6/7,t}$}.
\end{equation*}
Hence, since $F(\psi_t)$ is a graph in the $e_n$ direction ($\varepsilon$
small), we get that 
\begin{equation*}
\{\psi_{t} >0\} \cap D_{6/7} = \{v_{t} >0\} \cap D_{6/7} = D^+_{6/7,t},
\end{equation*}
\begin{equation}  \label{2}
\{\psi_{t} <0\} \cap D_{6/7} = \{v_{t} <0\} \cap D_{6/7}=D^-_{6/7,t} ,
\end{equation}
\begin{equation}  \label{3}
F(\psi_{t}) \cap D_{6/7}= F(v_{ t}) \cap D_{6/7}.
\end{equation}
Moreover, in view of \eqref{lapw}$, \psi_{ t}$ solves 
\begin{equation}  \label{top}
\mathcal{L }\psi_{ t} = \Delta v_{ t}\geq \varepsilon \beta c_0 k(n)\quad 
\text{in $D^+_{7/8,t} \cup D^-_{7/8,t}$}.
\end{equation}
From \eqref{inclusion2}, \eqref{top} and assumption \eqref{G} we have

\begin{equation*}
\mathcal{L }\psi_{t} > \mathcal{L }u \quad \text{in $D^+_{7/8,t}.$}
\end{equation*}

If $t \leq \min\{c_0, \bar t\}$, using \eqref{u-p>cep} and the fact that $u
\geq U$ we obtain that

\begin{equation*}
\psi_{t}= v_{ t} \leq u \quad \text{on $\partial D^+_{7/8,t}.$}
\end{equation*}
Thus, by the maximum principle 
\begin{equation*}
\psi_{t} \leq u \quad \text{in $D^+_{7/8,t}.$}
\end{equation*}
In particular, 
\begin{equation}  \label{up}
\mathcal{L }\psi_{t} > f, \quad \psi_{t} \leq u \quad \text{in $\{\psi_{t}
>0\} \cap D_{6/7}.$}
\end{equation}

Analogously, 
\begin{equation*}
\mathcal{L }\psi_{t} > \mathcal{L }u \quad \text{in $\{u<0\} \cap D_{7/8,t}^-,$}
\end{equation*}
and using \eqref{2} 
\begin{equation*}
\psi_{t} \leq u \quad \text{on $F(u) \cap D^-_{6/7,t}$}.
\end{equation*}
Moreover, since $u \geq U$, using the definition of $v_{t}$ and the fact
that $w < - \tilde c < 0$ outside $B_{6/7}(\bar x)$, we have that 
\begin{equation}\label{help}
v_ {t} - u \leq -c \beta \varepsilon, \quad \text{%
in $(\overline{D^-_{7/8,t}} \setminus D_{6/7})$}.
\end{equation}
Thus, from the $L^\infty$ estimates \eqref{linf}, we obtain that (for $%
\varepsilon$ small enough)

\begin{equation*}
\psi_{t} <u \quad \text{in $\overline{D_{7/8,t}^-} \setminus D_{6/7}$}.
\end{equation*}

Now, by the maximum principle,

\begin{equation*}
\psi_{ t} \leq u, \quad \text{in $\{u<0\} \cap D^-_{6/7,t}.$}
\end{equation*}
In particular,

\begin{equation}  \label{down}
\mathcal{L }\psi_{t} > f,\quad \psi_{t} \leq u, \quad \text{in $\{\psi_{t}
<0\} \cap D_{6/7}.$}
\end{equation}

On the other hand, $\psi_{t}$ satisfies 
\begin{equation*}
|\nabla _A \psi_{ t}^+|^2 - |\nabla_A \psi_{ t}^-|^2 > 1 \quad \text{on $%
F(\psi_{t}) \cap D_{6/7}$}.
\end{equation*}
Indeed, by assumption \eqref{G} 
\begin{equation*}
|\nabla _A \psi_{ t}^+|^2 > (1-\varepsilon^2)|\nabla \psi_t^+|^2
\end{equation*}
and 
\begin{equation*}
|\nabla _A \psi_{ t}^-|^2 \leq (1+\varepsilon^2)|\nabla \psi_t^-|^2.
\end{equation*}
Furthermore, by the $C^{1,\bar \gamma}$ estimate \eqref{estphi} and
assumption \eqref{G}, we have that on $F(\psi_{t}) \cap D_{6/7}$, (recall $%
\alpha, \beta$ are bounded) 
\begin{equation*}
|\nabla \psi_{ t}^+|^2 \geq(1-\varepsilon^2) |\nabla v_{t}^+|^2 -
C^{\prime}\varepsilon^2, \quad |\nabla \psi_{ t}^-|^2 \leq
(1+\varepsilon^2)|\nabla v_{ t}^-|^2 + C^{\prime\prime}\varepsilon^2,
\end{equation*}
with, 
\begin{equation*}
|\nabla v_{t}^+|^2 = \alpha^2(1 + \varepsilon^2 c_0^2 |\nabla w|^2 +
2\varepsilon c_0 w_n), \quad |\nabla v_{t}^-|^2 = \beta^2(1 + \varepsilon^2
c_0^2 |\nabla w|^2 + 2\varepsilon c_0 w_n).
\end{equation*}

Hence, for $\bar C$ universal, 
\begin{equation*}
|\nabla _A \psi_{ t}^+|^2 - |\nabla_A \psi_{ t}^-|^2 \geq (1 + 2\varepsilon
c_0 w_n) - \bar C \varepsilon^2 \quad \text{on $F(\psi_{ t}) \cap D_{6/7}$}.
\end{equation*}

Using that $w_n \geq c>0$ on $F(\psi_{t}) \cap D_{6/7}$ we obtain the
desired claim by choosing $\varepsilon$ small enough.

In conclusion, $\psi_{t}$ is a strict subsolution in $D_{6/7}$ which lies
below $u$, if $t \leq \min\{\bar t, c_0\}$.

However, if $\bar t \leq c_0$, then $F(\psi_{\bar t})$ touches $F(u)$ in $%
D_{6/7}$, which is a contradiction. Indeed, for $t \leq c_0 $, $v_t < u$ on $%
(\overline{B_{7/8}(\bar x)} \setminus B_{6/7}(\bar x)) \cup \p B_{1/20}(\bar x)$ (see \eqref{u-p>cep}, \eqref{help} and its analogue in $\overline{D^+_{7/8}} \setminus D_{6/7}.$)  Since $\bar t$ is the largest $t \geq 0$ for which the
inclusion \eqref{inclusion2} holds, we deduce that $F(v_{\bar t})
\cap F(u) \neq \emptyset$ in $D_{6/7}$. It follows from \eqref{3} that $%
F(\psi_{\bar t}) \cap F(u) \neq \emptyset$ in $D_{6/7},$ as desired.

Thus $\bar t > c_0.$ In particular, for $t=c_0$, 
\begin{equation*}
\psi_{c_0} \leq u \quad \text{in $D_{6/7}.$}
\end{equation*}

Then we get the desired statement. Indeed, by the $L^\infty$ estimate we
obtain in $(B_{1/2} \cap \{\psi_{c_0}>0\}) \setminus B_{1/20}(\bar x)
\subset \subset D_{6/7}$%
\begin{equation*}
u(x) \geq \psi_{c_0}(x) \geq -C\alpha\varepsilon^2 + U(x_n + c_0 \varepsilon
w) \geq U(x_n + c\varepsilon)
\end{equation*}
and analogously, in $(B_{1/2} \cap \{\psi_{c_0}\leq 0\}) \setminus
B_{1/20}(\bar x) \subset \subset D_{6/7}$%
\begin{equation*}
u(x) \geq \psi_{c_0}(x) \geq -C\beta \varepsilon^2 + U(x_n + c_0 \varepsilon
w) \geq U(x_n + c\varepsilon)
\end{equation*}%
with $c$ universal. Above we used that $w \geq \bar c >0$ on $B_{1/2},$ and $%
\varepsilon$ is chosen sufficiently small.
\end{proof}









\subsection{Improvement of flatness}

We are now ready to prove the improvement of flatness lemma in the
non-degenerate case.

\begin{lem}[Improvement of flatness]
\label{improv1}Let $u$ satisfy 
\begin{equation}  \label{flat_1}
U_\beta(x_n -\varepsilon) \leq u(x) \leq U_\beta(x_n + \varepsilon) \quad 
\text{in $B_1,$} \quad 0\in F(u),
\end{equation}
with 
\begin{equation}  \label{ass1-ass2}
\|A-I\|_{C^{0,\bar \gamma}} \leq \varepsilon^2, \quad \|f\|_{L^\infty(B_1)}
\leq \varepsilon^2\beta.
\end{equation}

If $0<r \leq r_0$ for $r_0$ universal, and $0<\varepsilon \leq \varepsilon_0$
for some $\varepsilon_0$ depending on $r$, then 

\begin{equation}  \label{improvedflat_2_new}
U_{\beta^{\prime }}(x \cdot \nu_1 -r\frac{\varepsilon}{2}) \leq u(x) \leq
U_{\beta^{\prime }}(x \cdot \nu_1 +r\frac{\varepsilon }{2}) \quad \text{in $%
B_r,$}
\end{equation}
with $|\nu_1|=1,$ $|\nu_1 - e_n| \leq \tilde C\varepsilon$ , and $|\beta
-\beta^{\prime }| \leq \tilde C\beta \varepsilon$ for a universal constant $%
\tilde C.$
\end{lem}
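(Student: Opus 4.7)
The plan is the standard compactness/linearization contradiction argument, as in \cite{DFS1}, with the Harnack inequality of Theorem \ref{HI} playing the role of the compactness tool. Suppose the conclusion fails for some fixed small $r>0$ to be chosen later. Then there are sequences $\varepsilon_k\to 0$, $\beta_k\in[0,L]$, matrices $A_k$ with $\|A_k-I\|_{C^{0,\bar\gamma}}\le\varepsilon_k^2$, forcings $f_k$ with $\|f_k\|_\infty\le\varepsilon_k^2\beta_k$, and solutions $u_k$ of the corresponding problem with $0\in F(u_k)$ and $U_{\beta_k}(x_n-\varepsilon_k)\le u_k\le U_{\beta_k}(x_n+\varepsilon_k)$ in $B_1$, but for which no choice of $\nu_1,\beta'$ realizes \eqref{improvedflat_2_new}.

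The first step is to normalize via the two-phase linearization
\[
\tilde u_k(x)=\begin{cases}\dfrac{u_k(x)-\alpha_k x_n}{\alpha_k\varepsilon_k} & x\in B_1^+(u_k)\cup F(u_k),\\[3pt] \dfrac{u_k(x)-\beta_k x_n}{\beta_k\varepsilon_k} & x\in B_1^-(u_k),\end{cases}
\]
and to iterate the Harnack inequality (Theorem \ref{HI}, Corollary \ref{corollary}). Up to extracting a subsequence, $\beta_k\to\beta_\infty\in(0,L]$ (we are in the non-degenerate case, see Lemma \ref{finalcase} for the other regime) and the graphs of $\tilde u_k$ converge uniformly on compact subsets of $B_{1/2}\setminus\{x_n=0\}$ to the graph of a H\"older continuous function $\tilde u:B_{1/2}\to\mathbb R$.

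The second step is to identify $\tilde u$. Since $f_k/(\alpha_k\varepsilon_k)\to 0$ in $L^\infty$ and $\|A_k-I\|_\infty/\varepsilon_k\to 0$, writing the weak form of $\mathcal L_k u_k=f_k$ after subtracting $\mathcal L_k(\alpha_k x_n)$ or $\mathcal L_k(\beta_k x_n)$ and dividing by $\alpha_k\varepsilon_k$ or $\beta_k\varepsilon_k$ yields, in the limit, $\Delta\tilde u=0$ in $B_{1/2}^\pm$. The free boundary condition of \eqref{FBintro} passes to the limit by a viscosity argument: a smooth $\phi$ touching $\tilde u$ at an interface point is converted into a perturbation $U_{\beta_k}(x_n+\varepsilon_k\phi_k(x))$ of the two-plane solution, and expanding $|\nabla_{A_k}(U_{\beta_k}(\cdot+\varepsilon_k\phi_k))^{\pm}|^2=1$ to first order gives in the limit the transmission condition
\[
\alpha_\infty^2\,\partial_n\tilde u^+ -\beta_\infty^2\,\partial_n\tilde u^- = 0 \quad\text{on } \{x_n=0\}\cap B_{1/2}.
\]
Thus $\tilde u$ solves a linear two-phase transmission problem with fixed flat interface, which is known to be $C^\infty$ in the interior of each phase and $C^{1,\gamma}$ up to $\{x_n=0\}$ (see for example the analogous step in \cite{DFS1}).

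The third step is to close the contradiction. By the regularity just invoked there is a vector $\nu'\in\{x_n=0\}^\perp\oplus e_n$ with the property that
\[
\bigl|\tilde u^\pm(x)-\tilde u(0)-\nu'\cdot x\bigr|\le C r^{1+\gamma}\quad\text{in } B_r,
\]
with $C$ universal and a suitable compatibility between the two phases forced by the transmission condition (so that $\nu'_+$ and $\nu'_-$ come from a single tangent direction and a rescaled $\beta'$). Undoing the normalization gives, for $k$ large and $r\le r_0$ chosen so that $Cr^{\gamma}\le 1/4$, the inclusion \eqref{improvedflat_2_new} at the scale $r$ with an appropriate $\nu_1,\beta'$ satisfying the stated bounds, contradicting the failure of the conclusion for $u_k$.

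The main obstacle is the second step: rigorously extracting the linear transmission condition from the viscosity free boundary condition. One has to construct genuine $C^{1,\bar\gamma}$ test surfaces for $u_k$ out of smooth perturbations of $U_{\beta_k}$ (so that Definition \ref{defnhsol}(ii) applies), verify that these are strict subsolutions or supersolutions of the $\varepsilon_k$-scale free boundary condition after absorbing the $O(\varepsilon_k^2)$ errors from $A_k-I$ and $f_k$, and show that the touching point converges to the intended point on $\{x_n=0\}$. Once this linearization is established, the remainder of the scheme parallels Section 3 of \cite{DFS1}.
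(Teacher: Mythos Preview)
Your three-step outline (compactness via the Harnack inequality, identification of the linearized limit as a transmission problem, contradiction via the regularity of the limit) matches the paper's argument, and you correctly flag Step~2 as the place where all the work lies. Two points deserve comment.

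First, the test function you propose, $U_{\beta_k}(x_n+\varepsilon_k\phi_k(x))$, is the construction used in the \emph{Harnack} step (Lemma~\ref{main}), not in the improvement of flatness. In Step~2 the paper instead follows \cite{DFS1} and tests against functions of the form
\[
\phi_k(x)=a_k\Gamma_k^+(x)-b_k\Gamma_k^-(x)+\alpha_k(d_k^+)^2\varepsilon_k^{3/2}+\beta_k(d_k^-)^2\varepsilon_k^{3/2},
\]
where $\Gamma_k$ is a large-radius sphere barrier built from the fundamental solution and $d_k$ is signed distance to its zero level set. The quadratic polynomial $\tilde\phi=M+px_n^+-qx_n^-+BQ(x-y)$ that touches $\tilde u$ is what one recovers from $\phi_k$ after rescaling. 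This specific choice is what makes both the interior strict subsolution condition and the free boundary inequality verifiable by direct computation.

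Second, and more importantly, your phrase ``absorbing the $O(\varepsilon_k^2)$ errors from $A_k-I$'' hides the only genuinely new ingredient over \cite{DFS1}. Because the operator is in divergence form with merely $C^{0,\bar\gamma}$ coefficients, $\phi_k$ itself is \emph{not} a subsolution of $\mathcal L_k$: the term $\operatorname{div}((A_k-I)\nabla\phi_k)$ is not controlled pointwise. The paper's device is to solve
\[
\mathcal L_k g_k^\pm=-\operatorname{div}\bigl((A_k-I)\nabla\phi_k\bigr)\quad\text{in }B_1^\pm(\phi_k),\qquad g_k^\pm=0\quad\text{on }F(\phi_k),
\]
and to set $\gamma_k=\phi_k+g_k$. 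Then $\mathcal L_k\gamma_k=\Delta\phi_k$ exactly, and Schauder/$L^\infty$ estimates give $\|g_k^\pm\|_{C^{1,\bar\gamma}}\le C\varepsilon_k^2$, so $\gamma_k$ has the same free boundary as $\phi_k$ and the free boundary gradient condition can be checked with an $O(\varepsilon_k^2)$ error. Without this correction your Step~2 does not close: simply expanding $|\nabla_{A_k}U_{\beta_k}(\cdot+\varepsilon_k\phi_k)^\pm|^2$ handles the boundary condition but not the interior one for $\mathcal L_k$. Once $g_k$ is in place, the rest of your outline goes through exactly as in \cite{DFS1}.
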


\begin{proof}
The proof of this Lemma is divided into 3 steps.

\vspace{2mm}

\textbf{Step 1 -- Compactness.} Fix $r \leq r_0$ with $r_0$ universal (made
precise in Step 3). Assume by contradiction that we can find a sequence $%
\varepsilon_k \rightarrow 0$ and a sequence $u_k$ of solutions to 
\begin{equation*}
\begin{cases}
\mathcal{L}_k u_k := \text{div}(A_k\nabla u_k) = f_k \quad \text{in $%
B_1^+(u_k) \cup B_1^-(u_k)$} \\ 
|\nabla_{A_k} u_k^+|^2 - |\nabla_{A_k} u_k^-|^2=1 \quad \text{on $F(u_k)$}%
\end{cases}%
\end{equation*}
with 
\begin{equation}  \label{ak}
\|f_k\|_{L^\infty} \leq \beta_k\varepsilon_k^2, \quad \|A_k -I\|_{C^{0,\bar
\gamma}} \leq \varepsilon_k^2
\end{equation}
such that 
\begin{equation}  \label{flat_k}
U_{\beta_k}(x_n -\varepsilon_k) \leq u_k(x) \leq U_{\beta_k}(x_n +
\varepsilon_k) \quad \text{for $x \in B_1$, $0 \in F(u_k),$}
\end{equation}
with $L \geq \beta_k > 0$, but $u_k$ does not satisfy the conclusion %
\eqref{improvedflat_2_new} of the lemma.

Set ($\alpha_k^2=1+\beta_k^2$),%
\begin{equation*}
\tilde{u}_{k}(x)= 
\begin{cases}
\dfrac{u_k(x) - \alpha_k x_n}{\alpha_k \varepsilon_k}, \quad x \in
B_1^+(u_k) \cup F(u_k) \\ 
\  \\ 
\dfrac{u_k(x) - \beta_k x_n}{\beta_k\varepsilon_k}, \quad x \in B_1^-(u_k).%
\end{cases}%
\end{equation*}
Then \eqref{flat_k} gives, 
\begin{equation}  \label{flat_tilde**}
-1 \leq \tilde{u}_{k}(x) \leq 1 \quad \text{for $x \in B_1$}.
\end{equation}

From Corollary \ref{corollary}, it follows that the function $\tilde u_{k}$
satisfies 
\begin{equation}  \label{HC}
|\tilde u_{k}(x) - \tilde u_{k} (y)| \leq C |x-y|^\gamma,
\end{equation}
for $C$ universal and 
\begin{equation*}
|x-y| \geq \varepsilon_k/\bar\varepsilon, \quad x,y \in B_{1/2}.
\end{equation*}
From \eqref{flat_k} it clearly follows that $F(u_k)$ converges to $B_1 \cap
\{x_n=0\}$ in the Hausdorff distance. This fact and \eqref{HC} together with
Ascoli-Arzela give that as $\varepsilon_k \rightarrow 0$ the graphs of the $%
\tilde{u}_{k}$ converge (up to a subsequence) in the Hausdorff distance to
the graph of a H\"older continuous function $\tilde{u}$ over $B_{1/2}$.
Also, up to a subsequence 
\begin{equation*}
\beta_k \to \tilde \beta \geq 0
\end{equation*}
and hence 
\begin{equation*}
\alpha_k \to \tilde \alpha = \sqrt{1+\tilde \beta^2}.
\end{equation*}

\vspace{2mm}

\textbf{Step 2 -- Limiting Solution.} We show that $\tilde u$ solves the
linearized problem 
\begin{equation}  \label{Neumann}
\begin{cases}
\Delta \tilde u=0 & \text{in $B_{1/2} \cap \{x_n \neq 0\}$}, \\ 
\  &  \\ 
\tilde \alpha^2 (\tilde u_n)^+ - \tilde\beta^2 (\tilde u_n)^-=0 & \text{on $%
B_{1/2} \cap \{x_n =0\}$}.%
\end{cases}%
\end{equation}

Since 
\begin{equation*}
\Delta u_{k} = f_k - \text{div}((A_k-I)\nabla u_k) \quad \text{in $%
B_1^+(u_k) \cup B^-_1(u_k)$},
\end{equation*}
one easily deduces that $\tilde u$ is harmonic in $B_{1/2} \cap \{x_n \neq
0\}$.

Next, we prove that $\tilde u$ satisfies the boundary condition in %
\eqref{Neumann} in the viscosity sense.

Assume by contradiction that there exists a function $\tilde \phi$ of the
form 
\begin{equation*}
\tilde \phi(x) = M+ px_n^+- qx_n^- + B Q(x-y)
\end{equation*}
with 
\begin{equation*}
Q(x) = \frac 1 2 [(n-1)x_n^2 - |x^{\prime}|^2], \quad y=(y^{\prime },0),
\quad M \in \mathbb{R}, B >0
\end{equation*}
and 
\begin{equation*}
\tilde \alpha^2 p- \tilde \beta^2 q>0,
\end{equation*}
which touches $\tilde u$ strictly by below at a point $x_0= (x_0^{\prime },
0) \in B_{1/2}$.

As in \cite{DFS1}, let 
\begin{equation}  \label{biggamma}
\Gamma(x) = \frac{1}{n-2} [(|x^{\prime}|^2+ |x_n-1|^2)^{\frac{2-n}{2}} - 1]
\end{equation}
and let

\begin{equation}  \label{biggammak}
\Gamma_{k}(x)= \frac{1}{B\varepsilon_k}\Gamma(B\varepsilon_k(x-y)+ M B
\varepsilon_k^2 e_n).
\end{equation}

Now, call 
\begin{equation*}
\phi_k(x)= a_k \Gamma^+_k(x) - b_k \Gamma^-_k(x) + \alpha_k
(d_k^+(x))^2\varepsilon_k^{3/2} +\beta_k(d_k^-(x))^2\varepsilon_k^{3/2}
\end{equation*}
where 
\begin{equation*}
a_k=\alpha_k(1+\varepsilon_k p), \quad b_k=\beta_k(1+\varepsilon_k q)
\end{equation*}
and $d_k(x)$ is the signed distance from $x$ to $\partial B_{\frac{1}{%
B\varepsilon_k}}(y+e_n(\frac{1}{B\varepsilon_k}-M\varepsilon_k)).$




Now, call $g_k^\pm$ the solution to

\begin{equation*}
\begin{cases}
\mathcal{L}_k g_k^\pm = -\text{div}((A_k-I)\nabla \phi_k) \quad \text{in $%
B_1^\pm(\phi_k)$} \\ 
g_k=0 \quad \text{on $F(\phi_k).$}%
\end{cases}%
\end{equation*}

By $C^{1,\bar \gamma}$ and $L^\infty$ estimates, using the first one of %
\eqref{ass1-ass2} and the formula for $\phi_k$, we get
\begin{equation*}
\|g_k^+\|_{C^{1,\bar \gamma}} \leq C(a_k+\alpha_k) \varepsilon_k^2 \quad 
\text{in $\overline{B}_{7/8}^+(\phi_k),$} \quad \|g_k^-\|_{C^{1,\bar
\gamma}} \leq C(b_k+\beta_k) \varepsilon_k^2 \quad \text{in $\overline{B}%
_{7/8}^-(\phi_k)$,}
\end{equation*}
and 
\begin{equation*}
\|g_k^+\|_{L^\infty} \leq C(a_k+\alpha_k)\varepsilon_k^2, \quad \text{in $%
\overline{B_1}^+(\phi_k),$} \quad \|g_k^-\|_{L^\infty} \leq
C(b_k+\beta_k)\varepsilon_k^2, \quad \text{in $\overline{B_1}^-(\phi_k).$}
\end{equation*}

Set,

\begin{equation*}
\gamma_k^\pm := \phi_k^\pm + g_k^\pm,
\end{equation*}

and

\begin{equation*}
\tilde{\gamma}_{k}(x)= 
\begin{cases}
\tilde{\phi}_k + \dfrac{g_k^+}{\alpha_k \varepsilon_k}= \dfrac{\phi_k(x) -
\alpha_k x_n}{\alpha_k \varepsilon_k}+ \dfrac{g_k^+}{\alpha_k \varepsilon_k}%
, \quad x \in B_1^+(\phi_k) \cup F(\phi_k) \\ 
\  \\ 
\tilde{\phi}_k + \dfrac{g_k^-}{\beta_k\varepsilon_k}= \dfrac{\phi_k(x) -
\beta_k x_n}{\beta_k \varepsilon_k}+ \dfrac{g_k^-}{\beta_k\varepsilon_k}%
\quad x \in B_1^-(\phi_k).%
\end{cases}%
\end{equation*}
As shown in \cite{DFS1}, $\tilde \phi_k$ converges uniformly to $\tilde \phi$
on $B_{1/2}$. Thus, from the $L^\infty$ estimates, also $\tilde{\gamma}_k$
converges uniformly to $\tilde \phi$ on $B_{1/2}.$

Since $\tilde u_k$ converges uniformly to $\tilde u$ and $\tilde \phi$
touches $\tilde u$ strictly by below at $x_0$, we conclude that there exist
a sequence of constants $c_k \to 0$ and of points $x_k \to x_0$ such that
the function 
\begin{equation*}
\psi_k(x) = \gamma_k(\bar x), \quad \bar x = x+\varepsilon_k c_k e_n
\end{equation*}
touches $u_k$ by below at $x_k$. We will get a contradiction by proving that 
$\psi_k$ is a strict subsolution to our free boundary problem in $B_{7/8}$,
that is 
\begin{equation*}  \label{fbpsi*}
\left \{ 
\begin{array}{ll}
\mathcal{L}_k \psi_k > \varepsilon_k^2\beta_k \geq \|f_k\|_{\infty}, & %
\hbox{in $B_{7/8}^+(\psi_k) \cup B_{7/8}^-(\psi_k),$} \\ 
\  &  \\ 
|\nabla_{A_k} \psi_k^+|^2 - |\nabla_{A_k} \psi_k^-|^2 >1, & 
\hbox{on
$F(\psi_k)$.} \\ 
& 
\end{array}%
\right.
\end{equation*}

First, we show that

\begin{equation}  \label{equality}
B_{7/8}^\pm(\gamma_k) = B_{7/8}^\pm(\phi_k), \quad F(\gamma_k) \cap B_{7/8}
= F(\phi_k) \cap B_{7/8}.
\end{equation}

To this aim, notice that for $k$ large enough $\partial_n\Gamma^\pm_k \geq c
>0$ and $\partial_n (d_k^\pm)^2$ is bounded, thus 
\begin{equation*}
\partial_n \phi_k \geq ca_k -C \alpha_k \varepsilon_k^{3/2} \quad \text{in $%
B_1^+(\phi_k),$}
\end{equation*}
\begin{equation*}
\partial_n \phi_k \geq cb_k -C \beta_k \varepsilon_k^{3/2}\quad \text{in $%
B_1^-(\phi_k)$.}
\end{equation*}
These inequalities combined with the $C^{1,\bar \gamma}$ estimates for $g_k$
give that 
\begin{equation*}
\partial_n \gamma_k^\pm >0 \quad \text{in $\overline{B}_{7/8}^\pm(\phi_k)$ }.
\end{equation*}
Since $F(\phi_k)$ is a graph in the $e_n$ direction (for $k$ large), we
deduce the desired claim \eqref{equality}.

It is easily checked that away from the free boundary 
\begin{equation*}
\mathcal{L}_k \psi_k = \Delta \phi_k(\bar x) \geq \beta_k
\varepsilon_k^{3/2} \Delta d_k^2(\bar x)
\end{equation*}
and the first condition is satisfied for $k$ large enough.

Finally, since on the zero level set $|\nabla \Gamma_k|=1$ and $|\nabla
d^2_k|=0$ we have that

\begin{equation*}
(\phi_k^+)_\nu^2(\bar x) = a_k^2, \quad(\phi_k^-)_\nu^2(\bar x)= b_k^2.
\end{equation*}

By the second equation in \eqref{ak}, 
\begin{equation*}
|\nabla_{A_k} \psi_k^+|^2 - |\nabla_{A_k} \psi_k^-|^2 \geq
(1-\varepsilon_k)|\nabla \psi_k^+|^2 - (1+\varepsilon_k^2)|\nabla
\psi_k^-|^2.
\end{equation*}

Moreover, using the $C^{1,\bar \gamma}$ estimates for $g_k^\pm$ we get that

\begin{equation*}
|\nabla \psi_k^+|^2 \geq (1-\varepsilon_k^2)a_k^2 -
C^{\prime}\varepsilon_k^2, \quad |\nabla \psi_k^-|^2 \leq
(1+\varepsilon_k^2)b_k^2 +C^{\prime\prime}\varepsilon_k^2.
\end{equation*}

These, combined with the estimate above and the definition of $a_k$ and $b_k$
give that 
\begin{equation*}
|\nabla_{A_k} \psi_k^+|^2 - |\nabla_{A_k} \psi_k^-|^2 \geq 1+ (\alpha_k^2
p^2 - \beta_k^2 q^2)\varepsilon_k^2 +
2\varepsilon_k(\alpha_kp-\beta_kq)-C\varepsilon_k^2>1.
\end{equation*}
The last inequality holds for $k$ large in view of the fact that 
\begin{equation*}
\tilde \alpha^2 p -\tilde \beta^2 q >0.
\end{equation*}

\vspace{2mm}

\textbf{Step 3 -- Contradiction.} This step follows as in \cite{DFS1}.
\end{proof}

\section{Degenerate case}

In this section we prove the Harnack inequality and the improvement of
flatness lemma, in the so-called degenerate case. In this case, the negative
part of $u$ is negligible and the positive part is close to a one-plane
solution (i.e. $\beta=0$).

\subsection{Harnack inequality.}

We start with the Harnack inequality.

\begin{thm}[Harnack inequality]
\label{HIdg}There exists a universal constant $\bar \varepsilon$, such that
if $u$ satisfies at some point $x_0 \in B_2$

\begin{equation}
U_{0}(x_{n}+a_{0})\leq u^{+}(x)\leq U_{0}(x_{n}+b_{0})\quad \text{in $%
B_{r}(x_{0})\subset B_{2},$}  \label{oscdg}
\end{equation}%
with 
\begin{equation}  \label{G2}
\Vert u^{-}\Vert _{L^{\infty }}\leq \varepsilon ^{2},\quad \Vert f\Vert
_{L^{\infty }}\leq \varepsilon ^{4},\quad \Vert A-I\Vert _{C^{0,\bar \gamma
}}\leq \varepsilon ^{2}
\end{equation}%
and 
\begin{equation*}
b_{0}-a_{0}\leq \varepsilon r,
\end{equation*}%
for some $\varepsilon \leq \bar{\varepsilon},$ then 
\begin{equation*}
U_{0}(x_{n}+a_{1})\leq u^{+}(x)\leq U_{0}(x_{n}+b_{1})\quad \text{in $%
B_{r/20}(x_{0})$},
\end{equation*}%
with 
\begin{equation*}
a_{0}\leq a_{1}\leq b_{1}\leq b_{0},\quad b_{1}-a_{1}\leq (1-c)\varepsilon r,
\end{equation*}%
and $0<c<1$ universal.
\end{thm}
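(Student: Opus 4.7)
The plan is to adapt the proof of Lemma \ref{main} to the degenerate setting $\beta = 0$. By the scaling reduction of \cite{DFS1}, Theorem \ref{HIdg} follows from the one-sided pushing lemma: if $u^+ \geq U_0(x_n)$ in $B_1$, the smallness conditions \eqref{G2} hold, and $u^+(\bar x) \geq U_0(\bar x_n + \varepsilon)$ at $\bar x = \frac{1}{5}e_n$, then $u^+ \geq U_0(x_n + c\varepsilon)$ on $\overline B_{1/2}$ for a universal $c > 0$; the symmetric upper-bound statement is handled analogously.

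The initial Harnack step proceeds as in Lemma \ref{main}: $B_{1/10}(\bar x) \subset \{x_n > 0\}$ lies in the positive phase, so $u - U_0$ solves $\mathcal{L}(u - U_0) = f - \mathcal{L}U_0$ there, and the smallness \eqref{G2} combined with the classical Harnack inequality yields $u - U_0 \geq c_0\varepsilon$ on $\overline B_{1/20}(\bar x)$. On the annuli $D_r = B_r(\bar x) \setminus \overline B_{1/20}(\bar x)$, I use the same radial function $w$ of \eqref{w} satisfying \eqref{lapw}, set $\bar w = 1 - w$, and define the one-plane barrier family $v_t(x) = U_0(x_n - \varepsilon c_0 \bar w(x) + t\varepsilon)$. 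Correcting $v_t$ by $\phi_t$ solving $\mathcal{L}\phi_t = -\text{div}((A-I)\nabla v_t)$ in $\{v_t > 0\} \cap D_{7/8}$ with zero boundary data, the divergence-form estimates of \cite{GT} give $\|\phi_t\|_{C^{1,\bar\gamma}} + \|\phi_t\|_{L^\infty} \leq C\varepsilon^2$. Setting $\psi_t = v_t + \phi_t$ in $\{v_t > 0\}$ and zero outside, the same computation as in Lemma \ref{main} shows $\mathcal{L}\psi_t \geq c\varepsilon > \varepsilon^4 \geq \|f\|_\infty$ in $\{\psi_t > 0\}$, and on $F(\psi_t)$ the one-phase strict subsolution inequality $|\nabla_A \psi_t^+|^2 \geq 1 + c\varepsilon$ holds (since $w_n \geq c > 0$ there).

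Next I slide $t$ upward from $0$ keeping $\{\psi_t > 0\} \subset \{u > 0\}$; the maximum principle applied to $u - \psi_t$ (a supersolution in $\{\psi_t > 0\}$ with nonnegative boundary data) forces $\psi_t \leq u$ there, and interior touching in $\{u > 0\}$ is ruled out as in Lemma \ref{main}. The principal obstacle, absent in the non-degenerate case, is the touching on $F(u)$: since $\psi_t^- \equiv 0$ while $u^-$ may be positive, $\psi_t$ is not directly admissible as a test function in the two-phase viscosity definition. I handle this at a candidate touching point $x_0 \in F(\psi_{\bar t}) \cap F(u)$ by a direct argument: from $\psi_{\bar t}^+ \leq u^+$ in $\{\psi_{\bar t} > 0\}$ with common zero at $x_0$, the Hopf lemma applied to the positive phase gives $|\nabla_A \psi_{\bar t}^+(x_0)|^2 \leq |\nabla_A u^+(x_0)|^2 = 1 + |\nabla_A u^-(x_0)|^2$ by the free-boundary condition for $u$. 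On the other hand, the almost-monotonicity formula of \cite{MP} centered at $x_0$, together with $\|u^-\|_{L^\infty} \leq \varepsilon^2$ and interior gradient estimates yielding $|\nabla u^-| \lesssim \varepsilon^2$ at fixed scale, bounds the ACF functional by $C\varepsilon^4$, hence $|\nabla_A u^-(x_0)|^2 \leq C\varepsilon^4$ by monotonicity. This contradicts $|\nabla_A \psi_{\bar t}^+(x_0)|^2 \geq 1 + c\varepsilon$ for $\varepsilon$ small. Therefore $\bar t > c_0$, and concluding as in Lemma \ref{main}, $\psi_{c_0} \leq u$ on $D_{6/7}$ yields $u^+ \geq U_0(x_n + c\varepsilon)$ on $B_{1/2} \setminus B_{1/20}(\bar x)$; combined with the initial Harnack step, this gives the bound on $B_{1/2}$.
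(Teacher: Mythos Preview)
Your reduction to a one-sided pushing lemma and the initial Harnack step are fine, but the argument at the touching point has a genuine gap, and the paper handles the negative phase quite differently.

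The step that fails is the appeal to \cite{MP}. The almost-monotonicity formula there does not assert monotonicity of the ACF functional $\Phi(r)$; it asserts only a bound of the form $\Phi(r)\le C\bigl(1+\Phi(1)\bigr)$ for $r\le r_0$, with a constant $C$ depending on ellipticity and dimension but not going to zero with $\|A-I\|$ or $\|f\|_\infty$. So from $\Phi(1)\le C\varepsilon^4$ you only get $\Phi(0^+)\le C$, not $\Phi(0^+)\le C\varepsilon^4$, and the conclusion $|\nabla_A u^-(x_0)|^2\le C\varepsilon^4$ does not follow. A second, smaller issue is that you invoke $|\nabla_A u^+(x_0)|^2=1+|\nabla_A u^-(x_0)|^2$ pointwise; for a viscosity solution neither side is a priori defined at $x_0$, and one has to pass through the asymptotic expansions at right-regular points (the equivalence proved in Section~6) rather than a naive Hopf lemma.

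The paper avoids both difficulties by never using a one-phase barrier. First it establishes, by an elementary comparison with an $\mathcal L$-supersolution $z$ in $B_1\cap\{x_n<0\}$, the linear bound $u^-\le C\varepsilon^2\,x_n^-$ in $B_{19/20}$. Then it takes the two-phase family
\[
v_t(x)=\bigl(x_n-\varepsilon c_0\bar w+t\varepsilon\bigr)^+ \;-\; \varepsilon^2 C_1\bigl(x_n-\varepsilon c_0\bar w+t\varepsilon\bigr)^-,
\]
with $C_1$ chosen large relative to the constant in the bound on $u^-$. The small negative slope $\varepsilon^2 C_1$ guarantees $v_0\le u$ also where $u<0$, so after the $\phi_t^\pm$ correction the function $\psi_t$ is an admissible strict comparison subsolution in the two-phase sense: one checks $|\nabla_A\psi_t^+|^2-|\nabla_A\psi_t^-|^2>1$ directly, since $|\nabla v_t^-|^2=\varepsilon^4 C_1^2(1+O(\varepsilon))$ is negligible against the $2c_0\varepsilon w_n$ gain on the positive side. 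The sliding argument then goes through exactly as in Lemma~\ref{main}, with no monotonicity formula and no pointwise differentiability of $u$ at $F(u)$ needed.
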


We can argue as in the nondegenerate case and get the following result.

\begin{cor}
\label{corollary4}Let $u$ be as in Theorem $\ref{HIdg}$ satisfying %
\eqref{oscdg} for $r=1$. Then in $B_1(x_0)$ 
\begin{equation*}
\tilde u_\varepsilon:= \frac{u^+(x) - x_n}{\varepsilon}
\end{equation*}
has a H\"older modulus of continuity at $x_0$, outside the ball of radius $%
\varepsilon/\bar \varepsilon,$ i.e for all $x \in B_1(x_0)$, with $|x-x_0|
\geq \varepsilon/\bar\varepsilon$ 
\begin{equation*}
|\tilde u_\varepsilon(x) - \tilde u_\varepsilon (x_0)| \leq C |x-x_0|^\gamma.
\end{equation*}
\end{cor}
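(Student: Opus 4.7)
The plan is to deduce this corollary from Theorem~\ref{HIdg} by the standard dyadic iteration that turns a Harnack-type improvement into a H\"older modulus of continuity. This is exactly the route followed in Corollary~\ref{corollary} and in \cite{DFS1}; the only thing that changes in the degenerate regime is the precise form of $\tilde u_\varepsilon$, and this does not obstruct the iteration.

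Starting from the assumed sandwich at scale $r=1$, I would apply Theorem~\ref{HIdg} iteratively at the scales $r_k:=20^{-k}$, producing monotone sequences $a_0 \leq a_k \leq b_k \leq b_0$ with
\begin{equation*}
U_{0}(x_{n}+a_{k}) \leq u^{+}(x) \leq U_{0}(x_{n}+b_{k}) \quad \text{in } B_{r_{k}}(x_0), \qquad b_k - a_k \leq (1-c)^k \varepsilon.
\end{equation*}
The $k$-th step is legitimate precisely when the relative flatness $(b_k-a_k)/r_k = (20(1-c))^k \varepsilon$ is still bounded by $\bar\varepsilon$, so the iteration is valid up to the largest index $k_0$ with $r_{k_0}$ comparable to $\varepsilon/\bar\varepsilon$. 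This pins down the cutoff scale in the statement. The remaining hypotheses \eqref{G2} are inherited by the rescaled solutions because $\|A-I\|_{C^{0,\bar\gamma}}$, $\|u^-\|_{L^\infty}$ and $\|f\|_{L^\infty}$ all scale favourably (or at worst neutrally) under $x \mapsto x_0 + r_k x$, while the required smallness parameters $\varepsilon_k^2, \varepsilon_k^4$ only increase with $k$.

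To convert this geometric decay into the claimed H\"older bound, given $x \in B_1(x_0)$ with $|x-x_0| \geq \varepsilon/\bar\varepsilon$, I would pick $k \leq k_0$ with $r_{k+1} \leq |x-x_0| < r_k$. In $B_{r_k}(x_0)$ the sandwich pins $\tilde u_\varepsilon(y) \in [a_k/\varepsilon,\, b_k/\varepsilon]$ whenever $u^+(y)>0$ (directly from the sandwich), and also whenever $u^+(y)=0$, since in that case $y_n \in [-b_k,-a_k]$ (the only regime compatible with $U_0(y_n+a_k)\leq 0$) and $\tilde u_\varepsilon(y) = -y_n/\varepsilon$. Hence $\operatorname{osc}_{B_{r_k}(x_0)} \tilde u_\varepsilon \leq (1-c)^k$, and with the universal choice $\gamma := \log(1/(1-c))/\log 20$ one obtains $|\tilde u_\varepsilon(x) - \tilde u_\varepsilon (x_0)| \leq C|x-x_0|^\gamma$, as required.

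The only genuinely new point compared with the non-degenerate case of Corollary~\ref{corollary} is the case analysis at the free boundary in the previous paragraph: because $\beta=0$ here, the definition of $\tilde u_\varepsilon$ used in the statement is potentially ambiguous on $\{u^+=0\}$, and one has to check by hand that its values there still lie in the controlled interval. I expect this to be the main (minor) obstacle; everything else should be a verbatim translation of the non-degenerate argument.
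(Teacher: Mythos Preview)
Your approach is exactly the one the paper intends: it explicitly says ``We can argue as in the nondegenerate case'' and gives no further details, so the dyadic iteration of Theorem~\ref{HIdg} that you describe is precisely the expected proof. The scaling check for hypothesis \eqref{G2} (the thresholds $\varepsilon_k^2,\varepsilon_k^4$ only loosen as $\varepsilon_k$ increases) and the stopping criterion at scale comparable to $\varepsilon/\bar\varepsilon$ are both correct.

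There is one slip in your case analysis. When $u^+(y)=0$, the inequality $U_0(y_n+a_k)\le 0$ only yields $y_n\le -a_k$; it does \emph{not} give $y_n\ge -b_k$, since $0\le (y_n+b_k)^+$ is vacuous. Hence your claim that $\tilde u_\varepsilon(y)\in[a_k/\varepsilon,b_k/\varepsilon]$ fails for points in the interior of $\{u^+=0\}$, where $-y_n/\varepsilon$ can be of order $1/\varepsilon$. The fix is simply to restrict the oscillation bound to $B_1^+(u)\cup F(u)$, which is in any case the only set on which $\tilde u_\varepsilon$ is used in Lemma~\ref{improv4_deg}. On $F(u)$ the missing bound $y_n\ge -b_k$ \emph{does} hold: any $y\in F(u)$ is a limit of points $y'$ with $u^+(y')>0$, and for those the sandwich forces $y'_n+b_k>0$. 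With this restriction your argument goes through verbatim.
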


As before, the proof of the Harnack inequality can be deduced from the
following lemma.

\begin{lem}
\label{main2}There exists a universal constant $\bar{\varepsilon}>0$ such
that if \eqref{G2} holds and $u$ satisfies 
\begin{equation*}
u^{+}(x)\geq U_{0}(x),\quad \text{in $B_{1}$}
\end{equation*}%
%
%
%
%
%
%
%
%
for some $\varepsilon \leq \bar{\varepsilon},$and at $\bar{x}=\dfrac{1}{5}%
e_{n}$ 
\begin{equation}
u^{+}(\bar{x})\geq U_{0}(\bar{x}_{n}+\varepsilon ),  \label{u-p>ep2d}
\end{equation}%
then 
\begin{equation}
u^{+}(x)\geq U_{0}(x_{n}+c\varepsilon ),\quad \text{in $\overline{B}_{1/2},$}
\end{equation}%
for some $0<c<1$ universal. Analogously, if 
\begin{equation*}
u^{+}(x)\leq U_{0}(x),\quad \text{in $B_{1}$}
\end{equation*}%
and 
\begin{equation*}
u^{+}(\bar{x})\leq U_{0}(\bar{x}_{n}-\varepsilon ),
\end{equation*}%
then 
\begin{equation*}
u^{+}(x)\leq U_{0}(x_{n}-c\varepsilon ),\quad \text{in $\overline{B}_{1/2}.$}
\end{equation*}
\end{lem}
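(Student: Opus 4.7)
The plan is to adapt the proof of Lemma \ref{main} to the degenerate setting $\beta = 0$, using the same annular barrier construction but with the one-plane obstacle $v_t(x) = U_0(x_n - \varepsilon c_0 \bar w(x) + t\varepsilon)$ whose negative phase is empty. The main novelties are that there is no negative-phase computation to carry out and that we must control the small but nontrivial $u^-$ only through the assumption $\|u^-\|_\infty \leq \varepsilon^2$ from \eqref{G2}, which is negligible compared to the gain $c_0 \varepsilon$ in the positive phase.

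First I would establish the analogue of \eqref{u-p>cep} near $\bar x = \frac{1}{5}e_n$. Since $\bar x_n = 1/5$ and $u^+ \geq x_n^+$ in $B_1$, the ball $B_{1/10}(\bar x)$ sits inside $\{u > 0\}$ and $u - x_n \geq 0$ solves $\mathcal{L}(u - x_n) = f + \text{div}((I - A)e_n)$ there. The classical interior Harnack inequality together with \eqref{G2} and \eqref{u-p>ep2d} yields $u(x) - x_n \geq c\varepsilon - C\varepsilon^2 \geq c_0 \varepsilon$ in $\overline B_{1/20}(\bar x)$.

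Next, working in the annular region $D_{7/8} = B_{7/8}(\bar x) \setminus \overline B_{1/20}(\bar x)$ with $w$ as in \eqref{w} and $\bar w = 1 - w$, I would introduce the family $v_t$ and let $\bar t$ be the largest $t \geq 0$ for which $\{v_t > 0\} \subset \{u > 0\}$. Letting $\phi_t$ solve $\mathcal{L}\phi_t = -\text{div}((A - I)\nabla v_t)$ in $D^+_{7/8,t} := \{v_t > 0\} \cap D_{7/8}$ with zero boundary data, the usual $L^\infty$ and $C^{1,\bar\gamma}$ estimates give $\|\phi_t\|_{C^{1,\bar\gamma}} + \|\phi_t\|_{L^\infty} \leq C\varepsilon^2$. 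Setting $\psi_t := v_t + \phi_t$ on $D^+_{7/8,t}$ and extending by zero, the same calculation as in Lemma \ref{main} shows $\mathcal{L}\psi_t \geq c\varepsilon k(n) > \|f\|_\infty$ in the positive phase, and on $F(\psi_t) \cap D_{6/7}$ the free boundary gradient satisfies $|\nabla_A \psi_t^+|^2 - |\nabla_A \psi_t^-|^2 = |\nabla_A \psi_t^+|^2 \geq 1 + 2\varepsilon c_0 w_n - C\varepsilon^2 > 1$, so $\psi_t$ is a strict subsolution.

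To compare $\psi_t$ and $u$ on $\partial D^+_{7/8,t}$ for $t \leq c_0$: on $\partial B_{1/20}(\bar x)$ (where $\bar w = 0$) use the first step; on $\partial B_{7/8}(\bar x) \cap \{v_t > 0\}$ use that $\bar w \geq 1 + \tilde c$ so $v_t \leq x_n - c\varepsilon \leq u$; and on $F(v_t) \cap D_{7/8}$ use that $\overline{\{v_t > 0\}} \subset \overline{\{u > 0\}}$ forces $u \geq 0 = \psi_t$ by continuity. The maximum principle then gives $\psi_t \leq u$ in $D^+_{7/8,t}$. If $\bar t \leq c_0$, a standard continuity argument produces a contact point $z \in F(v_{\bar t}) \cap F(u) \cap D_{6/7}$ where $\psi_{\bar t}$ touches $u$ by below, and the strict inequality above contradicts the viscosity subsolution property of $u$ (noting that $\psi_{\bar t}$ is a valid $C^{1,\bar\gamma}$ test function with $\psi_{\bar t}^- \equiv 0$, permitted by Definition \ref{defnhsol}). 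Hence $\bar t > c_0$, and the resulting inequality $\psi_{c_0} \leq u$ together with $w \geq \bar c > 0$ on $B_{1/2}$ gives $u^+(x) \geq U_0(x_n + c\varepsilon)$ on $\overline B_{1/2}$. The main obstacle is the boundary comparison on $F(v_t) \cap D_{7/8}$, where $\psi_t = 0$ and $u$ could be slightly negative; this is resolved by passing the defining inclusion to the closure, which forces $u \geq 0$ on that piece by continuity, so the $O(\varepsilon^2)$ negative part of $u$ plays no role in the maximum principle applied on $D^+_{7/8,t} \subset \{u > 0\}$.
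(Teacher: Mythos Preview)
Your argument has a genuine gap at the contradiction step. You extend $\psi_t$ by zero outside $\{v_t>0\}$ and then claim that $\psi_{\bar t}$ touches $u$ from below at the contact point $z\in F(v_{\bar t})\cap F(u)$. But ``touching from below'' in Definition~\ref{defnhsol} requires $\psi_{\bar t}\le u$ in a \emph{full} neighborhood of $z$, not only in $D^+_{7/8,\bar t}$. Since $z\in F(u)$ and the hypothesis is merely $\|u^-\|_{L^\infty}\le\varepsilon^2$ (not $u^-\equiv 0$), there may be points arbitrarily close to $z$ where $u<0$; by the inclusion $\{u\le 0\}\subset\{v_{\bar t}\le 0\}$ these points lie in the region where you set $\psi_{\bar t}=0$, so there $\psi_{\bar t}=0>u$. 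Your final paragraph claims the issue is ``resolved by passing the defining inclusion to the closure, which forces $u\ge 0$ on that piece'', but this only gives $u\ge 0$ \emph{on} $F(v_t)$, not on the open set $\{v_{\bar t}\le 0\}$ just beyond it, which is exactly where the comparison fails.

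The paper repairs this by building a small but nontrivial negative phase into the barrier: one takes
\[
v_t(x)=(x_n-\varepsilon c_0\bar w+t\varepsilon)^+-\varepsilon^2 C_1(x_n-\varepsilon c_0\bar w+t\varepsilon)^-,
\]
and first proves the pointwise estimate $u^-\le C\varepsilon^2 x_n^-$ in $B_{19/20}$ via comparison with a solution of $\mathcal L z=-\varepsilon^4$ in $B_1\cap\{x_n<0\}$. Choosing $C_1$ large relative to this $C$ forces $v_t\le u$ on the negative side as well, so the touching is genuine. The corresponding $\phi_t^-$ then has size $O(\varepsilon^4)$, the free boundary inequality becomes $|\nabla_A\psi_t^+|^2-|\nabla_A\psi_t^-|^2\ge 1+2\varepsilon c_0 w_n-C\varepsilon^2>1$ as before (the $\varepsilon^2 C_1$ contribution to $|\nabla\psi_t^-|$ is harmless), and the rest of your outline goes through.
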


\begin{proof}
We prove the first statement. We use the same notation as in Lemma \ref{main}%
.

Since $x_n >0$ in $B_{1/10}(\bar x)$ and $u^+ \geq U_0$ in $B_1$ we get 
\begin{equation*}
B_{1/10}(\bar x) \subset B_1^+(u).
\end{equation*}

Thus $u-x_{n}\geq 0$ and solves $\mathcal{L}(u-x_{n})=f-\text{div}%
((A-I)e_{n})$ in $B_{1/10}(\bar{x})$ and we can apply Harnack inequality and
the assumptions \eqref{G2} and \eqref{u-p>ep2d} to obtain that (for $%
\varepsilon $ small enough) 
\begin{equation}
u-x_{n}\geq c_{0}\varepsilon \quad \text{in $\overline{B}_{1/20}(\bar{x})$}.
\label{u-p>cep2}
\end{equation}%
Let $w$ be as in the proof of Lemma \ref{main} and $\bar w =1-w$. Set, for $%
t\geq 0,$ 
\begin{equation*}
v_{t}(x)=(x_{n}-\varepsilon c_{0}\bar w +t\varepsilon )^{+}-\varepsilon
^{2}C_{1}(x_{n}-\varepsilon c_{0}\bar w +t\varepsilon )^{-},\quad x\in 
\overline{B}_{3/4}(\bar{x}).
\end{equation*}%
Here $C_{1}$ is a universal constant to be made precise later. We claim that 
\begin{equation*}
v_{0}(x)\leq u(x)\quad x\in \overline{B}_{3/4}(\bar{x}).
\end{equation*}

This is readily verified in the set where $u$ is non-negative using that $%
u^{+}\geq x_{n}^{+}.$ To prove our claim in the set where $u$ is negative we
wish to use the following fact: 
\begin{equation}
u^{-}\leq Cx_{n}^{-}\varepsilon ^{2},\quad \text{in $B_{\frac{19}{20}}$, $C$
universal}.  \label{negu}
\end{equation}%
This estimate is easily obtained using that $\{u<0\}\subset \{x_{n}<0\},$ $%
\Vert u^{-}\Vert _{\infty }<\varepsilon ^{2}$ and the comparison principle
with the function $z$ satisfying 
\begin{equation*}
\mathcal{L}z=-\varepsilon ^{4}\quad \text{in $B_{1}\cap \{x_{n}<0\}$},\quad
z=u^{-}\quad \text{on $\partial (B_{1}\cap \{x_{n}<0\})$.}
\end{equation*}%
Thus our claim immediately follows from the Lipschitz continuity of $z$ in $%
B_{19/20}\cap \{x_{n}<0\}$, the fact that $u^{-}\leq z$ and that, for $%
x_{n}<0$ and a suitable $C_{1}>C,$ 
\begin{equation*}
\varepsilon ^{2}C_{1}(x_{n}-\varepsilon c_{0}\bar w (x))\leq
Cx_{n}\varepsilon ^{2}.
\end{equation*}

Let $\bar{t}$ be the largest $t\geq 0$ such that, 
\begin{equation}
\{v_{t}>0\}\subset \{u>0\}\quad \text{in $\overline{D}_{7/8}.$}
\label{inclusion3}
\end{equation}%
%
%
%
%
%
%
%
%
%
%

From now on, the proof follows the lines of the non-degenerate case. Let $%
\phi _{t}^\pm$ and $D^\pm_{r,t}$ be as in \eqref{phit}, with $t \leq \bar t.$

Then, by assumption \eqref{G2} and the boundary regularity estimates for
divergence form equations we get 
\begin{equation}
\Vert \phi _{t}^+\Vert _{C^{1,\bar \gamma }} \leq C\varepsilon ^{2},\quad 
\text{in $\overline{D^{+}}_{6/7,t}$} \quad \Vert \phi _{t}^-\Vert
_{C^{1,\bar \gamma }} \leq C\varepsilon ^{4}\quad \text{in $\overline{D^{-}}%
_{6/7,t}$}  \label{esphi2}
\end{equation}%
and by $L^{\infty }$ estimates 
\begin{equation}
\Vert \phi^+ _{t}\Vert _{L^{\infty }} \leq C\varepsilon ^{2},\quad \text{in $%
\overline{D_{7/8,t}^{+}},$} \quad \Vert \phi^- _{t}\Vert _{L^{\infty }} \leq
C\varepsilon ^{4},\quad \text{in $\overline{D_{7/8,t}^{-}}.$}  \label{linf2}
\end{equation}
Call 
\begin{equation*}
\phi_{t} = 
\begin{cases}
\phi_{t}^+ \text{in $\overline{D^+_{7/8,t}}$} \\ 
\phi^-_{t} \text{in $\overline{D^-_{7/8,t}}.$} \\ 
\end{cases}%
\end{equation*}
Set 
\begin{equation*}
\psi _{t}=v_{t}+\phi _{t}.
\end{equation*}%
Since $w_{n}$ is bounded in the annulus $D_{7/8}$, we easily obtain that for 
$\varepsilon $ small enough, 
\begin{eqnarray*}
(v_{t})_{n} &\geq &c_{2}>0\text{ }\ \ \ \ \ \ \text{in }\overline{%
D_{7/8,t}^{+}} \\
(v_{t})_{n} &\geq &c_{3}\varepsilon ^{3}>0\quad \text{in $\overline{%
D_{7/8,t}^{-}}.$}
\end{eqnarray*}
Thus, from the $C^{1,\bar\gamma}$ estimates above we conclude that 
\begin{eqnarray*}
(\psi _{t})_{n} &\geq &c_{4}>0\text{ }\ \ \ \ \ \ \text{in }\overline{D^{+}}%
_{6/7,t} \\
(\psi _{t})_{n} &\geq &c_{5}\varepsilon ^{3}>0\quad \text{in }\overline{D^{-}%
}_{6/7,t}\text{$.$}
\end{eqnarray*}%
Hence, since $F(\psi _{t})$ is a graph in the $e_{n}$ direction for $%
\varepsilon $ small, we get that 
\begin{equation*}
\{\psi _{t}>0\}\cap \overline{D}_{6/7}=\{v_{t}>0\}\cap \overline{D}_{6/7},
\end{equation*}%
\begin{equation}
\{\psi _{t}<0\}\cap \overline{D}_{6/7}=\{v_{t}<0\}\cap \overline{D}_{6/7},
\label{fb1}
\end{equation}%
\begin{equation}
F(\psi _{t})\cap \overline{D}_{6/7}=F(v_{t})\cap \overline{D}_{6/7}.
\label{fb2}
\end{equation}%
Moreover, $\psi _{t}$ solves 
\begin{equation}
\mathcal{L}\psi _{t}=\Delta v_{t}\quad \text{in $D_{7/8,t}^{+}\cup
D_{7/8,t}^{-}$},  \label{int}
\end{equation}
and by assumption \eqref{G2} 
\begin{equation*}
\Delta v_{t}\geq \varepsilon ^{3}c_{0}k(n)>\varepsilon ^{4}\geq \Vert f\Vert
_{\infty },\quad \text{in $D_{7/8,t}^{+}\cup D_{7/8,t}^{-}$}
\end{equation*}%
for $\varepsilon $ small enough.

Let $t \leq \min\{\bar t, c_0\}$. Since $u^{+}\geq x_{n}^{+}$
and $w \leq -c <0$ in $\overline{D_{7/8,t}^{+}}\backslash D_{6/7}$, we have%
\begin{equation*}
v_{t}\left( x\right) =x_{n}-c_{0}\varepsilon \bar w +t\varepsilon \leq
x_{n}+c_{0}\varepsilon w \leq u - c\varepsilon.
\end{equation*}%
Hence, by the $L^\infty$ estimate \eqref{linf2} we have $\psi _{t}<u$
in $\overline{D_{7/8,t}^{+}}\backslash D_{6/7}$ for $\varepsilon $ small$.$

Moreover, since $\mathcal{L}\psi _{{t}}\geq f,$ the maximum principle
gives $\psi _{t} \leq u$ in $\overline{D}_{7/8,t}^{+}$ (using also \eqref{u-p>cep2}).

In $D_{7/8,t}^{-}\backslash \overline{D}_{6/7}\cap \left\{ u<0\right\}
,$ using (\ref{negu}), since $\{u<0\}\subset \{x_{n}<0\}$, \begin{equation*}
v_{{t}}\left( x\right) -u=-\varepsilon ^{2}C_{1}(x_{n}-c_{0}\varepsilon
\bar w +{t}\varepsilon )^{-}+u^{-}\leq -c\varepsilon ^{3}
\end{equation*}%
if $C_{1}$ is chosen large enough. Hence, by the $L^{\infty }$ estimates for 
$\phi _{t},$ we have $\psi _{{t}}<u$ in $\overline{D_{7/8,t}^{-}}\backslash 
D_{6/7}\cap \left\{ u<0\right\} $ for $\varepsilon $ small$.$
Again by maximum principle we infer $\psi _{t}\leq u,$ in $\{u<0\}\cap
D_{6/7,t}^{-}.$ 

In particular,%
\begin{equation*}
\mathcal{L}\psi _{t}>f,\quad \psi _{\bar{t}}\leq u,\quad \text{in }\{\psi
_{t}<0\}\cap \overline{D}_{6/7}
\end{equation*}%
Summarizing: if $t \leq \min\{\bar t, c_{0}\}$ then $\psi _{t}\leq u,$ in $%
\overline{D}_{6/7}$ and $v _{\bar{t}}<u$ in $\overline{D}_{7/8}\backslash 
D_{6/7}$ and on $\partial B_{1/20}\left( \bar{x}\right) $ by (\ref%
{u-p>cep2}).

On the other hand, using the $C^{1,\bar \gamma}$ estimates for $\phi_t^\pm$
and that on $F(v_{t})\cap D_{7/8}$ 
\begin{equation*}
|\nabla v_{t}^{+}|^2=(1+\varepsilon ^{2}c_{0}^{2}|\nabla w
|^{2}+2\varepsilon c_{0}w_n ), \quad |\nabla v_{t}^{-}|^2=\varepsilon
^{4}C_{1}^{2}(1+\varepsilon ^{2}c_{0}^{2}|\nabla w |^{2}-2\varepsilon
c_{0}w_n),
\end{equation*}
we conclude as in Lemma \ref{main} that 
\begin{equation*}
|\nabla_A\psi _{t}^{+}|^{2}- |\nabla_A \psi _{t}^{-}|^{2}>1\quad \text{on $%
F(\psi_{t})\cap $}\overline{D}_{6/7}.
\end{equation*}
We thus reach a contradiction as in the non-degenerate case, unless $\bar{t}%
>c_{0}.$

In particular, 
\begin{equation}
\psi _{c_0}\leq u\quad \text{in }D_{6/7}  \label{c_0}
\end{equation}
and we can write, by the $L^{\infty }$ estimate for $\phi _{c_0},$ in $%
B_{1/2}\subset \subset $$\overline{D}_{6/7}$ 
\begin{eqnarray*}
u^{+}(x) &\geq &\psi _{c_0}^{+}(x)\geq -C\varepsilon ^{2}+\left(
x_{n}-\varepsilon c_{0}\psi + c_0\varepsilon \right) ^{+} \\
&=&-C\varepsilon ^{2}+\left( x_{n}+\varepsilon c_0 w \right) ^{+} \\
&\geq &-C\varepsilon ^{2}+\left( x_{n}+\varepsilon \bar c \right) ^{+} \\
\end{eqnarray*}%
where we used that $w \geq c >0$ on $B_1/2.$ Since $u^{+}\left( x\right)
\geq x_{n}^{+}$ we infer, if $\varepsilon $ is small, 
\begin{equation*}
u^{+}(x)\geq \left( x_{n}+c\varepsilon \right) ^{+},
\end{equation*}%
with $c$ universal.
\end{proof}

\subsection{Improvement of flatness}

We are now ready to prove the improvement of flatness lemma in the
degenerate case.

\begin{lem}
\label{improv4_deg}Let $u$ satisfy
\begin{equation}
U_{0}(x_{n}-\varepsilon )\leq u^{+}(x)\leq U_{0}(x_{n}+\varepsilon )\quad 
\text{in $B_{1},$}\quad 0\in F(u),  \label{flat***}
\end{equation}%
with 
\begin{equation*}
\Vert f\Vert _{L^{\infty }(B_{1})}\leq \varepsilon ^{4},\Vert A-I\Vert
_{C^{0,\bar \gamma }}\leq \varepsilon ^{2}
\end{equation*}%
and 
\begin{equation*}
\Vert u^{-}\Vert _{L^{\infty }(B_{1})}\leq \varepsilon ^{2}.
\end{equation*}

If $0<r\leq r_{1}$ for $r_{1}$ universal, and $0<\varepsilon \leq
\varepsilon _{1}$ for some $\varepsilon _{1}$ depending on $r$, then 

\begin{equation}
U_{0}(x\cdot \nu _{1}-r\frac{\varepsilon }{2})\leq u^{+}(x)\leq U_{0}(x\cdot
\nu _{1}+r\frac{\varepsilon }{2})\quad \text{in $B_{r},$}
\label{improvedflat_2}
\end{equation}%
with $|\nu _{1}|=1,$ $|\nu _{1}-e_{n}|\leq C\varepsilon $ for a universal
constant $C.$
\end{lem}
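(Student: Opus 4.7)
The argument follows the standard compactness/linearization scheme used in the non-degenerate case (Lemma \ref{improv1}), adapted to the fact that the negative phase is now of size $\varepsilon^2$ and therefore negligible at the linear order. I would split the proof into three steps.

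\textbf{Step 1 (Compactness).} Suppose by contradiction that the statement fails: take sequences $\varepsilon_k \to 0$, solutions $u_k$ to $(\ref{FBintro})$ with coefficient matrices $A_k$ satisfying $\|A_k-I\|_{C^{0,\bar\gamma}}\le \varepsilon_k^2$, right-hand sides $\|f_k\|_\infty\le\varepsilon_k^4$, and $\|u_k^-\|_\infty\le\varepsilon_k^2$, flat at level $\varepsilon_k$ as in \eqref{flat***}, but failing \eqref{improvedflat_2} for a fixed $r\le r_1$ to be chosen in Step 3. Rescale in the positive phase,
\begin{equation*}
\tilde u_k(x) \;=\; \frac{u_k^+(x)-x_n}{\varepsilon_k}, \qquad x\in B_1^+(u_k)\cup F(u_k).
\end{equation*}
From \eqref{flat***} one has $-1\le\tilde u_k\le 1$. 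Applying Corollary \ref{corollary4} iteratively gives a universal Hölder estimate for $\tilde u_k$ away from $F(u_k)$ at scales larger than $\varepsilon_k/\bar\varepsilon$; since $F(u_k)\to\{x_n=0\}$ in Hausdorff distance, Ascoli--Arzelà yields (along a subsequence) uniform convergence of the graphs of $\tilde u_k$ to the graph of a Hölder function $\tilde u$ on $\overline{B_{1/2}}\cap\{x_n\ge 0\}$.

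\textbf{Step 2 (Limiting equation).} I would show $\tilde u$ solves
\begin{equation*}
\begin{cases}
\Delta \tilde u = 0 & \text{in } B_{1/2}\cap\{x_n>0\},\\
\tilde u_n = 0 & \text{on } B_{1/2}\cap\{x_n=0\},
\end{cases}
\end{equation*}
in the viscosity sense. The interior equation follows since $\Delta u_k = f_k-\mathrm{div}((A_k-I)\nabla u_k)$ and both terms are $o(\varepsilon_k)$ after rescaling. The boundary condition is the delicate step and will be the main obstacle. Suppose that a smooth polynomial test function $\tilde\phi(x)=M+p\,x_n^+ + B\,Q(x-y)$ with $p>0$ and $Q$ the standard harmonic polynomial from \eqref{biggamma} touches $\tilde u$ strictly from below at $x_0=(x_0',0)$. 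Mimicking Step 2 of Lemma \ref{improv1}, I would construct
\begin{equation*}
\phi_k(x) \;=\; a_k\, \Gamma_k^+(x) \;-\; C_1\varepsilon_k^2\, \Gamma_k^-(x) \;+\; (d_k^+)^2 \varepsilon_k^{3/2} \;+\; C_1\varepsilon_k^2 (d_k^-)^2 \varepsilon_k^{3/2},
\end{equation*}
with $a_k=1+\varepsilon_k p$ and $\Gamma_k$, $d_k$ as in \eqref{biggammak}, correct for the non-constant matrix $A_k$ by solving $\mathcal{L}_k g_k^\pm = -\mathrm{div}((A_k-I)\nabla\phi_k)$ with zero boundary data on $F(\phi_k)$, and set $\gamma_k=\phi_k+g_k$. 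The crucial feature, characteristic of the degenerate case, is the factor $\varepsilon_k^2 C_1$ scaling the negative part of the barrier: exactly as in Lemma \ref{main2}, this ensures that the negative part of $\gamma_k$ lies below $u_k^-$ (use $\|u_k^-\|_\infty\le\varepsilon_k^2$ and comparison on $B_1\cap\{x_n<0\}$ with the auxiliary function controlled by $x_n^-\varepsilon_k^2$), while the positive part, after a vertical shift $\psi_k(x)=\gamma_k(x+c_k\varepsilon_k e_n)$, touches $u_k$ from below. One checks that $\psi_k$ is a strict subsolution: in the bulk, $\mathcal{L}_k\psi_k \ge \varepsilon_k^{3/2}\Delta d_k^2 > \varepsilon_k^4 \ge \|f_k\|_\infty$; on $F(\psi_k)$, since on the zero level set $|\nabla \Gamma_k|=1$ and $|\nabla d_k^2|=0$, one computes
\begin{equation*}
|\nabla_{A_k}\psi_k^+|^2-|\nabla_{A_k}\psi_k^-|^2 \;\ge\; a_k^2 - C\varepsilon_k^2 \;=\; 1+2\varepsilon_k p - O(\varepsilon_k^2) \;>\; 1
\end{equation*}
for $k$ large (the negative-side contribution is $O(\varepsilon_k^4)$ and absorbed). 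This contradicts the touching, proving $\tilde u_n\le 0$ on $\{x_n=0\}$; a symmetric argument for touching from above with $p<0$ yields $\tilde u_n\ge 0$. Together, $\tilde u_n=0$.

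\textbf{Step 3 (Contradiction).} The function $\tilde u$ is harmonic in $B_{1/2}\cap\{x_n>0\}$ with homogeneous Neumann condition on $\{x_n=0\}$, hence extends to a harmonic function on all of $B_{1/2}$ by even reflection and satisfies a $C^2$ Taylor expansion at $0$,
\begin{equation*}
|\tilde u(x) - \tilde u(0) - x'\cdot\nabla_{x'}\tilde u(0)| \;\le\; C_0\, r^2 \quad \text{in } B_r,
\end{equation*}
for some universal $C_0$. Choose $r_1$ so that $C_0 r_1 \le 1/4$ and let $\nu_1$ be the unit vector obtained by tilting $e_n$ by $\varepsilon_k\nabla_{x'}\tilde u(0)$ (normalized); then uniform convergence $\tilde u_k\to\tilde u$ and the definition of $\tilde u_k$ translate the expansion back into \eqref{improvedflat_2} at scale $r$ for all large $k$, yielding the desired contradiction. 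The same step gives $|\nu_1-e_n|\le C\varepsilon$ with $C$ universal.
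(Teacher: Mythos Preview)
Your proposal is correct and follows essentially the same three-step compactness/linearization scheme as the paper. The only noteworthy difference is in the construction of the negative-phase part of the barrier in Step~2: you build it directly into $\phi_k$ via a term $-C_1\varepsilon_k^2\,\Gamma_k^-$ (mirroring the degenerate Harnack construction of Lemma~\ref{main2}), whereas the paper takes $\phi_k$ to be one-sided, $\phi_k = a_k\Gamma_k^+ + (d_k^+)^2\varepsilon_k^2$, and only after locating the touching point $x_k\in F(u_k)$ introduces a local auxiliary function $W_k$ solving $\mathcal{L}_k W_k=-1$ in a small annulus to define the full two-phase test function $\Psi_k$. Both routes achieve the same end; the paper's local construction makes the verification that the test function lies below $u_k$ on the negative side slightly cleaner (since $W_k$ is tailored to dominate $u_k^-$ by comparison in $N_\rho\setminus\mathcal{B}_k$), while your global construction requires you to check that after the shift by $c_k\varepsilon_k e_n$ the inequality $\psi_k\le u_k$ still holds on $\{u_k<0\}$ near $x_k$---a point you should make explicit, since the touching coming from the convergence $\tilde\gamma_k\to\tilde\phi$ is only established on $B_1^+(u_k)\cup F(u_k)$.
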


\begin{proof}
We argue similarly as in the non-degenerate case.

\vspace{2mm}

\textbf{Step 1 -- Compactness.} Fix $r\leq r_{1}$ with $r_{1}$ universal
(made precise in Step 3). Assume by contradiction that we can find a
sequence $\varepsilon _{k}\rightarrow 0$ and a sequence $u_{k}$ of solutions
to 
\begin{equation*}
\begin{cases}
\mathcal{L}_{k}u_{k}:=\text{div}(A_{k}\nabla u_{k})=f_{k}\quad \text{in $%
B_{1}^{+}(u_{k})$}\cup B_{1}^{-}(u_{k}) \\ 
(u_{k}^{+})_{\nu }^{2}-(u_{k}^{-})_{\nu }^{2}=1\quad \text{on $F(u_{k})$}%
\end{cases}%
\end{equation*}%
\begin{equation*}
\Vert u_{k}^{-}\Vert _{L^{\infty }(B_{1})}\leq \varepsilon_k^{2}
\end{equation*}%
with 
\begin{equation*}
\Vert f_{k}\Vert _{L^{\infty }}\leq \varepsilon _{k}^{4},\quad \Vert
A_{k}-I\Vert _{C^{1,\bar \gamma }}\leq \varepsilon _{k}^{2}
\end{equation*}%
such that 
\begin{equation}
U_{0}(x_{n}-\varepsilon _{k})\leq u_{k}^{+}(x)\leq U_{0}(x_{n}+\varepsilon
_{k})\quad \text{for $x\in B_{1}$, $0\in F(u_{k}),$}  \label{kstrip}
\end{equation}%
but $u_{k}$ does not satisfy the conclusion of the lemma.

Set 
\begin{equation*}
\tilde{u}_{k}(x)=\dfrac{u_{k}(x)-x_{n}}{\varepsilon _{k}},\quad x\in
B_{1}^{+}(u_{k})\cup F(u_{k})
\end{equation*}%
Then (\ref{kstrip}) gives, 
\begin{equation}
-1\leq \tilde{u}_{k}(x)\leq 1\quad \text{for $x\in B_{1}^{+}(u_{k})\cup
F(u_{k})$}.  \label{flat_tilde*}
\end{equation}

As in the non-degenerate case, it follows from Corollary \ref{corollary4}
that as $\varepsilon _{k}\rightarrow 0$ the graphs of the $\tilde{u}_{k}$
converge (up to a subsequence) in the Hausdorff distance to the graph of a H%
\"{o}lder continuous function $\tilde{u}$ over $B_{1/2}\cap \{x_{n}\geq 0\}$.

\medskip

\textbf{Step 2 -- Limiting Solution.} We now show that $\tilde{u}$ solves
the following Neumann problem 
\begin{equation}
\begin{cases}
\Delta \tilde{u}=0 & \text{in $B_{1/2}\cap \{x_{n}>0\}$}, \\ 
\  &  \\ 
\partial _{n}\tilde{u}=0 & \text{on $B_{1/2}\cap \{x_{n}=0\}$}.%
\end{cases}
\label{Neumann4}
\end{equation}

As before, the interior condition follows easily. Thus we focus on the
boundary condition.

Let $\tilde{\phi}$ be a function of the form 
\begin{equation*}
\tilde{\phi}(x)=M+px_{n}+BQ(x-y)
\end{equation*}%
with 
\begin{equation*}
Q(x)=\frac{1}{2}[(n-1)x_{n}^{2}-|x^{\prime }|^{2}],\quad y=(y^{\prime
},0),\quad M\in \R,B>0
\end{equation*}%
and 
\begin{equation*}
p>0.
\end{equation*}%
Then we must show that $\tilde{\phi}$ cannot touch $u$ strictly by below at
a point $x_{0}=(x_{0}^{\prime },0)\in B_{1/2}$. Suppose that such a $\tilde{%
\phi}$ exists and let $x_{0}$ be the touching point.

Let $\Gamma_k$ be as in the proof of the non-degenerate case (see %
\eqref{biggammak}). Call

\begin{equation*}
\phi_k(x)= a_k \Gamma^+_k(x)+ (d_k^+(x))^2\varepsilon_k^2, \quad
a_k=(1+\varepsilon_k p)
\end{equation*}
where $d_k(x)$ is the signed distance from $x$ to $\partial B_{\frac{1}{%
B\varepsilon_k}}(y+e_n(\frac{1}{B\varepsilon_k}-M\varepsilon_k)).$

Now, call $g_{k}$ the solution to

\begin{equation*}
\begin{cases}
\mathcal{L}_{k}g_{k}=-\text{div}((A_{k}-I)\nabla \phi _{k})\quad \text{in $%
B_{1}^{+}(\phi _{k})$} \\ 
g_{k}=0\quad \text{on $\partial B_{1}^{+}(\phi _{k})$}%
\end{cases}%
\end{equation*}%
extended by zero to $B_{1}^{-}(\phi _{k})$

By $C^{1,\bar\gamma }$ and $L^{\infty }$ estimates, using the first one of %
\eqref{ass1-ass2} and the formula for $\phi _{k}$, we get

\begin{equation*}
\Vert g_{k}\Vert _{C^{1,\bar\gamma }}\leq C(a_{k}+1)\varepsilon
_{k}^{2}\quad \text{in $\overline{B}_{7/8}^{+}(\phi _{k})$,}
\end{equation*}%
\begin{equation*}
\|g_k\|_{L^\infty} \leq C(a_{k}+1)\varepsilon _{k}^{2}\quad \text{in $%
\overline{B}_{1}^{+}(\phi _{k})$.}
\end{equation*}
Set 
\begin{equation*}
\tilde{\phi}_{k}(x)=\dfrac{\phi _{k}(x)-x_{n}}{\varepsilon _{k}},
\end{equation*}
\begin{equation*}
\gamma_k := \phi_k + g_k,
\end{equation*}
and%
\begin{equation*}
\tilde{\gamma}_{k}(x)=\tilde{\phi}_{k}+\dfrac{g_{k}}{\varepsilon _{k}}=%
\dfrac{\phi _{k}(x)-x_{n}}{\varepsilon _{k}}+\dfrac{g_{k}}{\varepsilon _{k}}.
\end{equation*}

As shown in \cite{DFS1}, (the graph of) $\tilde{\phi}_{k}$ converges
uniformly to (the graph of) $\tilde{\phi}$ on $B_{1/2}\cap \left\{
x_{n}>0\right\} $.

Thus, from the $L^{\infty }$ estimates for $g_{k}$, also $\tilde{\gamma}_{k}$
converges uniformly to $\tilde{\phi}$ on $B_{1/2}.$ Since $\tilde{u}_{k}$
converges uniformly to $\tilde{u}$ and $\tilde{\phi}$ touches $\tilde{u}$
strictly by below at $x_{0} $, we conclude that there exist a sequence of
constants $c_{k}\rightarrow 0$ and of points $x_{k}\rightarrow x_{0}$ such
that the function 
\begin{equation*}
\psi _{k}(x)=\gamma _{k}(x+\varepsilon _{k}c_{k}e_{n})
\end{equation*}%
touches $u_{k}$ by below at $x_{k}\in B_{1}^{+}(u_{k})\cup F(u_{k})$. We
claim that $x_{k}$ cannot belong to $B_{1}^{+}(u_{k})$. Otherwise, in a
small neighborhood $N$ of $x_{k}$ we would have that 
\begin{equation*}
\mathcal{L}_{k}\psi _{k}=\Delta \phi _{k}>\varepsilon _{k}^{4}\geq f_{k}=%
\mathcal{L}_{k}u_{k},\quad \text{$\psi _{k}<u_{k}$ in $N\setminus
\{x_{k}\},\psi _{k}(x_{k})=u_{k}(x_{k}),$}
\end{equation*}%
a contradiction.

Thus $x_{k}\in F(u_{k})\cap \partial B_{\frac{1}{B\varepsilon _{k}}}(y+e_{n}(%
\frac{1}{B\varepsilon _{k}}-M\varepsilon _{k}-\varepsilon _{k}c_{k})).$ For
simplicity we call 
\begin{equation*}
\mathcal{B}_{k}:=B_{\frac{1}{B\varepsilon _{k}}}(y+e_{n}(\frac{1}{%
B\varepsilon _{k}}-M\varepsilon _{k}-\varepsilon _{k}c_{k})).
\end{equation*}%
Let $N_{\rho }$ be a small neighborhood of $x_{k}$ of size $\rho $. Since 
\begin{equation*}
\Vert u_{k}^{-}\Vert _{\infty }\leq \varepsilon _{k}^{2},\quad u_{k}^{+}\geq
(x_{n}-\varepsilon _{k})^{+},
\end{equation*}%
as in the proof of Harnack inequality, using the fact that $x_{k}\in
F(u_{k})\cap \partial \mathcal{B}_{k}$ we can conclude by the comparison
principle that

\begin{equation*}
u_{k}^{-}\left( x\right) \leq c\varepsilon _{k}^{2}W_{k},\quad \text{in }%
N_{\rho }\backslash \mathcal{B}_{k}
\end{equation*}%
where $\mathcal{L}_kW_{k}=-1$ in $N_{\rho }\backslash \mathcal{B}_{k}\,,$ $%
W_k=0$ on $N_{\rho }\cap \partial \mathcal{B}_{k},W_k=1$ on $\partial
N_{\rho }\backslash \mathcal{B}_{k}.$

Let 
\begin{equation}
\Psi _{k}(x)=\left\{ 
\begin{array}{ll}
\psi _{k}(x) & \text{ \ \ in }\mathcal{B}_{k}\cap N_{\rho } \\ 
-\varepsilon _{k}^{2}W_k & \text{ \ \ in }N_{\rho }\backslash \mathcal{B}_{k}%
\end{array}%
\right.  \label{Psi}
\end{equation}

Then $\mathcal{L}_{k}\Psi _{k}\geq f_{k}$ in $\mathcal{B}_{k}\cap N_{\rho }$
and $N_{\rho }\backslash \mathcal{B}_{k}.$

We reach a contradiction if we show that 
\begin{equation*}
\mid \nabla_{A_k}\Psi _{k}^{+}\mid^{2}-\mid \nabla_{A_k}\Psi _{k}^{-}\mid
^{2}>1,\quad \hbox{on
$F(\Psi_k)$.}
\end{equation*}
Since 
\begin{equation*}
\mid \nabla_{A_k}\Psi _{k}^{+}\mid^{2}-\mid \nabla_{A_k}\Psi _{k}^{-}\mid ^{2} \geq
(1-\varepsilon_k^2)|\nabla \Psi_k^+|^2-(1+\varepsilon_k^2)|\nabla
\Psi_k^-|^2,
\end{equation*}
this follows from the formula for $\Psi_k$ for $k$ large enough, because $p>0.$ We finally reached a
contradiction.

\medskip

\textbf{Step 3 -- Contradiction.} In this step we can argue as in the final
step of the proof of Lemma 4.1 in \cite{D}.
\end{proof}

\section{The proof of Theorem \protect\ref{Lipmainvar}}

In this section we provide the proof of Theorem \ref{Lipmainvar}. The proof
follows via a blow-up argument and our flatness Theorem \ref{flatmain2}, as
in Section 6.2 of \cite{DFS1}. The extra ingredient in that argument was the
regularity theory developed by Caffarelli in \cite{C1} in the homogeneous
case. Here we provide a different proof of that result, based on a Weiss
type monotonicity formula and our flatness Theorem \ref{flatmain2}. The same
strategy has been employed in \cite{DS}.

Precisely, we have the following result.

\begin{thm}
\label{new_reg} \label{Lipmainvar2} Let $u$ be a viscosity solution to 
\begin{equation}  \label{fbvisco}
\begin{array}{ll}
\Delta u = 0, & \hbox{in $B_1(u^+) \cup B_1(u^-),$} \\ 
\  &  \\ 
(u_\nu^+)^2 - (u_\nu^-)^2= 1, & \hbox{on $F(u)$} \\ 
& 
\end{array}%
\end{equation}
with $0\in F(u)$. If $F(u)$ is a Lipschitz graph in $B_1$, then $F(u)$ is $%
C^{1,\gamma}$ in $B_{1/2}$, with norm controlled by a universal constant.
\end{thm}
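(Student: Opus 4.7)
The strategy is a blow-up / compactness / flatness scheme which, unlike the argument in \cite{DFS1}, replaces the appeal to \cite{C1} by Weiss and ACF monotonicity. Fix any $x_0 \in F(u) \cap B_{1/2}$; after translating we may assume $x_0 = 0$. Since $u$ is globally Lipschitz (cf.~the remark after Definition \ref{defsubcv}) and $F(u)$ is a Lipschitz graph with a fixed constant $L_0$, the rescalings $u_r(x) := u(rx)/r$ are uniformly Lipschitz, satisfy the same free boundary problem in $B_{1/r}$, and their free boundaries remain Lipschitz graphs through the origin with the same constant $L_0$. Extract a locally uniform limit $u_r \to u_0$: then $u_0$ is a global viscosity solution of \eqref{fbvisco} with $0 \in F(u_0)$ and $F(u_0)$ a Lipschitz graph in all of $\mathbb{R}^n$.

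\textbf{Step 1 (Homogeneity via Weiss).} Introduce the Weiss energy
\begin{equation*}
W(r,u) := \frac{1}{r^n}\int_{B_r}\bigl(|\nabla u|^2 + \chi_{\{u>0\}}\bigr)\,dx \; - \; \frac{1}{r^{n+1}}\int_{\partial B_r} u^2\,d\sigma.
\end{equation*}
A direct computation using the equations $\Delta u = 0$ in $B_r^\pm(u)$ and the transmission condition $(u^+_\nu)^2 - (u^-_\nu)^2 = 1$ on $F(u)$ yields
\begin{equation*}
\frac{d}{dr}W(r,u) \;=\; \frac{2}{r^{n+2}}\int_{\partial B_r}\bigl(x\cdot\nabla u - u\bigr)^2\,d\sigma \;\geq\; 0,
\end{equation*}
so $W(\cdot,u)$ is nondecreasing and equals a constant on $(0,R)$ iff $u$ is positively $1$-homogeneous in $B_R$. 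Passing to the limit along $u_r \to u_0$ via the scaling identity $W(\rho s,u) = W(s,u_\rho)$, we find that $W(\cdot,u_0)$ is constant on $(0,\infty)$; hence $u_0$ is a positively $1$-homogeneous global solution.

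\textbf{Step 2 (Classification).} If $u_0^- \not\equiv 0$ the ACF functional
\begin{equation*}
\Phi(r,u_0) \;=\; \frac{1}{r^4}\left(\int_{B_r}\frac{|\nabla u_0^+|^2}{|x|^{n-2}}\,dx\right)\left(\int_{B_r}\frac{|\nabla u_0^-|^2}{|x|^{n-2}}\,dx\right)
\end{equation*}
is finite (by global Lipschitz), nondecreasing, and constant in $r$ by $1$-homogeneity of $u_0$. The equality case of \cite{ACF} forces $u_0^\pm(x) = \alpha^\pm (x\cdot\nu)^\pm$ for a common unit $\nu$, and the FB condition gives $(\alpha^+)^2 - (\alpha^-)^2 = 1$, so $u_0 = U_\beta(x\cdot\nu)$ for some $\beta \geq 0$. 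If instead $u_0^- \equiv 0$, then $u_0$ is a nonnegative $1$-homogeneous harmonic function in the open cone $\mathcal{C} = \{u_0 > 0\}$, whose boundary is a Lipschitz $1$-homogeneous graph and on which $|\nabla u_0^+| = 1$; nondegeneracy (Lemma \ref{deltand}) together with Hopf on the upper half-sphere forces $\mathcal{C}$ to be a half-space and $u_0 = (x\cdot\nu)^+$, i.e.~$u_0 = U_0(x\cdot\nu)$. In every case $u_0 = U_\beta(x\cdot\nu)$ with $0 \leq \beta \leq L_0$.

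\textbf{Step 3 (Flatness + Theorem \ref{flatmain2}).} By Step 2 and local uniform convergence, for every $\delta > 0$ there exists a universal $r_\delta > 0$ (independent of $x_0$, because all constants depend only on $L_0$ and $n$) such that
\begin{equation*}
\{x\cdot\nu \leq -\delta\}\cap B_1 \;\subset\; B_1\cap\{u_{r_\delta}^+=0\} \;\subset\; \{x\cdot\nu \leq \delta\}\cap B_1.
\end{equation*}
Taking $\delta = \bar\delta$ of Theorem \ref{flatmain2} (applied with $A\equiv I$ and $f\equiv 0$, for which its hypotheses are automatic) gives that $F(u_{r_\delta})$ is $C^{1,\gamma}$ in $B_{1/2}$; rescaling back yields $C^{1,\gamma}$ regularity of $F(u)$ in a universal neighborhood of $x_0$, with norm controlled universally. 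Covering $F(u) \cap B_{1/2}$ by finitely many such neighborhoods concludes the proof.

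\textbf{Expected obstacle.} The most delicate part is Step 2 in the degenerate regime $u_0^-\equiv 0$, where ACF gives no information: the argument there reduces to showing that a nonnegative one-homogeneous harmonic function vanishing on a one-homogeneous Lipschitz graph, with unit interior gradient, must be linear. A second subtlety is verifying that the identity in Step 1 for $\frac{d}{dr}W$ is legitimate across the free boundary in the viscosity sense, which requires either the almost-everywhere $C^{1,\bar\gamma}$ regularity of $F(u)$ at the reduced part (cf.~\cite{DFS5}) or a regularization argument as in \cite{DS}.
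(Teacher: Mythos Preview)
Your overall scheme --- Weiss monotonicity to obtain homogeneous blow-ups, ACF to handle the genuinely two-phase case, then the flatness Theorem~\ref{flatmain2} --- is precisely the paper's route, and you have correctly flagged the two delicate points. But your proposed resolutions of those points are where the gaps lie.

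In Step~2, your treatment of the degenerate case $u_0^-\equiv 0$ does not work as written: neither Lemma~\ref{deltand} (a nondegeneracy statement under a flatness hypothesis) nor a ``Hopf on the upper half-sphere'' argument by itself rules out nontrivial Lipschitz one-phase cones. The paper proves this classification (its Theorem ``all Lipschitz cones are trivial'') by an entirely different mechanism: a dimension-reduction argument (blow-up at any nonzero $x_0\in F(U)$ yields a cone in $\mathbb{R}^{n-1}$) reduces the question to cones that are smooth away from the origin; for such a cone, the Boundary Harnack Inequality gives monotonicity of $U$ in the full cone of directions $\{\xi_n \geq L|\xi'|\}$. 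One then picks $\tau$ on the boundary of this cone tangent to $F(U)$ at some $X_0\neq 0$, so that $U_\tau\geq 0$ in $\{U>0\}$; either $U_\tau\equiv 0$ (and one reduces dimension, invoking induction) or $U_\tau>0$, whence Hopf gives $U_{\tau\nu}(X_0)>0$, contradicting $U_\nu\equiv 1$ on $F(U)$. This tangential-derivative contradiction is the missing idea in your sketch.

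In Step~3, the claim that $r_\delta$ is ``independent of $x_0$, because all constants depend only on $L_0$ and $n$'' is not justified: the blow-up limit $u_0$, its direction $\nu$, and above all the \emph{rate} at which $u_r$ approaches $u_0$ depend a priori on the base point and on the chosen subsequence. The paper separates this out explicitly: first it shows $F(u)$ is $C^{1,\gamma}$ in \emph{some} $B_\sigma$ with $\sigma=\sigma(u)$, then runs a second compactness argument --- if no uniform lower bound on $\sigma$ existed, a sequence $u_k$ with $\sigma_k\to 0$ would subconverge to some $u_*$, whose free boundary is $C^{1,\gamma}$ near $0$ by the first part, a contradiction. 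Finally, for the Weiss formula itself, the paper justifies the integration by parts by first noting (via Theorem~\ref{flatmain2} applied at a.e.\ point of the Lipschitz graph $F(u)$) that the free boundary condition holds $\mathcal{H}^{n-1}$-a.e., and then checking that $u$ is a critical point of $E$ for domain variations, after which Weiss's original computation applies; your direct computation of $\frac{d}{dr}W$ needs exactly the same a.e.\ information to be made rigorous.
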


Let 
\begin{equation*}
E(u, r) : = \int_{B_r} (|\nabla u|^2 + \alpha^2\chi_{\{u>0\}} +
\beta^2\chi_{\{u<0\}}) dx, \quad \alpha^2-\beta^2=1,
\end{equation*}
and define 
\begin{equation}  \label{mon_phi}
\Phi_u(r) := r^{-n} E(u,r) - r^{-1-n}\int_{\partial B_r} u^2 d\mathcal H^{n-1}.
\end{equation}

The proof of Theorem \ref{new_reg} is based on a Weiss-Type monotonicity
formula for the function $\Phi_u$. In the case when $u$ is a critical point
for the energy functional $E$ (with respect to domain variations), then the
proof of this formula is contained in \cite{W}. In our context, we need a
formula for viscosity solutions. Before the proof we remark that the
rescaling 
\begin{equation*}
u_\lambda(X) := \lambda^{-1} u (\lambda X)
\end{equation*}
satisfies 
\begin{equation}  \label{rescaledmon}
\Phi_{u_\lambda}(r) = \Phi_u(\lambda r).
\end{equation}

\begin{thm}
Let $u$ be a viscosity solution to \eqref{fbvisco} in $B_1$ and assume that $%
F(u)$ is a Lipschitz graph. Then $\Phi_u(r)$ is monotone increasing for $0<r
\leq 1.$ Moreover $\Phi_u$ is constant if and only if $u$ is homogeneous of
degree 1.
\end{thm}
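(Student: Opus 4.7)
My plan is to establish the sharp identity
\[
\Phi_u'(r) \;=\; \frac{2}{r^{n+2}}\int_{\partial B_r}\bigl(x\cdot\nabla u - u\bigr)^{2}\,d\mathcal{H}^{n-1},
\]
from which the monotonicity is immediate and the rigidity follows by Euler's identity: $\Phi_u$ is constant iff $x\cdot\nabla u = u$ on a.e.\ sphere, iff $u$ is positively $1$-homogeneous. The scaling relation \eqref{rescaledmon} reduces everything to computing the derivative at $r=1$: differentiating $\Phi_u(\lambda r)=\Phi_{u_\lambda}(r)$ in $r$ at $r=1$ gives $\lambda\Phi_u'(\lambda)=\Phi_{u_\lambda}'(1)$, and the change of variables $y=\lambda x$ on $\partial B_1$ converts the boundary integral at scale $1$ for $u_\lambda$ into the claimed boundary integral at scale $\lambda$ for $u$.

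To compute $\Phi_w'(1)$ for a generic admissible $w$, I would first differentiate \eqref{mon_phi} term by term, using $\int_{\partial B_r} w^2 = r^{n-1}\int_{S^{n-1}} w(r\omega)^2\,d\omega$ to handle the surface term. This produces an expression involving $E(w,1)$, $\int_{\partial B_1}(|\nabla w|^2+\Lambda)$, $\int_{\partial B_1} w^2$, and $\int_{\partial B_1} w\,w_\nu$, with $\Lambda=\alpha^2\chi_{\{w>0\}}+\beta^2\chi_{\{w<0\}}$. To replace the interior integral of $|\nabla w|^2$ by boundary data, I would derive a Pohozaev-type identity by computing $\int_{B_1}\nabla w\cdot\nabla (x\cdot\nabla w - w)$ in two ways: first, via the pointwise formula $\nabla w\cdot\nabla(x\cdot\nabla w-w) = \tfrac{1}{2}x\cdot\nabla|\nabla w|^2$ and the divergence theorem on $\Omega^\pm\cap B_1$; second, via $\Delta w=0$ and integration by parts directly. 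The free-boundary contribution in each case is evaluated using $w=0$ and $\nabla w^\pm\parallel \nu$ on $F(w)$, reducing, thanks to the jump condition $|\nabla w^+|^2-|\nabla w^-|^2=1$, to $-\int_{F(w)\cap B_1}(x\cdot\nu)\,d\mathcal{H}^{n-1}$.

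Equating the two expressions and substituting into the formula for $\Phi_w'(1)$ collapses all interior terms in $|\nabla w|^2$ and, after using $v = w_\nu - w$ on $\partial B_1$, yields
\[
\Phi_w'(1) \;=\; 2\int_{\partial B_1}(w_\nu - w)^2 \;+\; \Bigl[\int_{\partial B_1}\Lambda \;-\; n\int_{B_1}\Lambda \;-\; \int_{F(w)\cap B_1}(x\cdot\nu)\Bigr].
\]
Applying the divergence theorem to the vector field $x$ on $\{w>0\}\cap B_1$ and on $\{w<0\}\cap B_1$ separately evaluates $\int_{F(w)\cap B_1}(x\cdot\nu)$ in terms of $|\{w>0\}\cap\partial B_1|$ and $|\{w<0\}\cap\partial B_1|$; the relation $\alpha^2-\beta^2=1$ then makes the bracket vanish identically. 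Scaling back via the chain-rule computation above yields the displayed formula for $\Phi_u'(r)$, and the rigidity case follows since $\Phi_u$ constant forces $x\cdot\nabla u=u$ on $\partial B_r$ for a.e.\ $r$.

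The main obstacle is justifying the divergence theorem and the pointwise use of the free-boundary condition for a viscosity solution, whose gradient need not a priori have classical traces on $F(u)$. I would bypass this by exploiting the Lipschitz hypothesis on $F(u)$: via the one-sided regularity theory for problems with Lipschitz free boundary (as in \cite{C1}), both $u^+$ and $u^-$ extend $C^{1,\alpha}$ up to $F(u)$ on their respective sides. Hence $\Omega^\pm\cap B_r$ are Lipschitz domains on which $\nabla u$ admits classical boundary traces, the free-boundary condition holds pointwise on $F(u)$, and each integration-by-parts step above is rigorous on every sphere $\partial B_r\subset B_1$.
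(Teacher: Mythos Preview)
Your direct computation of $\Phi_u'(r)$ via a Pohozaev identity is correct and is precisely the calculation underlying Weiss's result in \cite{W}. The paper takes a slightly more indirect route: rather than recomputing $\Phi_u'$, it verifies that $u$ is a critical point of $E$ with respect to domain variations (the inner-variation identity $H(u)=0$) and then invokes Theorem~3.2 of \cite{W} directly. Both arguments rest on the same integration-by-parts identities on $\Omega^\pm(u)\cap B_r$ and the same cancellation coming from $\alpha^2-\beta^2=1$ together with the jump condition on $F(u)$.

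The substantive issue is your justification step. You propose to invoke \cite{C1} to conclude that $u^\pm$ extend $C^{1,\alpha}$ up to $F(u)$. But the whole purpose of Section~5 is to give a proof of Theorem~\ref{new_reg} --- which is exactly the result of \cite{C1} --- that does \emph{not} rely on \cite{C1}; the monotonicity formula you are proving is the key ingredient in that independent proof. Citing \cite{C1} here makes the section's argument circular. (Nor can you downgrade the citation: a harmonic function vanishing on a merely Lipschitz boundary is not in general $C^{1,\alpha}$ up to that boundary; it is the free-boundary structure, via \cite{C1} or an equivalent argument, that forces this.) The paper avoids the circularity by appealing only to its own flatness Theorem~\ref{flatmain2}: since $F(u)$ is a Lipschitz graph, by Rademacher a.e.\ point of $F(u)$ has a tangent plane, so the flatness hypothesis is met there and Theorem~\ref{flatmain2} yields local $C^{1,\gamma}$ regularity of $F(u)$ at $\mathcal{H}^{n-1}$-a.e.\ point. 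That suffices for the free-boundary condition to hold a.e.\ on $F(u)$ and for the boundary terms in the integration by parts to make sense.
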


\begin{proof}

First observe, by our flatness Theorem \ref{flatmain2}, that the free
boundary condition is satisfied almost everywhere on $F(u).$ As mentioned
before, by Theorem 3.2 \cite{W} it is sufficient to prove that $u$ is a
critical point for the energy functional $E,$ with respect to domain
variation. Precisely, for every $\phi \in C_0^{1}(B_1,\mathbb{R}^n)$ $u$
satisfies 
\begin{equation}
\begin{split}
&0=-\frac{d}{d\epsilon}E(u(x+\epsilon\phi(x)))_{\vert\epsilon=0} \\
&=\int_{B_1}\left(\mid \nabla u\mid^2\mbox{div}\phi-2\nabla u D\phi\nabla
u+\alpha^2\chi_{\{u>0\}}\mbox{div}\phi+\beta^2\chi_{\{u<0\}}\mbox{div}%
\phi\right).
\end{split}%
\end{equation}

Let $u$ be our viscosity solution and $\phi \in C_0^{1}(B_1,\mathbb{R}^n)$.
Call 
\begin{equation*}
H(u):=\int_{B_1}\left(\mid \nabla u\mid^2\mbox{div}\phi-2\nabla u
D\phi\nabla u+\alpha^2\chi_{\{u>0\}}\mbox{div}\phi+\beta^2\chi_{\{u<0\}}%
\mbox{div}\phi\right).
\end{equation*}
An easy computation shows that in $B_1^\pm(u)$, 
\begin{equation*}
\langle \nabla |\nabla u|^2, \phi \rangle + 2 \nabla u D\phi \nabla u = 2
\langle \nabla\langle \phi, \nabla u \rangle, \nabla u \rangle.
\end{equation*}
Then, integrating by parts and using that $\phi$ is compactly supported in $%
B_1$ we get 
\begin{align*}  \label{weir}
&H(u)=-\int_{B_1\cap\{u>0\}}\left(\langle\nabla\mid \nabla
u\mid^2,\phi\rangle+2\nabla u D\phi\nabla u\right)+\int_{F(u)}\mid\nabla
u^+\mid^2\langle\phi,\nu\rangle \\
&-\int_{B_1\cap\{u<0\}}\left(\langle\nabla\mid \nabla
u\mid^2,\phi\rangle+2\nabla u D\phi\nabla u\right)-\int_{F(u)}\mid\nabla
u^-\mid^2\langle\phi,\nu\rangle \\
&+\int_{F(u)}(\alpha^2- \beta^2)\langle\phi,\nu\rangle \\
&=-2\int_{B_1\cap\{u>0\}}\langle\nabla\langle\phi,\nabla u\rangle,\nabla
u\rangle+\int_{F(u)}(\mid\nabla u^+\mid^2-\mid\nabla
u^-\mid^2)\langle\phi,\nu\rangle \\
&-2\int_{B_1\cap\{u<0\}}\langle\nabla\langle\phi,\nabla u\rangle,\nabla
u\rangle +\int_{F(u)}\langle\phi,\nu\rangle,
\end{align*}
where $\nu$ denotes the unit normal vector to $F(u)$ pointing towards $%
B^+_1(u).$ In the last equality we used that $\alpha^2-\beta^2=1.$

Integrating by parts again, and using that $u$ is harmonic in $B^\pm_1(u)$
and $\phi$ is compactly supported in $B_1$ we get 
\begin{align*}
&H(u)=-2\int_{F(u)} \langle\phi,\nabla u^+\rangle\langle\nabla u^+,
\nu\rangle+\int_{F(u)}(\mid\nabla u^+\mid^2-\mid\nabla
u^-\mid^2)\langle\phi,\nu\rangle \\
&+2\int_{F(u)} \langle\phi,\nabla u^-\rangle\langle\nabla u^-, \nu\rangle
+\int_{F(u)}\langle\phi,\nu\rangle.
\end{align*}

Since $\nu = \frac{\nabla u^+}{\mid\nabla u^+\mid}= - \frac{\nabla u^-}{%
\mid\nabla u^-\mid}$ a.e. on $F(u)$ we get that 
\begin{align*}
H(u)=-\int_{F(u)}(\mid\nabla u^+\mid^2 - \mid\nabla
u^-\mid^2)\langle\phi,\nu\rangle +\int_{F(u)}\langle\phi,\nu\rangle =0,
\end{align*}
because the free boundary condition is satisfied a.e.
\end{proof}

\begin{rem}
\label{rem}If $u_k$ are viscosity solutions with Lipschitz free boundaries
with uniform Lipschitz bound which converges uniformly to $u$ on compact
sets, then it follows that 
\begin{equation}  \label{limit}
\Phi_{u_k} (r) \to \Phi_u(r).
\end{equation}

Moreover, if $u$ satisfies the assumptions of Theorem \ref{Lipmainvar} then $%
\Phi_u(r)$ is bounded below as $r \to 0$. This means that 
\begin{equation*}
\Phi_u(0^+) = \lim_{r \to 0^+} \Phi_u(r) \quad \text{exists}
\end{equation*}
and by \eqref{limit}-\eqref{rescaledmon} any blow-up sequence $u_\lambda$
converges uniformly on compact sets (up to a subsequence) to a homogeneous
of degree $1$ solution $u_0$.

\begin{rem}
\label{ACF}By the monotonicity formula of Alt-Caffarelli-Friedman \cite{ACF}%
, either $u_0$ is a two-plane solution $U_\beta$ for $\beta > 0$ or $u_0^-
\equiv 0$.
\end{rem}

Consider the one-phase problem: 
\begin{equation}  \label{fbvisco1}
\begin{array}{ll}
\Delta U = 0, & \hbox{in $\{U>0\}$} \\ 
\  &  \\ 
|\nabla U|^2= 1, & \hbox{on $F(U).$} \\ 
& 
\end{array}%
\end{equation}
\end{rem}

\begin{defn}
\label{def-cones} A global viscosity solution to \eqref{fbvisco1} which is
homogeneous of degree 1 and has Lipschitz free boundary is called a \textit{%
Lipschitz cone.}
\end{defn}

We say that a Lipschitz cone is trivial if it coincides (up to rotations)
with the one-plane solution $U_0=x_n^+$. We wish to prove the following
theorem.

\begin{thm}
\label{trivial} All Lipschitz cones are trivial.
\end{thm}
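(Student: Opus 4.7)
The plan is induction on the dimension $n \geq 2$, combining dimension reduction via blow-ups with the flatness theorem of the paper (Theorem 1.1 in its one-phase form). The base case $n = 2$ is immediate: writing $U(r,\theta) = r\phi(\theta)$ in polar coordinates, harmonicity of $U$ in $\{U > 0\}$ gives $\phi'' + \phi = 0$ on the arc $\Omega = \{\phi > 0\} \subset S^1$; the conditions $\phi \geq 0$ vanishing at both endpoints of $\Omega$ together with $|\phi'| = 1$ on $\partial \Omega$ force $\Omega$ to be a half-circle and $\phi(\theta) = \sin(\theta - \theta_0)$, so $U(x) = (x \cdot \nu)^+$.

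For the inductive step, fix a Lipschitz cone $U$ on $\mathbb{R}^n$ ($n \geq 3$) and choose a free boundary point $x_0$ with $|x_0| = 1$. First I would study the rescalings $U_\lambda(x) := \lambda^{-1} U(x_0 + \lambda x)$; using the $1$-homogeneity of $U$, $U_\lambda(x) = U(x + x_0/\lambda)$, so any subsequential limit $V$ as $\lambda \to 0^+$ is translation-invariant in the direction $x_0$. A similar one-line computation shows that $V$ is itself $1$-homogeneous about the origin, so $V(x) = V_0(x_\perp)$ for a Lipschitz cone $V_0$ in the hyperplane $x_0^\perp$ of dimension $n-1$. By the inductive hypothesis $V_0(y) = (y \cdot \nu)^+$ for some unit $\nu \perp x_0$, hence $V(x) = (x \cdot \nu)^+$ is a one-plane solution. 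Since $U$ is therefore $\varepsilon$-flat at $x_0$ on sufficiently small scales, the flatness theorem (in its degenerate one-phase form developed in Section 4) yields that $F(U)$ is $C^{1,\gamma}$ in a neighborhood of $x_0$. By the scaling invariance of $U$ and the arbitrariness of $x_0$, this promotes to $F(U) \setminus \{0\}$ being a $C^{1,\gamma}$ cone.

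It remains to show that this smooth cone is a hyperplane. I would restrict to the sphere: $\phi := U|_{S^{n-1}} \geq 0$ is $C^{1,\gamma}$ and satisfies $-\Delta_{S^{n-1}}\phi = (n-1)\phi$ in the open set $\Omega := \{\phi > 0\}$, with $\phi = 0$ and $|\nabla_{S^{n-1}}\phi| = 1$ on $\partial\Omega$. The Lipschitz structure of $F(U)$, with $\{U > 0\}$ locally on one side, makes $\Omega$ connected; since $\phi \geq 0$, $\phi$ is the first Dirichlet eigenfunction and $\lambda_1(\Omega) = n-1$. The main obstacle of the whole argument is this final classification: the overdetermined condition on $\partial\Omega$ together with a Serrin-type rigidity on the sphere forces $\Omega$ to be a spherical cap, and the eigenvalue identity $\lambda_1 = n-1$ then singles out the hemisphere among caps. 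Thus $\phi(\theta) = (\theta \cdot \nu)^+$, so $U(x) = (x \cdot \nu)^+$, which is trivial.
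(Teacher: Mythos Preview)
Your overall architecture---induction via blow-up at a point of $F(U)\cap S^{n-1}$, dimension reduction, then the flatness theorem to get $F(U)\setminus\{0\}$ smooth---matches the paper's route to Proposition~\ref{Lip_trivial}. Two points deserve comment.

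First, a minor correction: the $1$-homogeneity of the blow-up $V$ at $x_0\neq 0$ is \emph{not} a one-line computation from the homogeneity of $U$. Your identity $V_\lambda(rx)=rV_{r\lambda}(x)$ only yields $V(rx)=rV(x)$ if you already know uniqueness of the blow-up limit along the full family $\lambda\to 0$, which is exactly what the Weiss monotonicity formula (Theorem~5.2 and Remark~\ref{rem}) supplies. This is easily repaired since the formula is available, but you should invoke it.

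The real issue is your final classification step. After reducing to a cone with $C^{1,\gamma}$ free boundary away from the origin, you pass to the trace $\phi=U|_{S^{n-1}}$ and assert a ``Serrin-type rigidity on the sphere'' for the overdetermined first-eigenfunction problem
\[
-\Delta_{S^{n-1}}\phi=(n-1)\phi\ \text{in}\ \Omega,\qquad \phi=0,\ |\nabla\phi|=1\ \text{on}\ \partial\Omega,
\]
to force $\Omega$ to be a geodesic cap. You yourself flag this as ``the main obstacle,'' and indeed it is: this is not an off-the-shelf theorem one can simply cite. The classical Serrin moving-plane argument and its spherical extensions (e.g.\ Kumaresan--Prajapat) treat the torsion problem $\Delta u=-1$, not first eigenfunctions; the eigenvalue version is tied to Schiffer-type questions and requires substantial additional work that you do not supply. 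As written, the proof has a genuine gap here.

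The paper sidesteps this entirely. Its Proposition~\ref{Lip_trivial} argues instead via monotonicity: since $F(U)$ is a Lipschitz graph, the boundary Harnack principle (Proposition~\ref{mon_prop}) gives $U_\tau\ge 0$ in $\{U>0\}$ for every $\tau$ in the Lipschitz cone of directions $\mathcal C$. Choosing $\tau\in\partial\mathcal C$ tangent to $F(U)$ at some $X_0\neq 0$, either $U_\tau\equiv 0$ (dimension reduction, use the inductive hypothesis) or $U_\tau>0$ and Hopf's lemma gives $U_{\tau\nu}(X_0)>0$, contradicting differentiation of $U_\nu^2=1$ along $F(U)$. This uses only the strong maximum principle and Hopf lemma, and delivers the conclusion without any spherical overdetermined rigidity.
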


To this aim, we will use a standard dimension reduction argument. A point $%
x_0 \in F(U)$ such that $F(U)$ is $C^{1,\gamma}$ in a neighborhood of $x_0,$
is called regular. Points that are not regular, are called singular.

\begin{lem}
\label{dimred}Assume $U$ is a Lipschitz cone in $\R^{n}$ and $x_0=e_1 \in F(U)$%
. Then, any blow-up sequence 
\begin{equation*}
V_\lambda(x) = \lambda^{-1} U(x_0+\lambda x)
\end{equation*}
has a subsequence $V_{\lambda_k}, \lambda_k \to 0$ which converges uniformly
on compact sets to $V(x_2,\ldots,x_{n})$ with $V$ a Lipschitz cone in $%
\R^{n-1} $. Moreover if $x_0$ is a singular point for $F(U)$, then $V$ is a
non-trivial cone.
\end{lem}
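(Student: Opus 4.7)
My plan is to carry out a standard dimension reduction, using the Weiss monotonicity formula for homogeneity and the $1$-homogeneity of $U$ for translational invariance, concluding with the flatness theorem to handle the singular case.

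First I would establish compactness. Since $U$ is Lipschitz on $B_1$ (any Lipschitz cone has the Lipschitz constant of its trace on $\partial B_1$), the rescalings $V_\lambda(x)=\lambda^{-1}U(e_1+\lambda x)$ share a common Lipschitz bound on every fixed compact set. Hence along a subsequence $\lambda_k\to 0$, the graphs of $V_{\lambda_k}$ converge locally uniformly to the graph of some Lipschitz $V$. Using non-degeneracy (as in Section~2) together with the uniform Lipschitz control, the free boundaries $F(V_{\lambda_k})$ converge to $F(V)$ in the Hausdorff sense, so $V$ is a viscosity solution of \eqref{fbvisco1} in $\mathbb{R}^n$.

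Next I would promote $V$ to a cone. By the Weiss monotonicity formula centered at $e_1$, which applies because $F(U)$ is Lipschitz (Theorem in the previous subsection), the function $r\mapsto \Phi_U(e_1,\,r):=\Phi_{U(e_1+\cdot)}(r)$ is monotone increasing, hence admits a limit $\ell:=\Phi_U(e_1,0^+)$. Since $\Phi_{V_\lambda}(r)=\Phi_U(e_1,\lambda r)$, Remark~\ref{rem} gives
\begin{equation*}
\Phi_V(r)=\lim_{k\to\infty}\Phi_{V_{\lambda_k}}(r)=\ell\quad\text{for every }r>0.
\end{equation*}
By the equality case of the Weiss formula, $V$ is $1$-homogeneous. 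For translation invariance in the $e_1$ direction, I use the $1$-homogeneity of $U$: for fixed $s\in\mathbb{R}$, setting $\mu=\lambda/(1+\lambda s)$ one computes
\begin{equation*}
V_\lambda(x+se_1)=\lambda^{-1}U\bigl((1+\lambda s)e_1+\lambda x\bigr)=\frac{1+\lambda s}{\lambda}\,U\!\left(e_1+\mu x\right)=(1+\lambda s)\,V_\mu(x).
\end{equation*}
Passing to the limit $\lambda_k\to 0$ (so $\mu\to 0$ as well) yields $V(x+se_1)=V(x)$, so $V$ depends only on $(x_2,\dots,x_n)$ and can be written as $V(x)=\tilde V(x_2,\dots,x_n)$ with $\tilde V$ a $1$-homogeneous viscosity solution in $\mathbb{R}^{n-1}$. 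The free boundary $F(\tilde V)$ is Lipschitz because $F(V)=\mathbb{R}\times F(\tilde V)$ and $F(V)$ inherits the Lipschitz constant of $F(U)$ under blow-up; hence $\tilde V$ is a Lipschitz cone in $\mathbb{R}^{n-1}$.

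Finally, the non-triviality assertion under the singularity hypothesis follows from the flatness theorem. If the limit $\tilde V$ were trivial, then after rotation $V=x_n^+$, so $V_{\lambda_k}$ is uniformly close to $U_0$ on $B_1$ for $k$ large. Rescaling back, this says that on some ball $B_{\lambda_k}(e_1)$ the function $U$ is $\bar\varepsilon$-flat in the sense of Theorem~\ref{flatmain2} (with $A\equiv I$ and $f\equiv 0$), and therefore $F(U)$ is $C^{1,\gamma}$ in a neighborhood of $e_1$, contradicting the assumption that $e_1$ is a singular point. The main technical point is ensuring the Hausdorff convergence of free boundaries so that both the Weiss functional passes to the limit and the flatness hypothesis is genuinely transferred to $U$; this is handled by the non-degeneracy in Section~2.
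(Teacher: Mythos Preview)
Your argument is correct and follows the same route as the paper: homogeneity of the blow-up from the Weiss formula (Remark~\ref{rem}), translation invariance in the $e_1$ direction from the $1$-homogeneity of $U$, and non-triviality at a singular point from Theorem~\ref{flatmain2}. Two small points: your displayed identity should read $V_\lambda(x+se_1)=V_\mu(x)$ (the factor is $1$, not $1+\lambda s$, since $\mu^{-1}=\lambda^{-1}(1+\lambda s)$), and when passing to the limit you are tacitly using that $V_{\mu_k}\to V$ along the induced sequence $\mu_k=\lambda_k/(1+\lambda_k s)$, which follows from $\mu_k^{-1}-\lambda_k^{-1}=s$ and the Lipschitz bound on $U$; the paper sidesteps this by writing the identity entirely in terms of $V_\lambda$, namely $V_\lambda(x)=(1+t\lambda)^{-1}V_\lambda\bigl(tx_0+(1+t\lambda)x\bigr)$, so the limit uses only the single subsequence $V_{\lambda_k}$.
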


\begin{proof}
From the fact that $U$ is homogeneous of degree 1 and from the formula for $%
V_\lambda$ we get that 
\begin{align*}
V_\lambda(x) &= \lambda^{-1} (1+t\lambda)^{-1} U((1+ t\lambda)(x_0+\lambda
x)) \\
&= (1+t\lambda)^{-1}V_\lambda(t x_0 +(1+t\lambda)x).
\end{align*}

Letting $\lambda=\lambda_k \to 0$ we obtain that 
\begin{equation*}
V(x) = V(t x_0 +x), \quad \text{for all $t$}.
\end{equation*}
Thus, $V$ is constant in the $x_0=e_1$ direction and by Remark \ref{rem} is
homogeneous of degree 1. Now, it is easily checked from the definition that $%
V(x_2,\ldots, x_n)$ is a viscosity solution in $\R^{n-1},$ and clearly $V$ is
a Lipschitz cone.

The final statement follows from the flatness Theorem \ref{flatmain2}.
\end{proof}

Assume that $U$ is a non-trivial Lipschitz cone in $\R^{n}$ for some dimension 
$n$. Then by Lemma \ref{dimred} we obtain that if $F(U)$ has a singular
point different than the origin, then there exists a non-trivial Lipschitz
cone in $\R^{n-1}.$ By repeating this dimension reduction argument, we can
assume that there is a dimension $k<n$ and a non-trivial cone in $\R^{k+1}$
which is regular at all points except at 0. Thus, Theorem \ref{trivial}
reduces to the following proposition.


\begin{prop}
\label{Lip_trivial}All Lipschitz viscosity cones whose free boundary is $%
C^{\infty }$ outside of the origin are trivial.
\end{prop}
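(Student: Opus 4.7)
The plan is to reduce the claim via homogeneity to a spherical overdetermined eigenvalue problem and close the argument using a Bochner/$P$-function identity.

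Write $U(x)=r\,u(\omega)$ with $r=|x|$ and $\omega=x/|x|$. Since $U$ is $1$-homogeneous and $F(U)\in C^\infty$ off the origin, $\Omega:=\{u>0\}\subset S^{n-1}$ is an open set with smooth boundary $\Sigma$, and $u\in C^\infty(\overline\Omega)$. Decomposing the Laplacian in spherical coordinates, $\Delta U=0$ in $\{U>0\}$ becomes $-\Delta_{S^{n-1}}u=(n-1)u$ in $\Omega$; the identity $\nabla U=u\,\hat r+\nabla_{S^{n-1}}u$ together with $u=0$ on $\Sigma$ turns $|\nabla U|=1$ on $F(U)$ into $|\nabla_{S^{n-1}}u|=1$ on $\Sigma$. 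Since $u>0$, it is the first Dirichlet eigenfunction of $-\Delta_{S^{n-1}}$ on $\Omega$, and $\lambda_1(\Omega)=n-1$.

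I would then set $P:=|\nabla_{S^{n-1}}u|^2+u^2$, so $P\equiv 1$ on $\Sigma$. The Bochner formula on $S^{n-1}$ (where $\mathrm{Ric}=(n-2)g$), together with the equation, yields
\[
\Delta_{S^{n-1}}P \;=\; 2|D^2u|^2-2(n-1)u^2 \;\ge\; 0,
\]
the inequality being the pointwise Cauchy--Schwarz $|D^2u|^2\ge (\Delta u)^2/(n-1)=(n-1)u^2$. Thus $P$ is subharmonic with boundary value $1$, so $P\le 1$ in $\Omega$. A short boundary computation using $u=0$, $\partial_\nu u=-1$ and the splitting $\Delta_{S^{n-1}}u=\partial_{\nu\nu}u+H_\Sigma\partial_\nu u+\Delta_\Sigma u$ gives $\partial_\nu P=-2H_\Sigma$ on $\Sigma$, and integrating $\Delta P$ over $\Omega$ yields
\[
2\int_\Omega \bigl(|D^2u|^2-(n-1)u^2\bigr)\,dV \;=\; -2\int_\Sigma H_\Sigma\,dS.
\]

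The next step is to show $\int_\Sigma H_\Sigma\,dS\ge 0$ by a Minkowski-type identity on $S^{n-1}$ obtained by applying the divergence theorem on $\Omega$ to the tangent field $X(\omega)=a-(a\cdot\omega)\omega$ induced by an ambient translation $a\in\mathbb{R}^n$, combined with the identity $\int_\Omega u=|\Sigma|/(n-1)$ coming from testing the equation against $1$. Once both sides of the integral identity above vanish, the pointwise Cauchy--Schwarz achieves equality, forcing the Obata equation $D^2u=-u\,g$ in $\Omega$. Its only solutions on $S^{n-1}$ are the height functions $u(\omega)=a\cdot\omega$ with $a\in\mathbb{R}^n$, and the conditions $u>0$ in $\Omega$ and $|\nabla u|=1$ on $\Sigma$ fix $a$ as a unit vector. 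Thus $\Omega$ is an open hemisphere and $U(x)=x_n^+$ up to rotation.

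The main obstacle is the sign $\int_\Sigma H_\Sigma\,dS\le 0$, which is exactly a spherical Serrin-type symmetry assertion. As an alternative one may quote the Kumaresan--Prajapat version of Serrin's theorem on $S^{n-1}$ to conclude directly that $\Omega$ is a geodesic ball; combined with $\lambda_1(\Omega)=n-1$ and strict monotonicity of $\lambda_1$ in the geodesic radius of caps (equal to $n-1$ precisely for the hemisphere), this pins down $\Omega$ as the hemisphere and yields $U(x)=x_n^+$.
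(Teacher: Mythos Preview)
Your reduction to the overdetermined spherical eigenvalue problem and the $P$-function computation are correct and constitute a genuinely different route from the paper, but the argument has a real gap at the decisive step. From $\Delta_{S^{n-1}}P\ge 0$ and the divergence theorem you already obtain $\int_\Sigma H_\Sigma\le 0$; to force equality (and hence the Obata equation) you need the reverse inequality $\int_\Sigma H_\Sigma\ge 0$, and this you do not establish. The Minkowski identity for the conformal Killing field $X_a=\nabla(a\cdot\omega)$ yields only
\[
-(n-2)\int_\Sigma(a\cdot\omega)\,dS=\int_\Sigma H_\Sigma\,\langle X_a,\nu\rangle\,dS,
\]
which relates a \emph{weighted} integral of $H_\Sigma$ to boundary moments of $a\cdot\omega$ and does not control the sign of $\int_\Sigma H_\Sigma$ without a further geometric input (such as star-shapedness with respect to a pole) that the Lipschitz-cone hypothesis alone does not provide. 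The Kumaresan--Prajapat alternative has the same defect: their moving-plane argument on the sphere requires $\overline\Omega$ to lie in an open hemisphere, while for a Lipschitz cone with large constant $L$ the cross-section $\Omega=\{x_n>g(x')\}\cap S^{n-1}$ can contain the entire equator and hence sits in no hemisphere. So as written the proof is incomplete exactly at the Serrin-type rigidity step you flag as ``the main obstacle''.

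By contrast, the paper exploits the Lipschitz-graph structure directly and avoids spherical rigidity altogether. Since $F(U)$ is a Lipschitz graph in the $e_n$ direction, the Boundary Harnack inequality (Proposition~\ref{mon_prop}) makes $U$ monotone in every direction of the cone $\mathcal{C}=\{\xi_n\ge L|\xi'|\}$. A unit direction $\tau\in\partial\mathcal{C}$ tangent to $F(U)$ at some $X_0\neq 0$ then satisfies $U_\tau\ge 0$ in $\{U>0\}$; either $U_\tau\equiv 0$, which reduces the dimension by one and closes by induction on $n$, or $U_\tau>0$, in which case Hopf's lemma gives $U_{\tau\nu}(X_0)>0$, contradicting the constancy of $U_\nu$ along $F(U)\setminus\{0\}$ coming from $|\nabla U^+|^2=1$. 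This short induction-plus-Hopf argument replaces the missing rigidity step in your approach.
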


\begin{proof}
Let $U$ be a Lipschitz viscosity cone which is smooth outside the origin,
and denote by $L$ the Lipschitz norm of $F(U)$ as a graph in the $e_n$
direction. We want to show that $U$ is trivial.

We prove the proposition by induction on $n$. The case $n=1$ is obvious.
Assume the statement holds for $n-1.$

By Proposition \ref{mon_prop} below, $U$ is monotone in the cone of
directions 
\begin{equation*}
\mathcal C:= \{\xi=(\xi^{\prime}, \xi_n) \in \mathbb{R}^n: \:\:  \xi_n \geq L |\xi^{\prime}|\},
\end{equation*}
since $F(U)$ is a Lipschitz graph with respect to any direction $\xi \in \mathcal C^o.$
Moreover there is a direction $\tau \in \partial \mathcal{C}$, $|\tau| =1$ such that $
\tau $ is tangent to $F(U)$ at some point $X_0 \in F(U) \setminus \{0\}.$
Then, 
\begin{equation*}
U_\tau \geq 0 \quad \text{in $\{U>0\}$}.
\end{equation*}

If $U_\tau=0$ at some point in $\{U>0\}$ then $U_\tau \equiv 0$, thus $U$ is
constant in the $\tau$ direction, and by dimension reduction we can reduce
the problem to $n-1$ dimensions thus by the induction assumption $U$ is
trivial. Otherwise $U_\tau >0$ in $\{U>0\}$ and by Hopf Lemma 
\begin{equation*}
U_{\tau \nu} >0 \quad \text{on $F(U) \setminus \{0\}.$}
\end{equation*}
This contradicts the free boundary condition, $U_\nu^2=1$ on $F(U) \setminus
\{0\}$.
\end{proof}

In the proof above we used the following result. Its proof is contained for
example in \cite{CS} and it is a consequence of the Boundary Harnack
Inequality.

\begin{prop}
\label{mon_prop}Assume that $v\geq 0$ solves $\Delta v=0$ in $B^+_1(v)$, and
that $F(v)$ is a Lipschitz graph in the $e_n$ direction in $B_1$ with
Lipschitz constant $L,$ and $0 \in F(v).$ Then $v$ is monotone in the $e_n$
direction in $B_\delta$, with $\delta$ depending on $L$ and $n$.
\end{prop}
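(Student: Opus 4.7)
The proof is a classical application of the Boundary Harnack Principle (BHI) for positive harmonic functions in Lipschitz domains. My plan is to establish $\partial_{e_n} v \geq 0$ on $\Omega^+ \cap B_\delta$ via the differentiated form of the BHI, and then integrate to obtain the monotonicity statement on $B_\delta$.

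\textbf{Geometric setup.} Since $F(v)$ is the graph of an $L$-Lipschitz function in the $e_n$-direction, the positivity set $\Omega^+ := B_1 \cap \{v > 0\}$ is a Lipschitz domain, and $e_n$ makes an angle bounded below by $\arctan(1/L)$ with every tangent hyperplane to $F(v)$. Equivalently, $e_n$ lies strictly inside the geometric monotonicity cone $\Gamma_L := \{\tau : \tau \cdot e_n > L |\tau'|\}$, so that for $\epsilon > 0$ small the translate $\Omega^+ - \epsilon e_n$ contains $\Omega^+ \cap B_{1-\epsilon}$. This quantitative interior-direction property is what allows the Boundary Harnack machinery to apply uniformly in terms of $L$ and $n$.

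\textbf{Analytic step.} The main input is the differentiated form of the BHI (of Caffarelli--Fabes--Mortola--Salsa type), applied to the positive harmonic function $v$ vanishing on $F(v) \cap B_1$. This supplies constants $\delta = \delta(L,n) > 0$ and $c = c(L,n) > 0$ such that
$$\partial_{e_n} v(x) \,\geq\, c\,\frac{v(x)}{d(x, F(v))}, \qquad x \in \Omega^+ \cap B_\delta,$$
which forces $\partial_{e_n} v \geq 0$ throughout $\Omega^+ \cap B_\delta$. Integrating along $e_n$-segments contained in $B_\delta$, and using the continuity of $v$ across $F(v)$ (where $v = 0$), one concludes that $v(x + s e_n) \geq v(x)$ on $B_\delta$ whenever $s > 0$ is small enough that both endpoints remain in $B_\delta$, which is the desired monotonicity.

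\textbf{Main obstacle.} The nontrivial step is the differentiated BHI estimate displayed above. It combines (i) the standard BHI, giving H\"older regularity of the ratio of two positive harmonic functions vanishing on a common Lipschitz boundary portion (proved via Carleson-type estimates and harmonic-measure comparisons in Lipschitz domains), with (ii) a comparison against an explicit barrier having a positive $e_n$-derivative at $F(v)$, so as to transfer the ratio estimate into a pointwise lower bound on the derivative. Both ingredients are classical by now, and the detailed argument is carried out in \cite{CS}; once it is granted, the proof of the proposition is essentially immediate, as outlined above.
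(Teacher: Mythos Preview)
Your proposal is correct and matches the paper's treatment exactly: the paper does not give a proof of this proposition at all, but simply states that it is a consequence of the Boundary Harnack Inequality and refers to \cite{CS} for the argument. Your outline of the differentiated BHI yielding $\partial_{e_n} v \geq c\, v/d(x,F(v))$ in $\Omega^+\cap B_\delta$ is precisely the standard route recorded there, so there is nothing to add.
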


We are now finally ready to exhibit the proof of Theorem \ref{new_reg}.

\ 

\textit{Proof of Theorem $\ref{new_reg}$.} First, we show that given a
viscosity solution $u$ with Lipschitz free boundary in $B_1$, $0 \in F(u)$,
we can find $\sigma>0$ small depending on $u$ such that $F(u)$ is a $%
C^{1,\gamma}$ graph in $B_{\sigma}.$ Indeed, there exists a blow-up sequence 
$u_{\lambda_k}$ which converges to a Lipschitz viscosity cone (see Remark %
\ref{rem}), that in view of Remark \ref{ACF} and Theorem \ref{trivial} is of
the form $U_\beta$ for $\beta \geq 0$. The conclusion now follows from our
flatness Theorem \ref{flatmain2}.

Next we use compactness to show that $\sigma$ depends only on the Lipschitz
constant $L$ of $F(u).$ For this we need to show that $F(u)$ is $\bar
\varepsilon$-flat in $B_r$ for some $r \geq \sigma$ depending on $L$. If by
contradiction no such $\sigma$ exists, then we can find a sequence of
solutions $u_k$ and of $\sigma_k \to 0$ such that $u_k$ is not $\bar
\varepsilon$-flat in any $B_r$ with $r \geq \sigma_k.$ Then the $u_k$
converge uniformly (up to a subsequence) to a solution $u_*$ and we reach a
contradiction since $F(u_*)$ is $C^{1,\gamma}$ in a neighborhood of 0 by the
first part of the proof. \qed

\section{Perron's solutions}

In this section we apply our results to the Perron's solution constructed in 
\cite{DFS5}, where we used the following definition of weak or viscosity
solution. Given a continuous function $v$ on $\Omega $, we say that a point $%
x_{0}\in F(v)$ is regular from the right (resp. left) if there is a ball $%
B\subset \Omega ^{+}(v)$ (resp. $B\subset \Omega ^{-}(v)$), such that $%
\overline{B}\cap F(v)=\{x_{0}\}$. Let us denote by $\nu (x_{0}) $ the unit
normal at the point $x_{0}.$ $A\left( x_{0}\right) \nu (x_{0})$ is the
co-normal to $\partial B$ at $x_{0}$ pointing toward $\Omega ^{+}(v).$ For
coherence with the rest of the paper, in what follows 
\begin{equation*}
G(\beta ,\nu ,x)=\sqrt{\langle A\nu ,\nu \rangle ^{-1}+\beta ^{2}}.
\end{equation*}%
However the reader should notice that the arguments below continue to work
for a general $G$, as considered in \cite{DFS5}.

\begin{defn}
\label{Per} A function $u\in C(\Omega)$ is a weak solution of \eqref{FBintro}
if

\begin{enumerate}
\item $\mathcal{L}u=f$ in $\Omega^+(u)\cup\Omega^-(u)$ in the weak sense;

\item 
\begin{itemize}
\item[(a)] If $x_0\in F(u)$ is regular from the right with touching ball $B,$
then in a neighborhood of $x_0$ 
\begin{equation*}
u^+\geq \alpha\langle x-x_0,\nu(x_0)\rangle^++o(|x-x_0|),\quad\alpha\geq 0,
\end{equation*}
in $B$ and 
\begin{equation*}
u^-\leq \beta\langle x-x_0,\nu(x_0)\rangle^-+o(|x-x_0|),\quad\beta\geq 0,
\end{equation*}
in $B^c$ with equality along non-tangential domain and 
\begin{equation*}
\alpha\leq G(\beta,\nu_0, x_0).
\end{equation*}

\item[(b)] If $x_0\in F(u)$ is regular from the left with touching ball $B,$
then in a neighborhood of $x_0$ 
\begin{equation*}
u^-\geq \beta\langle x-x_0,\nu(x_0)\rangle^++o(|x-x_0|),\quad\mbox{in}\:\:\:
B,\:\:\:\mbox{with}\:\:\: \beta\geq 0,
\end{equation*}
and 
\begin{equation*}
u^+\leq \alpha\langle x-x_0,\nu(x_0)\rangle^++o(|x-x_0|),\quad\mbox{in}%
\:\:\: B^c,\:\:\:\mbox{with}\:\:\: \alpha\geq 0,
\end{equation*}
with equality along non-tangential domain and 
\begin{equation*}
\alpha\geq G(\beta,\nu_0,x_0).
\end{equation*}
\end{itemize}
\end{enumerate}
\end{defn}

\begin{lem}
Definition $\ref{Per}$ and Definition $\ref{defnhsol}$ are equivalent.
\end{lem}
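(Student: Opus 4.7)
The plan is to prove the two implications separately. A key preliminary observation is that at a right-regular point $x_0$ of $u$, with tangent ball $B\subset\{u>0\}$ and inward unit normal $\nu=\nu(x_0)$, a function with one-sided normal slopes $\alpha_v,\beta_v\ge 0$ satisfies the viscosity inequality $|\nabla_A v^+(x_0)|^2-|\nabla_A v^-(x_0)|^2\le 1$ exactly when $(\alpha_v^2-\beta_v^2)\langle A(x_0)\nu,\nu\rangle\le 1$, i.e.\ $\alpha_v\le G(\beta_v,\nu,x_0)$. This identifies the algebraic content of the two definitions; they differ only in the class of test profiles against which they are enforced.

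For Definition~\ref{Per} $\Rightarrow$ Definition~\ref{defnhsol}: let $v\in C^{1,\bar\gamma}$ with $F(v)\in C^{2}$ touch $u$ from below at $x_0\in F(v)$. Since $u\ge v$ with $u(x_0)=v(x_0)=0$, we have $\{v>0\}\subset\{u>0\}$ near $x_0$, so $x_0\in F(u)$. The $C^{2}$ regularity of $F(v)$ provides a ball $B\subset\{v>0\}\subset\{u>0\}$ tangent to $F(v)$ at $x_0$ and, after shrinking, $\overline{B}\cap F(u)=\{x_0\}$, making $x_0$ right-regular for $u$ with the same inward normal $\nu$. Applying Definition~\ref{Per} to $u$ furnishes $\alpha,\beta\ge 0$ with $\alpha\le G(\beta,\nu,x_0)$ and expansions enjoying equality along non-tangential directions, while the $C^{1,\bar\gamma}$ regularity of $v$ yields slopes $\alpha_v=|\nabla v^+(x_0)|$, $\beta_v=|\nabla v^-(x_0)|$. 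Comparing $v\le u$ along the non-tangential directions $\pm\nu$ on the two phases gives $\alpha_v\le\alpha$ and $\beta_v\ge\beta$. Monotonicity of $G$ in its first argument then produces
\[
\alpha_v\le\alpha\le G(\beta,\nu,x_0)\le G(\beta_v,\nu,x_0),
\]
which is the desired viscosity inequality. The supersolution case is symmetric, with the tangent ball placed in $\{v<0\}$.

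For the converse Definition~\ref{defnhsol} $\Rightarrow$ Definition~\ref{Per}: let $x_0\in F(u)$ be right-regular with tangent ball $B\subset\{u>0\}$ and inward normal $\nu$. First one needs the asymptotic developments. Boundary $C^{1,\bar\gamma}$ regularity for $\mathcal{L}u=f$ in $B$ with zero boundary data at the $C^{2}$ point $x_0\in\partial B$ produces $\alpha\ge 0$ and $u^+(x)=\alpha\langle x-x_0,\nu\rangle+o(|x-x_0|)$ non-tangentially in $B$. The expansion for $u^-$ in $B^c$, with slope $\beta\ge 0$ and the required one-sided inequality, follows from the global Lipschitz continuity of $u$ (obtained from the almost monotonicity formula of \cite{MP} as in \cite{CJK}) combined with a comparison argument in $\{u<0\}^{\circ}$ against an explicit barrier. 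To prove $\alpha\le G(\beta,\nu,x_0)$, suppose for contradiction $\alpha>G(\beta,\nu,x_0)$. By continuity choose $\alpha'<\alpha$ and $\beta'>\beta$ with $(\alpha')^{2}-(\beta')^{2}>\langle A(x_0)\nu,\nu\rangle^{-1}$, and build a $C^{1,\bar\gamma}$ function $w$ with $F(w)\in C^{2}$ whose free boundary is a small paraboloid internally tangent to $\partial B$ at $x_0$, with normal slopes $\alpha'$ and $\beta'$ on the two sides. The strict inequalities $\alpha'<\alpha$, $\beta'>\beta$ together with the non-tangential equalities in the developments of $u$ force $w\le u$ in a neighborhood of $x_0$, with contact at $x_0$; the viscosity subsolution condition applied to $w$ then yields $(\alpha')^{2}-(\beta')^{2}\le\langle A(x_0)\nu,\nu\rangle^{-1}$, a contradiction. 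Left-regular points are treated symmetrically by exchanging the roles of the two phases.

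The main technical obstacle is the existence of the asymptotic developments in the second direction: deriving non-tangential equality for $u^-$ on the exterior of the touching ball requires carefully combining the boundary regularity theory for the divergence operator $\mathcal L$ with the global Lipschitz bound on $u$ and a barrier construction in $\{u<0\}^{\circ}$. The $C^{1,\bar\gamma}$ comparison function $w$ in the contradiction step is a standard object (a smooth internal perturbation of the tangent plane of $\partial B$ at $x_0$), but the curvature of $F(w)$ has to be fine-tuned so that its quadratic second-order contribution absorbs the $o(|x-x_0|)$ terms in the developments of $u$, in particular along directions close to tangential.
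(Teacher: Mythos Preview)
Your first direction (Definition~\ref{Per} $\Rightarrow$ Definition~\ref{defnhsol}) is correct and matches the paper's argument.

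In the converse direction there is a genuine gap. You claim that the strict inequalities $\alpha'<\alpha$, $\beta'>\beta$ together with the \emph{non-tangential} equalities in the expansions of $u$ force your paraboloidal test function $w$ to lie below $u$ in a full neighborhood of $x_0$. This is not true. Near $x_0$, points close to $F(w)$ become tangential to $\partial B$ (their distance to $\partial B$ is of order $|x-x_0|^2$), and in that region the expansions in Definition~\ref{Per} give you only $u^+\ge \alpha\langle x-x_0,\nu\rangle^+ + o(|x-x_0|)$ and $u^-\le \beta\langle x-x_0,\nu\rangle^- + o(|x-x_0|)$. When $\langle x-x_0,\nu\rangle$ is itself $O(|x-x_0|^2)$, the $o(|x-x_0|)$ error dominates everything, and no amount of curvature tuning of a paraboloid (or any surface with polynomial contact) can absorb a bare $o(|x-x_0|)$ term. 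The same failure occurs, and is in fact worse, on the $u^-$ side in $B^c$, where you have no touching ball to fall back on.

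The paper avoids this difficulty by a different construction. After a smooth change of variables flattening $\partial B$ to $\{x_1=0\}$, it takes as test functions $h_k=\tilde\alpha_k\,w$ on $\{x_1>0\}$ and $-\tilde\beta_k\,v$ on $\{x_1<0\}$, where $w,v$ are (normalized) $\mathcal L$-harmonic measures in the half-balls. These are $C^{1,\bar\gamma}$ up to the flat interface $\{x_1=0\}$, so $h_k$ is an admissible comparison function in Definition~\ref{defnhsol}, with $F(h_k)$ flat. Crucially, the full-neighborhood inequalities $u\ge\tilde\alpha_k w$ in $B_{1/k}^+$ and $u^-\le\tilde\beta_k v$ in $B_{1/k}^-$, with $\tilde\alpha_k\to\alpha$ and $\tilde\beta_k\to\beta$, are \emph{not} consequences of the non-tangential expansions alone: they require the boundary comparison lemmas (Lemmas~2.2--2.3 in \cite{DFS5}), which use boundary Harnack-type arguments to upgrade non-tangential information to uniform control. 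That is precisely the missing ingredient in your approach. If you import those lemmas, you could in principle make a paraboloidal barrier work on the $u^+$ side, but the $u^-$ side in $B^c$ would still need the harmonic-measure comparison, so nothing is gained over the paper's route.
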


\begin{proof}
Let $u$ be a weak solution as in Definition \ref{Per}. Assume $%
v\in C^{1,{\bar{\gamma}}}(\overline{B^{+}(v)})\cap C^{1,\bar{\gamma}}(\overline{B^{-}(v)}%
),$ with $F(v)\in C^{2},$ touches $u$ from below at $x_{0}\in F(v).$ Then we
need to show 
\begin{equation*}
|\nabla v^{+}(x_{0})|\leq G(|\nabla v^{-}(x_{0})|,\nu _{0},x_{0}).
\end{equation*}%
Indeed, $v\leq u$ in a neighborhood of $x_{0}$ and $x_{0}$ is regular from
the right. Thus 
\begin{equation*}
u^{+}\geq |\nabla v^{+}(x_{0})|\langle \nu (x_{0}),x-x_{0}\rangle
^{+}+o(|x-x_{0}|),
\end{equation*}%
\begin{equation*}
u^{-}\leq |\nabla v^{-}(x_{0})|\langle \nu (x_{0}),x-x_{0}\rangle
^{-}+o(|x-x_{0}|),
\end{equation*}%
and by part (ii)-(b) in Definition \ref{Per}, we obtain the desired
inequality. A similar argument works for test functions touching $u$ from
above.

Assume now that $u$ is a viscosity solution in the sense of Definition \ref%
{defnhsol} (we consider for simplicity only the case (ii)-(a)). 
If $x_{0}\in F(u)$ is regular from the right with touching ball $B,$ then,
by Lemmas 2.2 and 2.3 in \cite{DFS5}, we can write 
\begin{equation*}
u^{+}\geq \alpha \langle x-x_{0},\nu (x_{0})\rangle ^{+}+o(|x-x_{0}|),\quad
\alpha \geq 0,
\end{equation*}
with non-tangential equality in $B,$ and 
\begin{equation*}
u^{-}\leq \beta \langle x-x_{0},\nu (x_{0})\rangle ^{-}+o(|x-x_{0}|),\quad
\alpha \geq 0.
\end{equation*}
with non-tangential equality in $B^{c}.$

We show that $\alpha \leq G(\beta ,\nu (x_{0}),x_{0}).$ Assume by
contradiction that 
\begin{equation*}
\alpha >G(\beta ,\nu (x_{0}),x_{0}).
\end{equation*}%
After a smooth change of variables that flattens the surface ball we may
assume that $B=B_{2}$, $B_{2}^{+}\subset \Omega ,$ with $x_{0}=0\in \partial
\Omega \cap \partial B_{2}$ and $\nu (0)=e_{1}$. We keep the same notation $u
$ and $\mathcal{L}$ for the transformed $u$ and the new operator, which is
uniformly elliptic with ellipticity constant of the same order of $\lambda
,\Lambda $. Let $w\left( x\right) =\varphi \left( x\right) /\varphi
_{x_1}\left( 0\right) $ where $\varphi \left( x\right) $ is the $\mathcal{L}-$%
harmonic measure in $B_{2}^{+}$ of $S_{2}^{+}=\partial B_{2}\cap \left\{
y_{1}>0\right\} $. For $k\geq 1,$ define 
\begin{equation*}
\alpha _{k}=\sup \left\{ \tilde{\alpha}:u\left( x\right) \geq \tilde{\alpha}%
w\left( x\right) \text{ \ \ \ for every }x\in B_{1/k}^{+}\right\} .
\end{equation*}%
Then $\left\{ \alpha _{k}\right\} _{k\in \mathbb{N}}$ is nondecreasing and
(see Lemma 2.3 in \cite{DFS5}), $\alpha _{k}\rightarrow \alpha \geq 0.$
Moreover 
\begin{equation}
u\left( x\right) =\alpha w\left( x\right) +o\left( \left\vert x\right\vert
\right) \text{ \ \ as }x\rightarrow 0\text{, }x\in B_{1}^{+}.  \label{asymm}
\end{equation}%
Thus for each $k$ sufficiently large, there exists a $C^{1,\bar \gamma}\left( 
\overline{B_{1/k}^{+}}\right) $ function $\tilde{\alpha}_{k}w\left( x\right)
,$ that vanishes on $x_{1}=0$ and touches from below $u$ in $B_{1/k}^{+},$
at $0$, with $\tilde{\alpha}_{k}\rightarrow \alpha .$

Let $v\left( x\right) =\psi \left( x\right) /\psi _{x_1}\left( 0\right) ,$
where $\psi $ is the $\mathcal{L-}$ harmonic measure in $B_{2}^{-}$ of $%
\partial B_{2}\cap \left\{ y_{1}<0\right\} $. Define, for $k\geq 1$, 
\begin{equation*}
\beta _{k}=\inf \left\{ \tilde{\beta}:u^{-}\left( x\right) \leq \tilde{\beta}%
v\left( x\right) \text{ \ \ \ for every }x\in B_{1/k}^{-}\right\} .
\end{equation*}%
Then $\left\{ \beta _{k}\right\} _{k\in \mathbb{N}}$ is nonincreasing and
(see Lemma 2.2 in \cite{DFS5}), $\beta _{k}\rightarrow \beta \geq 0.$ Thus,
for each $k$ sufficiently large, there exists a $C^{1,\bar\gamma}\left( 
\overline{B_{1/k}^{-}}\right) $function $\tilde{\beta}_{k}v\left( x\right) ,$
that vanishes on $x=0$ and touches from above $u^{-}$ in $B_{1/k}^{-},$ at $0
$, with $\tilde{\beta}_{k}\rightarrow \beta .$ As a consequence, the
functions 
\begin{equation*}
h_{k}=\left\{ 
\begin{array}{l}
\tilde{\alpha _{k}}w,\quad x\in B_{1/k}^{+} \\ 
-\tilde{\beta}_{k}v,\quad x\in B_{1/k}^{-}%
\end{array}%
\right. 
\end{equation*}%
touch from below $u$ at $0$ in $B_{1/k}$ and therefore we must have $\tilde{%
\alpha _{k}}\leq G(\tilde{\beta}_{k},e_{1},0).$ Since $\tilde{\alpha _{k}}%
\rightarrow \alpha $ and $\tilde{\beta}_{k}\rightarrow \beta ,$ we obtain a
contradiction to $\alpha >G(\beta ,\nu _{0},x_{0}).$
\end{proof}

Finally, to obtain Theorem \ref{precompactenss_th} we need a compactness
result, which is available already for viscosity solutions. We state here a
compactness theorem specific for Perron's solutions, as it is interesting in
its own.

\begin{thm}
\emph{(Compactness).} Let $u_{k}$ be a sequence of Perron solutions to%
\begin{equation*}
\begin{array}{ll}
\mathcal{L}_{k}u_{k}=\textrm{div}\left( A_{k}\left( x\right) \nabla u\right)
=f_{1,k}\text{ } & \text{ \ in }\Omega ^{+}\left( u_{k}\right) \\ 
\mathcal{L}_{k}u_{k}=\text{div}\left( A_{k}\left( x\right) \nabla u\right)
=f_{2,k}\chi _{\left\{ u_{k}<0\right\} }\text{ } & \text{ \ in }\Omega
^{-}\left( u_{k}\right) \\ 
(u_{k}^{+})_{\nu }=G_{k}(\left( u_{k}^{-}\right) _{\nu },\nu_0,x_0) & \text{
\ on }F\left( u_{k}\right)%
\end{array}%
\end{equation*}%
with $\underline{u}_k$ a sequence of minorants s.t. 
\begin{equation*}
\underline{u}_{k}\leq u_{k}.
\end{equation*}
Assume that $A_{k}\rightarrow A$, $G_k\rightarrow G,f_{i,k}\rightarrow f_i,
i=1,2,$ and \underline{$u$}$_{k}\rightarrow \underline{u}$ uniformly and
that the assumptions on $A_k, G_k$,$f_{i,k}, i=1,2,$ and $\underline{u}_k$
are satisfied uniformly.

\noindent Then, if $u_{k}\rightarrow u$ uniformly in $D$, $u$ is a weak
solution of the limiting free boundary problem in $D$.
\end{thm}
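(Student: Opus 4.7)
To establish that $u$ is a weak solution, I will verify the two conditions of Definition \ref{Per} separately. The first condition --- that $u$ weakly satisfies $\mathcal{L}u = f_i$ in each phase --- is routine: on any compact $K \subset\subset \Omega^+(u) \cup \Omega^-(u)$, uniform convergence $u_k \to u$ forces $K \subset \Omega^\pm(u_k)$ for all large $k$, and uniform interior estimates for divergence-form equations (available because $\|A_k\|_{C^{0,\bar\gamma}}$ and $\|f_{i,k}\|_\infty$ are uniformly bounded) allow passage to the limit in the weak formulation $\int \langle A_k \nabla u_k, \nabla \varphi\rangle = -\int f_{i,k} \varphi$ against $\varphi \in C_0^1(K)$.

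For the free boundary condition, I will use the equivalence Lemma just proved and show instead that $u$ is a viscosity solution in the sense of Definition \ref{defnhsol}. So let $v$, a smooth two-phase test function with $F(v) \in C^2$, touch $u$ by below at $x_0 \in F(v)$. By a standard perturbation $v \rightsquigarrow v - \eta|x - x_0|^2$ we may assume the touching is strict and that $v$ is a strict classical subsolution of $\mathcal{L}_k w = f_{i,k}$ in each phase, uniformly in $k$ (using $A_k \to A$ and $f_{i,k} \to f_i$). The uniform convergence $u_k \to u$ produces small constants $c_k \to 0$ and points $x_k \to x_0$ such that $v_k := v - c_k$ touches $u_k$ by below at $x_k$. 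The strict subsolution property and the strong maximum principle then force $x_k \in F(u_k) \cap F(v_k)$. The viscosity inequality for $u_k$ at $x_k$ reads
\begin{equation*}
|\nabla_{A_k} v_k^+(x_k)|^2 - |\nabla_{A_k} v_k^-(x_k)|^2 \leq G_k^2\bigl(|\nabla_{A_k} v_k^-(x_k)|, \nu_k(x_k), x_k\bigr),
\end{equation*}
and passage to the limit using $A_k \to A$ in $C^{0,\bar\gamma}$, $G_k \to G$, and the smoothness of $v$ yields the corresponding inequality for $v$ at $x_0$. Touching from above is handled symmetrically, which via the equivalence Lemma gives the weak solution property.

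The main obstacle will be guaranteeing that the touching points $x_k$ actually lie on $F(u_k)$ and that $F(u_k) \to F(u)$ near $x_0$ in the Hausdorff sense. If the positive or negative phase of $u_k$ were to collapse, the touching could occur far from $F(u_k)$ or the limit point $x_0$ might fail to correspond to a genuine free-boundary point. This is exactly where the minorant hypothesis $\underline{u}_k \leq u_k$ enters: it prevents the positive phase of $u_k$ from degenerating in the limit, since $\underline{u} \leq u$ inherits a non-trivial positive set from the uniform assumptions on $\underline{u}_k$. Combined with the uniform non-degeneracy Lemma \ref{deltand} (applied to $u_k$ with constants depending only on the uniform data bounds and on $\mathrm{Lip}(u_k)$, which is uniformly controlled), this ensures the required local Hausdorff convergence of $F(u_k)$ and closes the argument.
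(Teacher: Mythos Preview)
Your approach is genuinely different from the paper's. The paper does not argue directly at all: it simply says the proof follows Lemmas~6.3 and~7.1 of \cite{DFS5}, with the Perron solutions $u_k$ playing the role of the approximating supersolutions $w_k$ there. In other words, the paper works entirely within the asymptotic-development framework of Definition~\ref{Per}, passing the one-sided expansions and inequalities to the limit using the Perron machinery already built in \cite{DFS5}. You instead invoke the equivalence Lemma to transfer the problem to Definition~\ref{defnhsol} and then run the standard compactness argument for viscosity solutions with test functions. This is a legitimate and arguably more transparent route once the equivalence is in hand; it avoids reopening the Perron construction.

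Two technical points deserve more care. First, the perturbations $v\mapsto v-\eta|x-x_0|^2$ and $v_k:=v-c_k$ do \emph{not} preserve the two-phase structure required of test functions in Definition~\ref{defnhsol}: subtracting a constant moves the zero set of $v$ off $F(v)$ and destroys the $C^{1,\bar\gamma}$-up-to-$F(v)$ hypothesis. The correct device, used repeatedly in the paper (see the proof of Lemma~\ref{improv1}), is to \emph{translate} $v$ in the normal direction to $F(v)$; then $F(v_k)$ remains $C^2$ and the structure is intact. With that fix, the conclusion $x_k\in F(u_k)$ is in fact automatic: near $x_k\in F(v_k)$ there are points where $v_k>0$, hence $u_k>0$, so $x_k\in\partial\{u_k>0\}$. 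You therefore do not need Hausdorff convergence of free boundaries for this step. Second, your appeal to Lemma~\ref{deltand} is misplaced: that lemma carries a flatness hypothesis which is not available here. The uniform non-degeneracy of Perron solutions you want is proved in \cite{DFS5} directly from the minorant, and it is really needed for the \emph{interior} step---to rule out that a compact $K\subset\Omega^-(u)$ meets $\Omega^+(u_k)$ for large $k$---rather than for locating the touching points on $F(u_k)$.
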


\begin{proof}
The proof follows as in Lemma 6.3 and 7.1 in \cite{DFS5}, with the sequence $%
u_{k}$ playing the role of the sequence of supersolutions $w_{k}$ there.
\end{proof}

\begin{proof}[Proof of Theorem $\protect\ref{precompactenss_th}$]
We may suppose that $x_{0}=0$ and that $\nu =e_{n}$ is the
measure-theoretical normal to $F^{\ast }\left( u\right) $ at $0$. Rescale by
setting $u_{r}\left( x\right) =\frac{1}{r}u\left( rx\right) $. Since $\Omega
^{+}\left( u_{r}\right) =\Omega _{r}^{+}\left( u\right) \equiv \left\{
x:rx\in \Omega ^{+}\left( u\right) \right\} ,$ it follows that $\Omega
^{+}\left( u_{r}\right) $ and $\Omega ^{-}\left( u_{r}\right) $ converge
locally in measure to the half spaces $x_{n}>0$ and $x_{n}<0$, respectively.

Moreover, from the uniform positive density of $\Omega ^{+}\left( u\right) $
along $F\left( u\right) ,$ given $\varepsilon >0$ and a ball $B$ centered at
the origin, for $r\leq r_{0}\left( \varepsilon ,B\right) $, small, we infer%
\begin{equation}
\Omega ^{+}\left( u_{r}\right) \cap B\subset \left\{ x_{n}>-\varepsilon
\right\} \cap B\text{ }  \label{ops}
\end{equation}
and therefore also 
\begin{equation}
\Omega ^{-}\left( u_{r}\right) \cap B\supset \left\{ x_{n}<-\varepsilon
\right\} \cap B.  \label{opss}
\end{equation}%
Now, from the equilipschitz continuity of $u_{r}$, we can extract a
subsequence $u_{j}=u_{r_{j}}$ uniformly convergent to $U$ in every
compact subset of $\mathbb{R}^{n}$. From the compactness theorem, $U$ is
a global solution of a two phase problem for the Laplace operator, with free
boundary condition $U_{x_{n}}^{+2}-U_{x_{n}}^{-2}=1$. The above argument and the Lipschitz
continuity of $U$, implies that $U$ must be a two-plane solution:%
\begin{equation*}
U\left( x\right) =\alpha x_{n}^{+}-\beta x_{n}^{-}.
\end{equation*}%
Thus, ($j$ large) 
\begin{equation*}
\left\vert u_{j}-U\right\vert <\varepsilon ,
\end{equation*}%
and using the nondegeneracy of $u$ and (\ref{ops}),(\ref{opss}), it also
follows that $F\left( u_{j}\right) $ is contained in a strip $\left\vert
x_{n}\right\vert \leq c\varepsilon ,$ $c$ universal.

Then, the regularity Theorem \ref{flatmain2} implies that $F\left( u\right) $
is a $C^{1,\gamma}$ surface, near the origin. In particular, $\alpha $ and $%
\beta $ must be independent of the selected subsequence.
\end{proof}

\section{Application to the Prandtl-Batchelor model}

In this section we apply our results to the classical Prantl-Batchelor model
in hydrodynamics, proposed by Batchelor back in 1956 \cite{B1}. We restate
it in our notation the following way.

In a bounded 2d domain $\Omega $ a constant vorticity flow is immersed in an
irrotational flow. On the unknown interface between the two flows, the jump
of the squares of the exterior and the interior speeds is a prescribed
constant. Thus given two constants $\mu >0,\omega >0,$ one looks for a
function $u$, with $u=\mu$ on $\partial \Omega,$ satisfying 
\begin{equation}  \label{eqs}
\begin{cases}
\Delta u=0\text{ \ in }\Omega ^{+}\left( u\right), \quad \Delta u=\omega 
\text{ \ in }\Omega ^{-}\left( u\right) \\ 
\  \\ 
\left\vert \nabla u^{+}\right\vert ^{2}-\left\vert \nabla u^{-}\right\vert
^{2}=\sigma \quad \text{on $F\left( u\right) =\Omega \cap \p\Omega ^{+}\left(
u\right) $}%
\end{cases}%
\end{equation}
where $\sigma >0$.

When $\Omega $ is convex, Acker \cite{A}, using a variational method, gives
sufficient conditions for the existence of a solution $U$ with \emph{convex}
free boundary, such as $\Omega$ is large enough or there exists a classical
``inner" solution (i.e., a supersolution in our setting). These solutions
are classical in the sense that $U^{+}$ and $U^{-}$ are $C^{1}$ up to the
free boundary and the condition across is satisfied in the pointwise sense.
Actually, the results of Acker hold in any dimension $n\geq 2,$ with $\sigma
=\sigma \left( x\right) $ continuous and bounded. From Theorem \ref{Lipmainvar}, we deduce that 
$F\left( U\right)$ is locally a $C^{1,\gamma}$ graph.

When $\Omega $ is not convex, the theory is largely incomplete (see \cite{EM}%
). In the context of viscosity solutions it is known that solutions are
Lipschitz, as shown in [CJK], but neither existence nor regularity is known.
What we can prove is the following. Assume that $f$ is a conformal map of $%
\Omega $ onto the unit disk $B_{1}$ centered at the origin, with $0<m\leq
\left\vert f^{\prime }\right\vert \leq M$. Call $\hat{u}=u\circ f$. Problem (%
\ref{eqs}) transforms into%
\begin{equation}  \label{eqs1}
\begin{cases}
\Delta \hat{u}=0\text{ \ in }B_{1}^{+}\left( \hat{u}\right) \text{, }\Delta 
\hat{u}=\left\vert f^{\prime }\right\vert ^{2}\omega \text{ \ in }%
B_{1}^{-}\left( \hat{u}\right) \\ 
\  \\ 
\left\vert \nabla \hat{u}^{+}\right\vert ^{2}-\left\vert \nabla \hat{u}%
^{-}\right\vert ^{2}=\left\vert f^{\prime }\right\vert ^{2}\sigma \quad 
\text{on $F\left( \hat{u}\right) .$}%
\end{cases}%
\end{equation}%
Clearly, if $u$ is a viscosity solution of (\ref{eqs}) then $\hat{u}$ is a
viscosity solution of (\ref{eqs1}).

Let us look for radial solutions $U_{h}$ of problem (\ref{eqs1}), with $%
h^{2}\omega $ and $h^{2}\sigma $ replacing $\left\vert f^{\prime
}\right\vert ^{2}\omega $ and $\left\vert f^{\prime }\right\vert ^{2}\sigma $
respectively, where $h>0$. It turns out that, if $z_{0}=z_{0}\left( \mu
,\omega,h\right) $ is the minimum of the function%
\begin{equation*}
f\left( \rho ;h,\mu \right) =\rho ^{-2}\left( \log \rho \right)
^{-2}\mu ^{2}-\frac{1}{4}h^{2}\omega \rho ^{2}
\end{equation*}%
over the interval $\left( 0,1\right),$ and the condition 
\begin{equation}
h^{2}\sigma >\max \left\{ 0,z_{0}\right\}  \label{cond}
\end{equation}
holds, then it is easy to check that there exist two radial solutions $%
U_{h,1}$ and $U_{h,2}$, whose free boundary is given by circles $C_{\rho
_{1}}$ and $C_{\rho _{2}},$ where $\rho _{1}\left( h,\omega ,\mu ,\sigma
\right) <\rho _{2}\left( h,\omega ,\mu ,\sigma \right) $ are the roots (in
the interval $\left( 0,1\right) $) of the equation%
\begin{equation*}
f\left( \rho ;h,\mu \right) =h^{2}\sigma .
\end{equation*}%
In particular, (\ref{cond}) holds for every $h$ and $\sigma >0$, if $%
h^{2}\omega \geq 4e\mu $.

\emph{\ }Observe now that, under the condition (\ref{cond}), the radial
solutions $U_{m,1}$ and $U_{m,2}$ are classical supersolutions of problem (%
\ref{eqs1}), while $U_{M,1},U_{M,1}$ are classical subsolutions. Since, in
particular 
\begin{equation*}
\rho _{2}\left( M,\omega ,\mu ,\sigma \right) >\rho _{2}\left( m,\omega ,\mu
,\sigma \right) >\rho _{1}\left( m,\omega ,\mu ,\sigma \right)
\end{equation*}%
it follows that $U_{M_{2}}<U_{m,1}<$ $U_{m,2}$ in $B_1$. From Theorem 1.3 in 
\cite{DFS5} and Theorem \ref{precompactenss_th} we deduce the following
result.

\begin{thm}
Assume that 
\begin{equation}
m^{2}\sigma >\max \left\{ 0,z_{0}\left( \mu ,\omega, m\right)
\right\} .  \label{con}
\end{equation}
Then there exists a Perron solution $u$ of the Prandtl-Batchelor problem %
\eqref{eqs}. In particular, the free boundary $F\left( u\right)$ has $H^{1}$
finite measure and in a neighborhood of any point of the reduced boundary, $%
F^*\left( u\right) $ is a $C^{1,\gamma}$ curve.
\end{thm}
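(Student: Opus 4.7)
The plan is to transfer the problem to $B_1$ via the conformal map $f$, run the Perron construction of \cite{DFS5} between explicit radial barriers, and then invoke Theorem~\ref{precompactenss_th} to conclude $C^{1,\gamma}$ regularity at reduced free boundary points. Before starting, observe that condition~(\ref{con}) in fact guarantees the existence of all four radial solutions $U_{m,1}, U_{m,2}, U_{M,1}, U_{M,2}$: a direct inspection of $f(\rho;h,\mu) = \rho^{-2}(\log\rho)^{-2}\mu^{2} - \tfrac{1}{4}h^{2}\omega\rho^{2}$ shows that $z_{0}(\mu,\omega,h)$ is non-increasing in $h$, so the inequality at $h=m$ forces the inequality at $h=M$, and (\ref{cond}) holds for every $h\in[m,M]$.

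With existence in hand, I would record the sub/super-solution properties in the transformed problem~(\ref{eqs1}). The function $U_{m,1}$ is a classical strict supersolution of~(\ref{eqs1}): since $m\leq |f'|\leq M$ one has $\Delta U_{m,1}=m^{2}\omega \leq |f'|^{2}\omega$ in $B_{1}^{-}(U_{m,1})$ and $|\nabla U_{m,1}^{+}|^{2}-|\nabla U_{m,1}^{-}|^{2}=m^{2}\sigma \leq |f'|^{2}\sigma$ on $F(U_{m,1})$; analogously $U_{M,2}$ is a classical strict subsolution. By the ordering recorded above the statement, $U_{M,2} < U_{m,1}$ in $B_{1}$, and both assume the Dirichlet datum $\mu$ on $\partial B_{1}$.

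I can now apply Theorem~1.3 of \cite{DFS5} with minorant $\underline u = U_{M,2}$ and majorant $U_{m,1}$, and Dirichlet datum $\mu$. This produces a minimal Perron weak solution $\hat u$ of~(\ref{eqs1}) in the sense of Definition~\ref{Per}, squeezed between the two barriers, which is Lipschitz with non-degenerate positive part and whose free boundary satisfies $\mathcal{H}^{1}(F(\hat u)\cap K)<\infty$ for every $K\subset\subset B_{1}$, together with $\mathcal{H}^{1}(F(\hat u)\setminus F^{\ast}(\hat u))=0$. Theorem~\ref{precompactenss_th} then gives that $F(\hat u)$ is a $C^{1,\gamma}$ curve in a neighborhood of every point of $F^{\ast}(\hat u)$. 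Pulling back by the smooth conformal diffeomorphism, which is bi-Lipschitz because $m\leq|f'|\leq M$, yields the desired Perron solution $u$ of~(\ref{eqs}) on $\Omega$ with all the claimed properties, since conformality preserves both the notion of reduced boundary and the $C^{1,\gamma}$ regularity of curves.

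The main technical point is checking that the radial barriers genuinely qualify as an admissible minorant/majorant pair for the Perron method as formulated in \cite{DFS5} --- that is, that they satisfy the strict classical sub/supersolution inequalities both in the bulk and on their smooth free boundaries, that they lie on the correct side of the prescribed Dirichlet datum, and that all the structural assumptions of the Perron existence theorem (non-degeneracy of positive part, finite perimeter structure of the free boundary) are inherited by $\hat u$. All the remaining work is bookkeeping under the conformal change of variables.
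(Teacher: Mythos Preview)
Your proposal is correct and follows the same route as the paper: transform to $B_1$ by the conformal map, use the radial solutions $U_{M,2}<U_{m,1}$ as sub/supersolution barriers for the Perron construction of \cite{DFS5}, and then apply Theorem~\ref{precompactenss_th} for the $C^{1,\gamma}$ regularity of $F^{\ast}$. Your added verification that condition~(\ref{con}) at $h=m$ forces (\ref{cond}) for all $h\in[m,M]$ via monotonicity of $z_{0}$ in $h$, and your explicit pull-back step, are useful details the paper leaves implicit; the only small slip is labeling the barriers ``strict'' while writing non-strict inequalities, which is harmless since the Perron framework in \cite{DFS5} does not require strictness of the classical barriers.
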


\end{document}